\newcommand{\R}{\mathbb{R}}
\newcommand{\Hy}{\mathbb{H}}
\newcommand{\lp}{\left (}
\newcommand{\rp}{\right )}
\def\Xint#1{\mathchoice
{\XXint\displaystyle\textstyle{#1}}%
{\XXint\textstyle\scriptstyle{#1}}%
{\XXint\scriptstyle\scriptscriptstyle{#1}}%
{\XXint\scriptscriptstyle\scriptscriptstyle{#1}}%
\!\int}
\def\XXint#1#2#3{{\setbox0=\hbox{$#1{#2#3}{\int}$ }
\vcenter{\hbox{$#2#3$ }}\kern-.6\wd0}}
\def\dashint{\Xint-}
\author{Brian Allen}
\date{Spring 2019}
\title{Sobolev stability of the PMT and RPI using IMCF}
\newtheorem{Thm}{Theorem}[section]
\newtheorem{Cor}[Thm]{Corollary}
\newtheorem{Prop}[Thm]{Proposition}
\newtheorem{rmrk}[Thm]{Remark}
\newtheorem{Lem}[Thm]{Lemma}
\newtheorem{Def}[Thm]{Definition}
\begin{document}

\maketitle

\begin{abstract}
We study the Sobolev stability of the Positive Mass Theorem (PMT) and the Riemannian Penrose Inequality (RPI) in the case where a region of a sequence of manifolds $M^3_i$ can be foliated by a smooth solution of Inverse Mean Curvature Flow (IMCF) which is uniformly controlled for time $t \in [0,T]$. 

In particular, we consider a sequence of regions of manifolds $U_T^i\subset M_i^3$, foliated by a IMCF, $\Sigma_t$, such that if $\partial U_T^i = \Sigma_0^i \cup \Sigma_T^i$ and $m_H(\Sigma_T^i) \rightarrow 0$ then $U_T^i$ converges in $W^{1,2}$ to a flat annulus or in the hyperbolic setting it converges to a annulus portion of hyperbolic space. If instead $m_H(\Sigma_T^i)-m_H(\Sigma_0^i) \rightarrow 0$ and $m_H(\Sigma_T^i) \rightarrow m >0$ then we show that $U_T^i$ converges in $W^{1,2}$ to a topological annulus portion of the Schwarzschild metric  or in the Hyperbolic case to a topological annulus portion of the Anti-de~Sitter Schwarzschild metric.
\end{abstract}

\section{Introduction}\label{sec:intro}
If we consider a complete, asymptotically flat manifold with nonnegative scalar curvature then the Positive Mass Theorem (PMT) says that $M^3$ has positive ADM mass. This was proved by Schoen-Yau \cite{SY} using minimal surface techniques. The rigidity statement says that if $m_{ADM}(M) = 0$ then $M$ is isometric to Euclidean space. Similarly, the Riemannian Penrose Inequality (RPI) says that if $\partial M$ consists of an  outermost minimal surface $\Sigma_0$ then
\begin{align}
 m_{ADM}(M) \ge \sqrt{\frac{|\Sigma_0|}{16 \pi}} \label{RPI Intro}
\end{align}
where $|\Sigma_0|$ is the area of $\Sigma_0$. In the case of equality, i.e. $m_{ADM}(M) = \sqrt{\frac{|\Sigma_0|}{16 \pi}}$, then $M$ is isometric to the Schwarzschild metric \eqref{SchwarzMetricDef}. This was first proved by Geroch \cite{G} in the rotationally symmetric case using Inverse Mean Curvature Flow (IMCF) and the Geroch monotonicity of the Hawking mass
\begin{align}
m_H(\Sigma) = \sqrt{\frac{|\Sigma|}{(16\pi)^3}} \left (16 \pi - \int_{\Sigma} H^2 d \mu \right ).\label{HawkingMass Intro}
\end{align}
Huisken-Ilmanen \cite{HI} then extended these ideas to general asymptotically flat manifolds with a connected horizon using novel weak solutions to IMCF. Soon after Bray \cite{B} proved the general case of the RPI using a conformal flow method. 

In the asymptotically hyperbolic case the notion of mass was defined mathematically and explored by Chru\'{s}ciel-Herszlich \cite{CH} and Wang \cite{W}. Earlier explorations of mass in this context were carried out by Abbott-Deser \cite{AD}, Ashtekar-Magnon \cite{AM}, and Gibbons-Hawking-Horowitz-Perry \cite{GHHP}.  The PMT in this context for manifolds with scalar curvature greater than or equal to $-6$ has been proved by Wang \cite{W}, Chru\'{s}ciel-Herszlich \cite{CH}, Andersson-Cai-Galloway \cite{ACG}, and Sakovich  in various different cases. The notion of Hawking mass in this context is defined as
\begin{align}
m_H^{\Hy}(\Sigma) = \sqrt{\frac{|\Sigma|}{(16\pi)^3}} \left (16 \pi - \int_{\Sigma} H^2-4 d \mu \right ).\label{HypHawkingMass Intro}
\end{align}
 The RPI conjecture in the case of asymptotically hyperbolic manifolds satisfying the scalar curvature bound says that the appropriate mass for this context satisfies \eqref{RPI Intro}. In the case of equality the manifold is isometric to the Anti-de~Sitter Scharzschild metric \eqref{AntiMetricDef}. Neves \cite{N} observed that the method of using IMCF to prove the RPI in the asymptotically hyperbolic case is not sufficient. Later, Hung and Wang \cite{HW} discuss this issue further in a note on IMCF in hyperbolic space. This conjecture is still open but special cases and related estimates have been obtained by Dahl-Gicquad-Sakovich \cite{DGS1}, de~Lima-Gir\~{a}o \cite{dLG}, and Brendle-Hung-Wang \cite{BHW}.

In this paper we are concerned with the stability of these four rigidity statements. Lee and Sormani \cite{LS1} show that one cannot obtain smooth stability of the PMT even in the asymptotically flat, spherically symmetric setting. In that setting they prove Sormani-Wenger intrinsic flat (SWIF) convergence stability using Geroch monotonicity. LeFloch and Sormani \cite{LeS} prove Sobolev stability using Geroch monotonicity but only in the asymptotically flat, sphereically symmetric setting. Additional related work will be mentioned below. The main goal of this paper is to improve upon the author's previous results on $L^2$ stability \cite{BA2,BA3} in order to show $W^{1,2}$ stability. 

In \cite{HI}, Huisken-Ilmanen show how to use weak solutions of IMCF in order to prove the PMT for asymptotically flat Riemanian manifolds as well as the RPI in the case of a connected boundary. The weak solutions defined  by Huisken-Ilmanen jump over gravity wells and hence do not produce a complete foliation of the ambient manifold. This is not a problem for Huisken-Ilmanen since they are able to show that the Geroch monotonicity of the Hawking mass is preserved through these jumps. For our purposes, we need the IMCF to foliate the ambient manifold and hence we focus on regions of manifolds which can be foliated by smooth solutions of IMCF which are uniformly controlled.  For a glimpse of long time existence and asymptotic analysis results for smooth IMCF in various ambient manifolds see the work of the author \cite{BA4'}, Ding \cite{QD}, Gerhardt \cite{CG1, CG2},  Scheuer \cite{S1,S2}, Urbas \cite{U}, and Zhou \cite{Z}.

\begin{Def} \label{IMCFClass} If we have $\Sigma^2$ a surface in a Riemannian manifold, $M^3$, we will denote the induced metric, mean curvature, second fundamental form, principal curvatures, Gauss curvature, area, Hawking mass and Neummann isoperimetric constant as $g$, $H$, $A$, $\lambda_i$, $K$, $|\Sigma|$, $m_H(\Sigma)$, $IN_1(\Sigma)$, respectively. We will denote the Riemann curvature, Ricci curvature, scalar curvature, sectional curvature tangent to $\Sigma$, and ADM mass as $Rm$, $Rc$, $R$, $K_{12}$, $m_{ADM}(M)$, respectively.

Define the two classes of manifolds with boundary foliated by IMCF as follows

\begin{align*}
\mathcal{M}_{\Hy,r_0,H_0,I_0}^{T,H_1,A_1}:=\{& M \text{ a Riemannian manifold, } U_T \subset M, R \ge -6|
\\ &\exists \Sigma \subset M \text{compact, connected surface, } 
\\& IN_1(\Sigma) \ge I_0, m_H^{\Hy}(\Sigma) \ge 0 \text{,and } |\Sigma|=4\pi r_0^2. 
\\ &\exists \Sigma_t \text{ smooth solution to IMCF, such that }\Sigma_0=\Sigma,
\\& H_0 \le H(x,t) \le H_1 < \infty, \|A\|_{W^{2,2}(\Sigma\times [0,T])} \le A_1,
\\&\text{and } U_T = \{x\in\Sigma_t: t \in [0,T]\} \}
\end{align*}
and
\begin{align*}
\mathcal{M}_{r_0,H_0,I_0}^{T,H_1,A_1}:=\{& U_T \in \mathcal{M}_{\Hy,r_0,H_0,I_0}^{T,H_1,A_1}| R \ge 0, m_H(\Sigma) \ge 0 \}
\end{align*}
where $0 < H_0 < H_1 < \infty$, $0 < I_0,A_1,A_2, r_0 < \infty$ and $0 < T < \infty$.
\end{Def}

\begin{rmrk}
All norms in this paper are defined on $\Sigma \times [0,T]$ with respect to the Euclidean metric $\delta$ which is given in IMCF coordinates below. The diffeomorphism we use to impose coordinates on $U_T$ is discussed in subsection \ref{subsec: Old Estimates} before Proposition \ref{avgH}.
\end{rmrk}

\begin{rmrk}
The reader should make note that the difference between the class of IMCF's considered in this paper, as opposed to the author's previous paper on $L^2$ stability \cite{BA2, BA3}, is the addition of a $W^{2,2}$ bound on $A$ which is asking for some uniform higher regularity of the family of IMCF's. Notice by Morrey's inequality this implies a $C^{0,\alpha}$ bound on $|A|$ as in the previous paper \cite{BA2,BA3}.
\end{rmrk}

Observe that the Riemannian metric, $\hat{g}_i$, on these manifolds can now be expressed using a gauge defined on $\Sigma \times [0,T]$ by IMCF as,
\begin{align}
 \hat{g}^i&=\frac{1}{H(x,t)^2}dt^2 + g^i(x,t),\label{AmbientMetricDef}
\end{align}
where $g^i(x,t)$ is the metric on $\Sigma_t^i$.
On Euclidean space, concentric spheres flow according to IMCF and so the Euclidean metric $\delta$ can be expressed using a gauge defined on $\Sigma \times [0,T]$ by IMCF as,
\begin{align}
\delta &= \frac{r_0^2}{4}dt^2 + r_0^2e^t \sigma, \label{deltaMetricDef}
\end{align}
where $\sigma$ is the round metric on $\Sigma$.

Similarly, one can express the Schwarzschild, hyperbolic, and Anti-de~Sitter Schwarzschild metrics
\begin{align}
g_S &= \frac{r_0^2}{4}\left (1 - \frac{2}{r_0} m e^{-t/2} \right )^{-1} e^tdt^2 + r_0^2e^t \sigma \label{SchwarzMetricDef}
\\g_{\Hy} &= \frac{1}{4}\left(1+ \frac{e^{-t}}{r_0^2} \right)^{-1}dt^2 + r_0^2e^t \sigma,\label{HypMetricDef}
\\ g_{AdSS} &= \frac{1}{4}\left (1+\frac{e^{-t}}{r_0^2} - \frac{2}{r_0^3} m e^{-3t/2} \right )^{-1} dt^2 + r_0^2e^t \sigma.\label{AntiMetricDef}
\end{align}

In our first theorem we prove stability of the PMT: that when the Hawking mass of the outer boundary converges to $0$ the regions converge to annular regions in Euclidean space. Prior work in this direction under a variety of hypotheses was conducted by Bray-Finster \cite{BF}, Finster-Kath \cite{FK}, Corvino \cite{C}, Finster \cite{F}, Lee \cite{L}, Lee-Sormani \cite{LS1}, Huang-Lee-Sormani \cite{HLS}, the author \cite{BA2,BA4}, and Bryden \cite{B}. 

\begin{Thm}\label{SPMTEuc}
Let $U_{T}^i \subset M_i^3$ be a sequence such that $U_{T}^i\subset \mathcal{M}_{r_0,H_0,I_0}^{T,H_1,A_1}$ and 
\begin{align}
m_H(\Sigma_{T}^i) \rightarrow 0 \text{ as }i \rightarrow \infty. 
\end{align} 
If we assume,
\begin{align}
&\| Rc^i(\nu,\nu)\|_{W^{1,2}(\Sigma\times [0,T])} \le C,\label{FundBound1}
\\&\| R^i\|_{L^2(\Sigma\times [0,T])} \le C,\label{FundBound2}
\\&diam(\Sigma_t^i) \le D \text{ } \forall \text{ } i, t \in [0,T],\label{FundBound3}
\\&  |K^i| \le C \text{ on } \Sigma_T,\label{FundBound4}
\end{align}
where $W^{1,2}(\Sigma\times [0,T])$ is defined with respect to $\delta$, then
\begin{align}
\hat{g}^i \rightarrow \delta
\end{align}
in $W^{1,2}$ with respect to $\delta$ and thus volumes converge.  
\end{Thm}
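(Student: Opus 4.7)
The starting point is Geroch monotonicity of the Hawking mass along IMCF. Since $R^i \ge 0$, the map $t \mapsto m_H(\Sigma_t^i)$ is nondecreasing, and together with $m_H(\Sigma_0^i) \ge 0$ and $m_H(\Sigma_T^i) \to 0$ this forces the spacetime integral of the Geroch integrand to vanish. Using the two-sided bound $H_0 \le H^i \le H_1$ and the identity $|\mathring A^i|^2 = \tfrac{1}{2}(\lambda_1 - \lambda_2)^2$, one extracts
\begin{equation*}
\int_0^T \!\!\int_{\Sigma_t^i} \Big( |\nabla H^i|^2 + |\mathring A^i|^2 + R^i \Big)\, d\mu\, dt \longrightarrow 0,
\end{equation*}
so $\nabla H^i$, the traceless second fundamental form $\mathring A^i$, and $\sqrt{R^i}$ all tend to zero in $L^2(\Sigma\times[0,T])$; combined with the earlier $L^2$ stability argument of \cite{BA2,BA3} this already produces $\hat g^i \to \delta$ in $L^2$.

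To promote this to $W^{1,2}$ convergence I would exploit the new hypothesis $\|A^i\|_{W^{2,2}} \le A_1$. By Morrey's inequality it gives a uniform $C^{0,\alpha}$ bound on $A^i$ (hence on $H^i$), while the Neumann isoperimetric constant $I_0$ supplies a uniform Poincaré--Sobolev inequality on each leaf $\Sigma_t^i$. Interpolating the $L^2$ smallness of $\nabla H^i$ and $\mathring A^i$ against this Hölder control, one upgrades to uniform convergence $H^i \to 2 e^{-t/2}/r_0$ and $A^i - \tfrac{H^i}{2} g^i \to 0$; the correct limiting mean curvature is pinned down using the Hawking mass formula $m_H = \sqrt{|\Sigma|/(16\pi)^3}(16\pi - \int H^2\, d\mu)$, the exponential area growth $|\Sigma_t^i| = |\Sigma_0^i| e^t$ along IMCF, and the normalization $|\Sigma_0^i| = 4\pi r_0^2$.

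With $H^i$ and $A^i$ controlled, the metric convergence is assembled componentwise. For the $dt^2$ part, $1/(H^i)^2 \to r_0^2/4$; its $W^{1,2}$ derivatives are recovered from the parabolic equation for $H^i$ along IMCF, whose driving terms $|A^i|^2/H^i$ and $Rc^i(\nu,\nu)/H^i$ are controlled by \eqref{FundBound1} combined with the smallness already in hand. For the angular part one uses $\partial_t g^i = 2A^i/H^i \to g^i$, which, integrated against initial data converging to $r_0^2 \sigma$ in $W^{1,2}$ (using $|K^i| \le C$ on $\Sigma_T$ propagated back via the IMCF evolution of the leaf metric, $\mathrm{diam}(\Sigma_0^i) \le D$, the isoperimetric constant, the area normalization, and the spherical topology forced by $m_H(\Sigma_0^i)\ge 0$ in the limit), yields $g^i(x,t) \to r_0^2 e^t \sigma$ in $W^{1,2}$.

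The main obstacle is converting the integrated, Geroch-type smallness into the pointwise/$W^{1,2}$ control required for metric convergence: the hypothesis $\|A\|_{W^{2,2}} \le A_1$ is the key new device enabling this upgrade through interpolation and Sobolev embedding, and it is precisely what distinguishes this theorem from the author's earlier $L^2$ results. A secondary difficulty is the rigid identification of the round limit on the initial slice $\Sigma_0^i$, where one must combine the Gauss curvature, diameter, isoperimetric, and area bounds to rule out collapsing or bubbling of the sphere.
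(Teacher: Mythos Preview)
Your overall strategy---Geroch monotonicity, then upgrading via the $W^{2,2}$ bound on $A$, then assembling the $dt^2$ and angular pieces separately---matches the paper in spirit, but the execution diverges from it in ways that leave real gaps.

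First, the paper does not attempt to prove uniform convergence $H^i\to 2e^{-t/2}/r_0$. Interpolating $L^2$ smallness of $\nabla H^i$ (which is only a \emph{spacetime} integral, hence only gives leafwise smallness for a.e.\ $t$) against a $C^{0,\alpha}$ bound will not produce uniform-in-$t$ pointwise control without further work. Instead the paper introduces a chain of intermediate metrics $g_1^i,g_2^i,g_3^i$ (replacing $H$ by its leaf average $\bar H$, then $g^i(x,t)$ by $e^{t-T}g^i(x,T)$, then $\bar H$ by its limit) and proves each step converges to the next in $W^{1,2}$; the triangle inequality finishes.

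Second, your handling of the $t$-derivative of $1/H^2$ is the main gap. The evolution equation contains $-\Delta(1/H)$, and you need this term to \emph{vanish} in $L^2$, not merely to be bounded. The paper obtains this by squaring the linear PDE $(\partial_t-\Delta/H^2)H^2=-2|A|^2-2Rc(\nu,\nu)$ and integrating: the cross term, after integration by parts, becomes $\partial_t(|\nabla H|^2/H^2)$, so integrating in $t$ yields simultaneously $\|\partial_t H^2\|_{L^2}^2-4\|A\|_{L^4}^4\to 0$, $\|\Delta H^2/H^2\|_{L^2}\to 0$, and $\sup_t\int_{\Sigma_t}|\nabla H|^2/H^2\to 0$. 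The $W^{1,2}$ bound on $Rc(\nu,\nu)$ together with its weak convergence to zero forces $\|Rc(\nu,\nu)\|_{L^2}\to 0$, which is what makes the right-hand side collapse. This parabolic energy identity is the key new device and is absent from your sketch.

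Third, for the angular part the paper uses the $W^{2,2}$ bound on $A$ differently from what you describe. It applies Hamilton's interpolation inequality to the tensor $A-\tfrac{e^{-t/2}}{r_0}g$ (whose $L^2$ norm is small by Geroch and whose second derivatives are bounded) to conclude $\int_{\Sigma_t}|D A|^2\to 0$. Separately, on the single slice $\Sigma_T$ it invokes Petersen's $C^{1,\alpha}$ rigidity theorem (this is where $|K^i|\le C$ on $\Sigma_T$, the diameter bound, and the $L^1$ smallness of $K^i-r_0^{-2}$ enter) to get $g^i(\cdot,T)\to r_0^2e^T\sigma$ in $C^{1,\alpha}$; then an ODE-type estimate on $\partial_t(Dg)$ propagates this to all $t$. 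Your proposal to identify the round limit via isoperimetric/area/diameter bounds alone, without the Petersen rigidity machinery, is not enough to produce $C^{1,\alpha}$ (hence $W^{1,2}$) convergence of the leaf metric.
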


In our second theorem we prove stability of the RPI: that when the Hawking mass of the outer  and inner boundary converge to the same value $m$ the regions converge to annular regions in the Schwarzschild manifold. Prior work in this direction has been done by Lee-Sormani \cite{LS2} in the rotationally symmetric case and by the author \cite{BA2} using IMCF.

\begin{Thm}\label{SRPIEuc}
Let $U_{T}^i \subset M_i^3$ be a sequence such that $U_{T}^i\subset \mathcal{M}_{r_0,H_0,I_0}^{T,H_1,A_1}$, 
\begin{align}
m_H(\Sigma_{T}^i)- m_H(\Sigma_0^i) \rightarrow 0\text{, and }m_H(\Sigma_0) \rightarrow m > 0 \text{ as }i \rightarrow \infty. 
\end{align}
If we assume \eqref{FundBound1}, \eqref{FundBound2}, \eqref{FundBound3}, and \eqref{FundBound4} then
\begin{align}
\hat{g}^i \rightarrow g_S
\end{align}
in $W^{1,2}$ with respect to $\delta$ and thus volumes converge. 

\end{Thm}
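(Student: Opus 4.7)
The plan is to exploit the Geroch monotonicity formula for the Hawking mass along IMCF,
\begin{align*}
\frac{d}{dt}m_H(\Sigma_t) = \sqrt{\frac{|\Sigma_t|}{(16\pi)^3}} \int_{\Sigma_t}\left( 2\frac{|\nabla H|^2}{H^2} + \tfrac12 (\lambda_1-\lambda_2)^2 + R \right)d\mu,
\end{align*}
together with the hypotheses $R \ge 0$ and $H \ge H_0 > 0$ built into the class $\mathcal{M}^{T,H_1,A_1}_{r_0,H_0,I_0}$. The assumption $m_H(\Sigma_T^i) - m_H(\Sigma_0^i)\to 0$ forces each of the three nonnegative terms in the integrand to vanish in $L^1(\Sigma\times[0,T])$, so $|\nabla H^i|$, $\lambda_1^i-\lambda_2^i$, and $\sqrt{R^i}$ all converge to $0$ in $L^2(\Sigma\times[0,T])$, and monotonicity forces $m_H(\Sigma_t^i)\to m$ for every $t$. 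Using $IN_1(\Sigma)\ge I_0$ in a Poincar\'e inequality on each leaf, the first estimate yields $H^i - \bar H^i(t) \to 0$ in $L^2$; since IMCF makes $|\Sigma_t^i| = 4\pi r_0^2 e^t$ exactly, the Hawking mass identity then pins down
\begin{align*}
\bar H^i(t)^2 \to \frac{4}{r_0^2 e^t} - \frac{8m}{r_0^3 e^{3t/2}} = H_S(t)^2,
\end{align*}
exactly the mean-curvature-squared of coordinate spheres in Schwarzschild.

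Combined with the uniform lower bound on $H^i$, this gives $1/(H^i)^2 \to 1/H_S^2$ in $L^2$. Umbilicity of $A^i$ together with the IMCF evolution $\partial_t g^i = 2 A^i/H^i$ reduces the spatial evolution to $\partial_t g^i \approx g^i$; integrating forward from $g^i(\cdot,0)$, which by the area normalization, the diameter bound, and $|K^i|\le C$ on $\Sigma_T^i$ converges to $r_0^2\sigma$, produces $g^i(\cdot,t) \to r_0^2 e^t \sigma$ in $L^2$. Hence $\hat g^i \to g_S$ in $L^2(\Sigma\times[0,T],\delta)$, which recovers the $L^2$ stability of \cite{BA2}. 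The essentially new step is upgrading to $W^{1,2}$, and this is where the hypothesis $\|A\|_{W^{2,2}(\Sigma\times[0,T])}\le A_1$ enters crucially. By Rellich-Kondrachov a subsequence of $A^i$ converges strongly in $W^{1,2}$, and hence so does $H^i = \mathrm{tr}_{g^i} A^i$; uniqueness of the $L^2$ limit identifies the strong $W^{1,2}$ limit as $H_S$, and since every subsequence has a further convergent subsequence with the same limit, the full sequence converges strongly. The lower bound $H^i \ge H_0$ then promotes this to $W^{1,2}$ convergence $1/(H^i)^2 \to 1/H_S^2$, controlling the $dt^2$ component of $\hat g^i$. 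A parallel argument, passing strong $W^{1,2}$ convergence of $A^i/H^i$ through $\partial_t g^i = 2 A^i/H^i$, upgrades the spatial part to $W^{1,2}$.

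The main obstacle will be this last step: controlling $\partial_t H^i = -2|A^i|^2/H^i + \mathrm{tr}_{g^i}(\partial_t A^i)$ and its spatial derivatives in $L^2$ using the $W^{2,2}$ bound on $A^i$, and propagating $W^{1,2}$ control of $g^i(\cdot,0)$ forward in $t$ while preserving the identification of the Schwarzschild limit. Unlike in the PMT setting of Theorem \ref{SPMTEuc}, the limit here is nontrivial because of the positive mass $m$, so one cannot simply compare to a flat reference metric; instead one has to match the specific Schwarzschild IMCF data at every level, using the Hawking-mass identity as a selection principle to determine the correct limit $H_S(t)$ at each time $t$.
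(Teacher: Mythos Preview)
Your sketch of the $L^2$ argument is essentially the one from \cite{BA2}, and the paper relies on it as well. The substantive divergence is in the $W^{1,2}$ upgrade, and here your proposal has a real gap.

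You never invoke hypotheses \eqref{FundBound1} and \eqref{FundBound2}. In the paper these are not decorative: the bound $\|Rc^i(\nu,\nu)\|_{W^{1,2}}\le C$ is combined with the weak convergence of $Rc^i(\nu,\nu)$ (Lemma~\ref{WeakRicciEstimate}) to force strong $L^2$ convergence $Rc^i(\nu,\nu)\to -\tfrac{2m}{r_0}e^{-t/2}$. That strong convergence is then fed into a parabolic energy estimate for the linear equation $(\partial_t-H^{-2}\Delta)H^2=-2|A|^2-2Rc(\nu,\nu)$ (Lemma~\ref{RicEst}) to produce $L^2$ control on $\partial_t H^2$ and on $\Delta H^2$. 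Separately, \eqref{FundBound1} and \eqref{FundBound2} are what drive the Gauss curvature of $\Sigma_T^i$ to $e^{-T}/r_0^2$ in $L^p$, which is the input to Petersen's rigidity theorem giving $g^i(\cdot,T)\to r_0^2e^T\sigma$ in $C^{1,\alpha}$ (Lemma~\ref{C1alphaConvergence}). Without this anchor at $t=T$ you cannot even bound $(g^i)^{-1}$ uniformly, let alone propagate spatial-derivative control of $g^i$ to other times.

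Your Rellich--Kondrachov step also hides a circularity. From $A^i\to A_S$ strongly in $W^{1,2}$ you write ``hence so does $H^i=\mathrm{tr}_{g^i}A^i$,'' but the trace is taken against $(g^i)^{-1}$, whose first derivatives are exactly what you are trying to control. Your own formula $\partial_t H^i=-2|A^i|^2/H^i+\mathrm{tr}_{g^i}(\partial_t A^i)$ could rescue the time derivative once $(g^i)^{-1}$ is known to be bounded in $L^\infty$, but that boundedness itself comes from the $t=T$ anchor above. The paper sidesteps this loop entirely by working through a chain of intermediate metrics $\hat g^i\to g_1^i\to g_2^i\to g_3^i\to g_S$, each step isolating one source of variation (replace $H$ by $\bar H$; replace $g^i(\cdot,t)$ by $e^{t-T}g^i(\cdot,T)$; replace $\bar H$ by $H_S$; replace $g^i(\cdot,T)$ by $r_0^2e^T\sigma$), and for the spatial derivatives of $A$ it uses Hamilton's interpolation inequality $\|\nabla A\|_{L^2}^2\le C\|\nabla^2A\|_{L^2}\|A-\tfrac{e^{-t/2}}{r_0}g\|_{L^2}$ rather than compactness. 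Your compactness idea is not wrong in spirit, but as written it does not close without the curvature hypotheses and the anchoring argument that you omit.
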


In our third theorem we prove stability of the PMT in the asymptotically hyperbolic case: that when the Hawking mass of the outer boundary converges to $0$ the regions converge to annular regions in the hyperbolic space. Prior work in this direction was conducted by Dahl-Gicquad-Sakovich \cite{DGS}, Sakovich-Sormani \cite{SS}, the author \cite{BA3}, and Cabrera Pacheco \cite{AJCP}.

\begin{Thm}\label{SPMTHyp}
Let $U_{T}^i \subset M_i^3$ be a sequence such that $U_{T}^i\subset \mathcal{M}_{\Hy, r_0,H_0,I_0}^{T,H_1,A_1}$ and 
\begin{align}
m_H^{\Hy}(\Sigma_{T}^i) \rightarrow 0 \text{ as }i \rightarrow \infty.
\end{align}  
If we assume \eqref{FundBound1}, \eqref{FundBound2}, \eqref{FundBound3}, and \eqref{FundBound4} then
\begin{align}
\hat{g}^i \rightarrow g_{\Hy}
\end{align}
in $W^{1,2}$ with respect to $\delta$ and thus volumes converge. 

\end{Thm}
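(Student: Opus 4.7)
The plan is to adapt the arguments of Theorem \ref{SPMTEuc} to the hyperbolic setting, replacing the Euclidean Geroch monotonicity with its asymptotically hyperbolic analogue (available because $R \geq -6$):
$$\frac{d}{dt}m_H^{\Hy}(\Sigma_t) \;\geq\; C(t)\int_{\Sigma_t}\left(\frac{|\nabla H|^2}{H^2} + (\lambda_1-\lambda_2)^2 + (R+6)\right)d\mu,$$
where $C(t)>0$ depends only on $|\Sigma_t|$. The hypothesis $m_H^{\Hy}(\Sigma_T^i)\to 0$ together with the class assumption $m_H^{\Hy}(\Sigma_0^i)\geq 0$ then forces $m_H^{\Hy}(\Sigma_t^i)\to 0$ uniformly in $t \in [0,T]$ and
$$\int_0^T\int_{\Sigma_t^i}\left(\frac{|\nabla H|^2}{H^2} + (\lambda_1-\lambda_2)^2 + (R+6)\right)d\mu\,dt \;\to\; 0.$$
Combining this with $H_0 \leq H \leq H_1$ and the $W^{2,2}$ bound on $A$, Morrey's embedding promotes the decay to $C^{0,\alpha}$ control on $A$, so $\lambda_1-\lambda_2 \to 0$ uniformly and $H(\cdot,t)$ is asymptotically constant in $x$.

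Next, I identify the limits. Along IMCF we have $|\Sigma_t^i| = 4\pi r_0^2 e^t$, so $m_H^{\Hy}(\Sigma_t^i)\to 0$ and the formula \eqref{HypHawkingMass Intro} force $\int_{\Sigma_t^i}(H^2-4)d\mu \to 16\pi$. Combined with the near-constancy of $H(\cdot,t)$ this gives $H^i(\cdot,t) \to 2\sqrt{1+e^{-t}/r_0^2}$ in $L^2(\Sigma\times[0,T])$, which is precisely the mean curvature of the IMCF leaves in $(\Hy^3, g_{\Hy})$. Bootstrapping the $L^2$ control of $\nabla H/H$ with \eqref{FundBound1}, \eqref{FundBound2} and the IMCF evolution of $H$ then upgrades this to $W^{1,2}$ convergence of the lapse $1/(H^i)^2 \to \frac{1}{4}(1+e^{-t}/r_0^2)^{-1}$.

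The third step recovers convergence of the cross-sectional metric $g^i(x,t) \to r_0^2 e^t \sigma$. The uniform almost-umbilicity from step one, the bound $|K^i| \leq C$ on $\Sigma_T$, and the diameter bound \eqref{FundBound3} allow application of a De Lellis--M\"uller-type nearly-umbilic sphere theorem on $\Sigma_T^i$, yielding a diffeomorphism under which $g^i(\cdot,T)$ is close to the round metric of radius $r_0 e^{T/2}$. The IMCF evolution $\partial_t g_{ij} = 2A_{ij}/H$, together with the $W^{2,2}$ control on $A$ and the pointwise near-umbilicity $A - \frac{H}{2}g \to 0$, propagates this matching through the whole interval $[0,T]$ to yield $g^i \to r_0^2 e^t \sigma$ in $W^{1,2}(\Sigma\times[0,T])$ after pull-back. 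Combined with the lapse piece, this gives $\hat{g}^i \to g_{\Hy}$ in $W^{1,2}$, and volume convergence follows.

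The main obstacle will be the third step. The $L^2$ stability arguments of \cite{BA3} give convergence of $\lambda_1-\lambda_2$ only in $L^2_{t,x}$, which is too weak to control the induced metrics in $W^{1,2}$. The addition of the $W^{2,2}$ bound on $A$ in the class $\mathcal{M}_{\Hy,r_0,H_0,I_0}^{T,H_1,A_1}$ is what makes the upgrade possible: via Morrey's embedding it supplies $C^{0,\alpha}$ control on $|A|$, and interpolated against the integral decay from monotonicity it promotes the $L^2$ convergence to the strength required for $W^{1,2}$ convergence of $g^i$.
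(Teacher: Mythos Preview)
Your overall architecture matches the paper's: use hyperbolic Geroch monotonicity to extract the integral decay of $|\nabla H|^2/H^2$, $(\lambda_1-\lambda_2)^2$, and $R+6$; split $\hat g^i$ into its lapse $1/H^2$ and its leafwise piece $g^i(\cdot,t)$; identify the limit of each using $|\Sigma_t|=4\pi r_0^2 e^t$; control the metric on the terminal leaf $\Sigma_T^i$ and propagate backward via $\partial_t g = 2A/H$. The paper organises this as a chain of intermediate metrics $\hat g^i\to g_1^i\to g_2^i\to g_3^i\to g_{\Hy}$ (Theorems~\ref{gtog1}--\ref{g3todelta}) and concludes by the triangle inequality, but that is a presentational rather than substantive difference from what you describe.

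The real gap is in your third step. You invoke a ``De Lellis--M\"uller-type nearly-umbilic sphere theorem'' to conclude that $g^i(\cdot,T)$ is close to round. De Lellis--M\"uller concerns surfaces in $\R^3$; here $\Sigma_T^i$ sits in an abstract $(M_i^3,\hat g^i)$ with no pointwise ambient curvature control, so near-umbilicity $A-\tfrac{H}{2}g\to 0$ says nothing by itself about the \emph{intrinsic} geometry of $\Sigma_T^i$. The paper closes this gap differently: it uses the Gauss equation $K=K_{12}+\lambda_1\lambda_2$ together with the 3-dimensional identity $K_{12}=\tfrac12 R - Rc(\nu,\nu)$, the weak-to-strong $L^2$ upgrade for $Rc(\nu,\nu)$ coming from \eqref{FundBound1} and Lemma~\ref{WeakRicciEstimateHyperbolic}, and the $L^1\!\to L^p$ interpolation for $R+6$ coming from \eqref{FundBound2} and Corollary~\ref{GoToZeroHyperbolic}, to obtain $\int_{\Sigma_T}|K^i - e^{-T}/r_0^2|\to 0$. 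Only then, with the pointwise bound $|K^i|\le C$ from \eqref{FundBound4} and the diameter bound \eqref{FundBound3}, does an \emph{intrinsic} rigidity theorem (Petersen, Theorem~\ref{rigidity2}) yield $g^i(\cdot,T)\to r_0^2e^T\sigma$ in $C^{1,\alpha}$. Your sketch skips this conversion from extrinsic to intrinsic data, and the tool you name does not do it.

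A secondary point: the paper does not promote $\lambda_1-\lambda_2\to 0$ to uniform convergence via Morrey as you do. Instead it uses Hamilton's interpolation inequality (Lemma~\ref{InterpolationInequality}) to squeeze $\int|\nabla A|^2$ between the assumed $W^{2,2}$ bound and the $L^2$ smallness of $A-\tfrac{e^{-t/2}}{r_0}g$ (Corollary~\ref{2ndFundFormDerToZero}); this is what feeds Lemma~\ref{DerivativesOfMetricEstimates} and drives the $W^{1,2}$ convergence of the leafwise metrics in Theorem~\ref{g1tog2}. Your uniform-convergence route may also work, but you should be aware that the paper's mechanism for the spatial derivatives of $g^i$ is this interpolation, not Morrey.
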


In our fourth theorem we prove stability of the RPI in the asymptotically hyperbolic case: that when the Hawking mass of the outer  and inner boundary converge to the same value $m$ the regions converge to annular regions in the Anti-de~Sitter Schwarzschild manifold. Prior work in this direction has been conducted by the author \cite{BA3} using IMCF.

\begin{Thm}\label{SRPIHyp}
Let $U_{T}^i \subset M_i^3$ be a sequence such that $U_{T}^i\subset \mathcal{M}_{\Hy, r_0,H_0,I_0}^{T,H_1,A_1}$, 
\begin{align}
m_H^{\Hy}(\Sigma_{T}^i)- m_H^{\Hy}(\Sigma_0^i) \rightarrow 0\text{ and }m_H^{\Hy}(\Sigma_0) \rightarrow m > 0 \text{ as }i \rightarrow \infty. 
\end{align}
If we assume \eqref{FundBound1}, \eqref{FundBound2}, \eqref{FundBound3}, and \eqref{FundBound4} then
\begin{align}
\hat{g}^i \rightarrow g_{ADSS}
\end{align}
in $W^{1,2}$ with respect to $\delta$ and thus volumes converge. 
\end{Thm}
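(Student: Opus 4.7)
The plan is to follow the strategy already used for Theorem \ref{SPMTHyp}, but adapted to a positive limiting Hawking mass ($= m$) so that the target metric is Anti-de~Sitter Schwarzschild \eqref{AntiMetricDef} rather than hyperbolic space \eqref{HypMetricDef}. The engine throughout is Geroch monotonicity for the hyperbolic Hawking mass under IMCF, which on our class $\mathcal{M}_{\Hy,r_0,H_0,I_0}^{T,H_1,A_1}$ expresses $\frac{d}{dt} m_H^{\Hy}(\Sigma_t)$ as a positive multiple of the integral of a nonnegative combination of $\lvert\nabla H\rvert^2/H^2$, $\lvert A^{\circ}\rvert^2$, and $R+6$. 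Since the hypothesis $m_H^{\Hy}(\Sigma_T^i) - m_H^{\Hy}(\Sigma_0^i) \to 0$ forces this total to zero, I extract $L^2(\Sigma\times[0,T],\delta)$ smallness of $\lvert\nabla H^i\rvert/H^i$, of the traceless second fundamental form $\lvert A^{\circ,i}\rvert$, and of $R^i + 6$.

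First I would use these $L^2$ statements, combined with the $C^{0,\alpha}$ control supplied by the $W^{2,2}$ bound on $A$ via Morrey, to conclude that $H^i(\cdot,t)$ becomes spatially constant in the limit and that each slice $\Sigma_t^i$ is asymptotically umbilic. Smooth IMCF pins the area at $|\Sigma_t^i| = 4\pi r_0^2 e^t$ exactly, and combining monotonicity with $m_H^{\Hy}(\Sigma_0^i) \to m$ gives $m_H^{\Hy}(\Sigma_t^i) \to m$ uniformly in $t$. Solving \eqref{HypHawkingMass Intro} for the nearly constant $H^i$ and matching with the radial mean curvature of \eqref{AntiMetricDef} identifies the pointwise limit as the AdSS mean curvature $H_{AdSS}(t)$, which is a positive smooth function of $t$ alone encoding $m$ through the parameter in \eqref{AntiMetricDef}.

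Next I would upgrade these scalar convergences to $W^{1,2}$ convergence of $\hat{g}^i$, decomposed via \eqref{AmbientMetricDef} into the slice metric $g^i(x,t)$ and the normal coefficient $1/(H^i)^2$. For the slice piece, the $L^2$ smallness of $\lvert A^{\circ,i}\rvert$ fed into the IMCF evolution $\partial_t g^i = 2 A^i / H^i$ propagates intrinsic information across the foliation, while the Gauss equation together with the pointwise $K^i$ bound on $\Sigma_T$ and the diameter bound \eqref{FundBound3} controls the geometry of the base slice; together these should deliver $W^{1,2}$ convergence of $g^i(x,t)$ to $r_0^2 e^t \sigma$. For the normal coefficient, I would use the IMCF evolution $\partial_t H^i = -\Delta_{\Sigma_t^i}(1/H^i) - (\lvert A^i\rvert^2 + Rc^i(\nu,\nu))/H^i$, combined with the $W^{2,2}$ bound on $A$ and the Ricci bound \eqref{FundBound1}, to convert the already-established pointwise identification of $H^i$ into $W^{1,2}$ control of $1/H^i$. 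Assembling the two pieces yields $\hat{g}^i \to g_{AdSS}$ in $W^{1,2}(\Sigma\times[0,T],\delta)$, from which volume convergence is immediate.

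The main obstacle I anticipate is control in the $t$-direction: Geroch monotonicity gives no direct information about $\partial_t H$ or $\partial_t g$, so every such bound must be manufactured from the IMCF evolution equations. This is precisely where the new $W^{2,2}$ hypothesis on $A$ earns its keep — it supplies the regularity needed to close the $W^{1,2}$ estimate for $1/H$ and to treat the $\lvert A\rvert^2$ term quadratically, an improvement that was not available in the author's earlier $L^2$ work \cite{BA3}. A secondary delicacy, absent in the PMT setting of Theorem \ref{SPMTHyp}, is that the AdSS $dt^2$ coefficient $(1 + e^{-t}/r_0^2 - 2me^{-3t/2}/r_0^3)^{-1}$ carries a nontrivial $t$-profile whose mass-dependent correction must be detected: the pointwise matching of $H^i$ therefore needs to be quantitative enough, via a careful expansion of \eqref{HypHawkingMass Intro} around the limit $m>0$, to pick up this correction and not merely the hyperbolic leading order.
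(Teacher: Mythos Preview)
Your proposal is correct and follows essentially the same strategy as the paper: decompose $\hat{g}^i$ into its $dt^2$ coefficient and slice metric, control each in $W^{1,2}$ using the IMCF evolution equations together with Geroch monotonicity, the $W^{2,2}$ bound on $A$, and the Ricci bound, and anchor the slice geometry at $\Sigma_T$ via a rigidity theorem. The paper organizes this as a triangle-inequality chain through interpolating metrics $g_1^i, g_2^i, g_3^i$ (Theorems \ref{gtog1}--\ref{g3todelta}), with the key technical inputs being a parabolic energy estimate for $H^2$ (Lemma \ref{RicEst}) supplying the $t$-direction control you flag as the main obstacle, Hamilton's interpolation inequality giving $\|DA\|_{L^2} \to 0$ from the $W^{2,2}$ bound (Corollary \ref{2ndFundFormDerToZero}), and Petersen's $C^{1,\alpha}$ rigidity on $\Sigma_T$ (Lemma \ref{C1alphaConvergence}).
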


\begin{rmrk}
One should not expect $W^{1,2}$ convergence to imply SWIF convergence since the author and Sormani \cite{BS} have shown that $L^2$ convergence does not agree with GH and/or SWIF convergence (see example 3.4 in \cite{BS}) since valleys can form on sets of measure zero. By a similar example, one can see that $W^{1,2}$ convergence in dimension three will not imply SWIF convergence either. By what the main theorem  of the author and Sormani \cite{BS}, one expects to need to combine $L^p$ convergence with $C^0$ convergence from below in order to be able to conclude SWIF convergence which is what the author carries out in \cite{BA4} for the PMT under various assumptions.

Since $W^{1,2}$ convergence provides additional convergence information about the geometry of the sequence, as shown by LeFloch-Mardare \cite{LeM}, it is useful to show both SWIF and $W^{1,2}$ convergence when appropriate.  Also, notice that the curvature hypotheses of the main theorems will clearly hold in the ends of asymptotically flat or asymptotically hyperbolic manifolds whose asymptotic decay rates are uniformly controlled.
 \end{rmrk}

In Section 2 we will use IMCF to get important higher order estimates of the metric $\hat{g}^i$ on the foliated region $U_T^i\subset M_i$ which build upon the estimates of the previous papers \cite{BA2, BA3}. We also review some key estimates obtained in \cite{BA2, BA3} that are needed in this paper. 

In Section 3 we use the estimates of the previous section to show convergence of $\hat{g}$ to the appropriate prototype space $\delta, g_S, g_{\Hy},$ or $g_{ADSS}$. This is done by showing convergence of $\hat{g}$ to simpler metrics, successively, until we get to $\delta, g_S, g_{\Hy},$ or $g_{ADSS}$, and combining this chain of estimates by the triangle inequality.

\section{Higher Order Estimates for Manifolds Foliated by IMCF}\label{sect-Estimates}

In this section we expand upon the estimates found in the previous papers of the author on $L^2$ convergence \cite{BA2, BA3}. For the readers convenience we will repeat some of the estimates obtained in \cite{BA2} since we will also need them here but a majority of the estimates will be new. 

 We remember that IMCF is defined for surfaces $\Sigma^n \subset M^{n+1}$ evolving through a one parameter family of embeddings $F: \Sigma \times [0,T] \rightarrow M$, $F$ satisfying inverse mean curvature flow

\begin{equation}
\begin{cases}
\frac{\partial F}{\partial t}(p,t) = \frac{\nu(p,t)}{H(p,t)}  &\text{ for } (p,t) \in \Sigma \times [0,T)
\\ F(p,0) = \Sigma_0  &\text{ for } p \in \Sigma 
\end{cases}
\end{equation}
where $H$ is the mean curvature of $\Sigma_t := F_t(\Sigma)$ and $\nu$ is the outward pointing normal vector. The outward pointing normal vector will be well defined in our case since we will be considering regions of asymptotically flat  or asymptotically hyperbolic manifolds with one end. 

We also remind the reader of the definition of the Hawking mass in the asymptotically Euclidean setting,
\begin{align}
m_H(\Sigma) = \sqrt{\frac{|\Sigma|}{(16\pi)^3}} \left (16 \pi - \int_{\Sigma} H^2 d \mu \right ),
\end{align}
as well as the Hawking mass in the asymptotically hyperbolic setting,
\begin{align}
m_H^{\Hy}(\Sigma) = \sqrt{\frac{|\Sigma|}{(16\pi)^3}} \left (16 \pi - \int_{\Sigma} H^2-4 d \mu \right ).
\end{align}

\subsection{Previous Estimates for $L^2$ Convergence}\label{subsec: Old Estimates}

As a notational convenience we will use $\mathcal{H}^2 =H^2, H^2 - 4$ depending on whether we are considering the Euclidean or hyperbolic setting when it is clear from the context which model we have in mind. All of the results in this subsection are from the author's previous papers on $L^2$ stability \cite{BA2, BA3} so the reader is directed to \cite{BA2, BA3} for proofs of the following results.

We begin by noting some simple consequences of the assumptions on the Hawking mass.

\begin{Lem} \label{naiveEstimate}
Let $\Sigma^2 \subset M^3$ be a hypersurface and $\Sigma_t$ it's corresponding solution of IMCF. If 
\begin{align}
m_1 \le &m_H(\Sigma_t) \le m_2,
\\0 < H_0 \le &H(x,t) \le H_1 < \infty
\end{align}
 then 
\begin{align}
|\Sigma_t|&=|\Sigma_0| e^t
\\16 \pi \left (1 - \sqrt{\frac{16 \pi}{|\Sigma_0|}}m_2e^{-t/2}  \right ) &\le \int_{\Sigma_t} H^2 d \mu \le 16 \pi \left (1 - \sqrt{\frac{16 \pi}{|\Sigma_0|}}m_1e^{-t/2}  \right ) \label{Eq-NaiveAvg}
\\ \frac{16 \pi}{|\Sigma_0|} \left (1 - \sqrt{\frac{16 \pi}{|\Sigma_0|}}m_2e^{-t/2}  \right )e^{-t} &\le \dashint_{\Sigma_t} H^2 d \mu \le  \frac{16 \pi}{|\Sigma_0|} \left (1 - \sqrt{\frac{16 \pi}{|\Sigma_0|}}m_1e^{-t/2}  \right )e^{-t}\label{AvgHEst}
\end{align}
where $|\Sigma_t|$ is the $n$-dimensional area of $\Sigma$.

Hence if 
\begin{align}
m_{H}(\Sigma_T) \rightarrow 0
\end{align} then 
\begin{align}
\bar{H^2}_i(t):=\dashint_{\Sigma_t^i} \mathcal{H}_i^2 d \mu \rightarrow  \frac{4}{r_0^2}e^{-t}\label{unifAvgHEst1}
\end{align}
for every $t\in [0,T]$.

If 
\begin{align}
&m_{H}(\Sigma_T)-m_{H}(\Sigma_0) \rightarrow 0\text{ and }m_{H}(\Sigma_0) \rightarrow m > 0
\end{align}
 then  
 \begin{align}
\bar{H^2}_i(t):=\dashint_{\Sigma_t^i} \mathcal{H}_i^2 d \mu\rightarrow \frac{4}{r_0^2} \left(1-\frac{2}{r_0}m e^{-t/2}\right)e^{-t} \label{unifAvgHEst2}
\end{align}
for every $t\in [0,T]$.

\end{Lem}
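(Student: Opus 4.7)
The plan is to proceed by direct algebraic inversion of the Hawking-mass formula, combined with Geroch monotonicity applied in the squeeze-theorem style.

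First I would record the standard area evolution under IMCF: since $\partial_t F = \nu/H$, the first variation gives $\frac{d}{dt}|\Sigma_t| = \int_{\Sigma_t} H \cdot \frac{1}{H}\,d\mu = |\Sigma_t|$, hence $|\Sigma_t| = |\Sigma_0|e^t$. Next, I would solve the defining identity for $m_H$ (respectively $m_H^{\Hy}$) for the integral of $\mathcal{H}^2$:
\begin{align*}
\int_{\Sigma_t}\mathcal{H}^2\,d\mu \;=\; 16\pi\left(1 - \sqrt{\tfrac{16\pi}{|\Sigma_t|}}\,m_H(\Sigma_t)\right) \;=\; 16\pi\left(1 - \sqrt{\tfrac{16\pi}{|\Sigma_0|}}\,m_H(\Sigma_t)\,e^{-t/2}\right),
\end{align*}
where the notation $\mathcal{H}^2$ covers both the Euclidean case ($\mathcal{H}^2=H^2$) and hyperbolic case ($\mathcal{H}^2 = H^2 - 4$) uniformly since the two mass formulas have identical structure. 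Plugging the hypotheses $m_1 \le m_H(\Sigma_t) \le m_2$ into this identity and noting that $m_H$ enters with a negative sign (so the inequalities flip) gives the two-sided bound \eqref{Eq-NaiveAvg} on $\int_{\Sigma_t}\mathcal{H}^2\,d\mu$. Dividing by $|\Sigma_t|=|\Sigma_0|e^t$ then yields \eqref{AvgHEst}.

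For the two limiting statements, the key input is Geroch monotonicity, which holds along a smooth IMCF under the scalar curvature assumptions built into the classes $\mathcal{M}^{T,H_1,A_1}_{r_0,H_0,I_0}$ and $\mathcal{M}^{T,H_1,A_1}_{\Hy,r_0,H_0,I_0}$ (namely $R \ge 0$ or $R \ge -6$ respectively). This tells us $m_H(\Sigma_t^i)$ is nondecreasing in $t$, so combined with $m_H \ge 0$:
\begin{itemize}
\item If $m_H(\Sigma_T^i)\to 0$, then $0 \le m_H(\Sigma_t^i) \le m_H(\Sigma_T^i) \to 0$, giving $m_H(\Sigma_t^i)\to 0$ for every $t\in[0,T]$.
\item If $m_H(\Sigma_T^i)-m_H(\Sigma_0^i)\to 0$ and $m_H(\Sigma_0^i)\to m$, then monotonicity sandwiches $m_H(\Sigma_0^i) \le m_H(\Sigma_t^i) \le m_H(\Sigma_T^i)$, so $m_H(\Sigma_t^i)\to m$ for every $t$.
\end{itemize}
Feeding these pointwise limits back into the exact identity above, and using the normalization $|\Sigma_0|=4\pi r_0^2$ from the class definition, yields $\bar{H^2}_i(t) \to \frac{4}{r_0^2}e^{-t}$ in the first case and $\bar{H^2}_i(t)\to \frac{4}{r_0^2}\bigl(1-\frac{2}{r_0}m\,e^{-t/2}\bigr)e^{-t}$ in the second, precisely \eqref{unifAvgHEst1} and \eqref{unifAvgHEst2}.

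There is no genuine obstacle here: everything is algebraic manipulation of the Hawking-mass definition once the area evolution is recorded. The only subtlety worth flagging is the appeal to Geroch monotonicity in the sandwich argument, which is why the two limiting hypotheses (on $\Sigma_T^i$ alone, or on the difference together with $\Sigma_0^i$) suffice to control $m_H(\Sigma_t^i)$ at every intermediate time — without monotonicity the lemma's uniform-in-$t$ conclusion would not follow from endpoint information.
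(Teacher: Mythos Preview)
Your proposal is correct. The paper itself does not supply a proof of this lemma, instead referring the reader to the author's earlier work \cite{BA2, BA3}; your argument --- algebraic inversion of the Hawking-mass identity together with the area evolution $|\Sigma_t|=|\Sigma_0|e^t$, followed by a Geroch-monotonicity sandwich to propagate endpoint mass information to all $t\in[0,T]$ --- is exactly the standard route and is what those references contain.
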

\begin{rmrk}
The corresponding Lemma holds in the hyperbolic setting for $m_H^{\Hy}$ and $\mathcal{H}^2$.
\end{rmrk}

By rearranging the Geroch monotonicity calculation we arrive at the following result.

\begin{Lem}\label{dtintEstimate}
For any solution of IMCF we have the following formula
\begin{align}
\frac{d}{dt} \int_{\Sigma_t} H^2 d\mu =  \frac{(16 \pi)^{3/2}}{|\Sigma_t|^{1/2}} \left (\frac{1}{2} m_H(\Sigma_t) - \frac{d}{dt}m_H(\Sigma_t) \right )
\end{align}
So if we assume that 
\begin{align}
m_H(\Sigma_t^i) \rightarrow 0\text{ as }i \rightarrow \infty
\end{align}
 then we have for a.e. $t \in [0,T]$ that 
\begin{align}
\frac{d}{dt} \int_{\Sigma_t^i} H^2 d\mu \rightarrow 0\label{Eq-dtH^2PMT}
\end{align}

If we assume that 
\begin{align}
m_H(\Sigma_T^i) - m_H(\Sigma_0^i) \rightarrow 0\text{ and }m_H(\Sigma_t^i)\rightarrow m > 0\text{ as } i \rightarrow \infty
\end{align}
 then we have that 
\begin{align}
\frac{d}{dt} \int_{\Sigma_t^i} H^2 d\mu \rightarrow \frac{16 \pi}{r_0}me^{-t/2} \label{Eq-dtH^2RPI}
\end{align}
\end{Lem}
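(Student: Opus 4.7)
The plan is to derive the identity by a direct calculation from the Hawking mass formula and then read off the two asymptotic conclusions by combining Geroch monotonicity with the hypotheses on the boundary Hawking masses.

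\textbf{Deriving the identity.} By Lemma \ref{naiveEstimate}, $|\Sigma_t|=|\Sigma_0|e^t$, so $\frac{d}{dt}|\Sigma_t|^{1/2}=\frac{1}{2}|\Sigma_t|^{1/2}$. Writing
$$m_H(\Sigma_t)=\frac{|\Sigma_t|^{1/2}}{(16\pi)^{3/2}}\left(16\pi-\int_{\Sigma_t}H^2\,d\mu\right)$$
and applying the product rule gives
$$\frac{d}{dt}m_H(\Sigma_t)=\frac{1}{2}m_H(\Sigma_t)-\frac{|\Sigma_t|^{1/2}}{(16\pi)^{3/2}}\frac{d}{dt}\int_{\Sigma_t}H^2\,d\mu,$$
which is exactly the stated identity after rearrangement.

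\textbf{Extracting pointwise limits.} The class $\mathcal{M}_{r_0,H_0,I_0}^{T,H_1,A_1}$ enforces $R\ge 0$, so classical Geroch monotonicity gives $\frac{d}{dt}m_H(\Sigma_t^i)\ge 0$ and
$$\int_0^T\frac{d}{dt}m_H(\Sigma_t^i)\,dt=m_H(\Sigma_T^i)-m_H(\Sigma_0^i).$$
In the PMT setting, monotonicity plus $m_H(\Sigma_0^i)\ge 0$ and $m_H(\Sigma_T^i)\to 0$ forces $m_H(\Sigma_t^i)\to 0$ uniformly in $t$, while the right side of the displayed identity also tends to $0$; nonnegativity of the integrand then yields $\frac{d}{dt}m_H(\Sigma_t^i)\to 0$ in $L^1([0,T])$ and hence pointwise a.e.\ along a subsequence. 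In the RPI setting the hypothesis $m_H(\Sigma_T^i)-m_H(\Sigma_0^i)\to 0$ together with the same nonnegativity gives $\frac{d}{dt}m_H(\Sigma_t^i)\to 0$ a.e.\ in exactly the same way, while the hypothesis $m_H(\Sigma_t^i)\to m$ supplies the surviving main term.

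\textbf{Substituting.} Since $|\Sigma_t^i|=4\pi r_0^2 e^t$, a short calculation gives $(16\pi)^{3/2}/|\Sigma_t^i|^{1/2}=(32\pi/r_0)\,e^{-t/2}$. Inserting this together with the limits of $m_H(\Sigma_t^i)$ and $\tfrac{d}{dt}m_H(\Sigma_t^i)$ into the identity yields \eqref{Eq-dtH^2PMT} in the PMT case and
$$\frac{d}{dt}\int_{\Sigma_t^i}H^2\,d\mu\;\longrightarrow\;\frac{32\pi}{r_0}e^{-t/2}\cdot\frac{m}{2}=\frac{16\pi m}{r_0}e^{-t/2}$$
in the RPI case, which is \eqref{Eq-dtH^2RPI}. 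The only mildly technical point is the passage from $L^1$ convergence of the nonnegative quantity $\tfrac{d}{dt}m_H(\Sigma_t^i)$ to pointwise a.e.\ convergence, handled by a routine subsequence extraction; I do not expect any genuine obstacle.
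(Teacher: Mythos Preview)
Your argument is correct and is exactly the natural approach: differentiate the Hawking mass formula using $|\Sigma_t|=|\Sigma_0|e^t$ to get the identity, then feed in Geroch monotonicity to control $\tfrac{d}{dt}m_H(\Sigma_t^i)$ in $L^1$ and hence a.e. Note that the paper does not actually prove this lemma here---subsection~\ref{subsec: Old Estimates} begins by directing the reader to \cite{BA2,BA3} for all proofs---so there is no in-paper argument to compare against, but the derivation you give is the standard one and almost certainly what appears in those references. Your only caveat, that $L^1$ convergence of the nonnegative $\tfrac{d}{dt}m_H$ yields pointwise a.e.\ convergence only along a subsequence, is accurate and worth keeping in mind; the lemma as stated is slightly informal on this point, consistent with how it is used downstream (e.g.\ in Corollary~\ref{GoToZero2} and Theorem~\ref{g2tog3}, where subsequence arguments are made explicit).
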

\begin{rmrk}
The corresponding Lemma holds in the hyperbolic setting for $m_H^{\Hy}$ and $\mathcal{H}^2$.
\end{rmrk}

The two following Corollaries state the crucial estimates which get the rest of the results moving in the right direction. These convergence results follow from the Geroch monotonicity calculation.

\begin{Cor} \label{GoToZero}Let $\Sigma^i\subset M^i$ be a compact, connected surface with corresponding solution to IMCF $\Sigma_t^i$. If 
\begin{align}
m_H(\Sigma_0)\ge0 \text{ and }m_H(\Sigma^i_T) \rightarrow 0
\end{align}
 then for almost every $t \in [0,T]$, 
\begin{align}
&\int_{\Sigma_t^i} \frac{|\nabla H_i|^2}{H_i^2}d \mu \rightarrow 0, \hspace{1 cm} \int_{\Sigma_t^i} (\lambda_1^i-\lambda_2^i)^2d \mu \rightarrow 0,\hspace{1 cm} \int_{\Sigma_t^i} R^i d \mu \rightarrow 0,
\\ &\int_{\Sigma_t^i} Rc^i(\nu,\nu)d \mu \rightarrow 0, \hspace{.6 cm} \int_{\Sigma_t^i} K_{12}^id \mu \rightarrow 0, \hspace{2.1 cm} \int_{\Sigma_t^i} H_i^2 d\mu\rightarrow 16\pi,
\\&\int_{\Sigma_t^i} |A|_i^2 d \mu \rightarrow 8 \pi, \hspace{1 cm} \int_{\Sigma_t^i} \lambda_1^i\lambda_2^i d \mu \rightarrow 4\pi, \hspace{2 cm} \chi(\Sigma_t^i) \rightarrow 2,
\end{align}
as $i \rightarrow \infty$ where $K_{12}$ is the ambient sectional curvature tangent to $\Sigma_t$. As a consequence $\Sigma_t^i$ must eventually become topologically a sphere. 

If 
\begin{align}
\left (m_H(\Sigma^i_T)-m_H(\Sigma^i_0) \right ) \rightarrow 0\text{ where } m_H(\Sigma_0) \rightarrow m > 0
\end{align}
 then the first three integrals listed above $\rightarrow 0$  and for almost every $t \in [0,T]$ 
\begin{align}
 &\int_{\Sigma_t^i} H_i^2 d\mu\rightarrow 16 \pi \left (1 - \sqrt{\frac{16 \pi}{|\Sigma_0|}}me^{-t/2}  \right ),
\\& \int_{\Sigma_t^i} |A|_i^2 d \mu \rightarrow 8 \pi \left (1 - \sqrt{\frac{16 \pi}{|\Sigma_0|}}me^{-t/2}  \right ), \hspace{0.5 cm}
 \\& \int_{\Sigma_t^i} \lambda_1^i\lambda_2^i d \mu \rightarrow 4 \pi \left (1 - \sqrt{\frac{16 \pi}{|\Sigma_0|}}me^{-t/2}  \right ),\hspace{0.5 cm}
 \\&\int_{\Sigma_t^i}Rc^i(\nu,\nu)d\mu \rightarrow -\frac{8\pi}{r_0}m e^{-t/2},
 \\ &\int_{\Sigma_t^i}K_{12}^i d\mu \rightarrow -\frac{8\pi}{r_0}m e^{-t/2},\hspace{3 cm}\chi(\Sigma_t^i) \rightarrow 2.
\end{align}
As a consequence $\Sigma_t^i$ must eventually become topologically a sphere.
\end{Cor}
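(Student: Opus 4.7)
The natural starting point is Geroch's monotonicity identity for the Hawking mass along smooth IMCF. A direct calculation, combining Lemma \ref{dtintEstimate} with the IMCF evolution of $\int H^2\,d\mu$, gives
\begin{align*}
\frac{d}{dt}m_H(\Sigma_t) = \sqrt{\frac{|\Sigma_t|}{(16\pi)^3}}\Bigl( 4\pi(2-\chi(\Sigma_t)) + \int_{\Sigma_t}\Bigl[\frac{2|\nabla H|^2}{H^2} + \tfrac{1}{2}(\lambda_1-\lambda_2)^2 + R\Bigr] d\mu \Bigr),
\end{align*}
with the hyperbolic analogue obtained by replacing $R$ by $R+6$ (a consequence of $m_H^{\Hy} - m_H = 4|\Sigma|^{3/2}/(16\pi)^{3/2}$ together with $|\Sigma|'=|\Sigma|$ under IMCF). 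Under the hypothesis $R \ge 0$ (resp.\ $R \ge -6$) and the orientability inequality $\chi(\Sigma_t) \le 2$, every term in the bracket is non-negative.

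In the PMT case, non-negativity of the Geroch integrand combined with $m_H(\Sigma_0^i) \ge 0$ and $m_H(\Sigma_T^i)\to 0$ pinches $m_H(\Sigma_t^i)\to 0$ uniformly in $t$; in the RPI case the hypothesis $m_H(\Sigma_T^i)-m_H(\Sigma_0^i)\to 0$ is already in the required form. Integrating the identity above on $[0,T]$ thus produces a non-negative $L^1$ function of $t$ whose total integral tends to zero, so each of the four non-negative pieces $4\pi(2-\chi(\Sigma_t^i))$, $\int(\lambda_1-\lambda_2)^2 d\mu$, $\int \frac{|\nabla H|^2}{H^2} d\mu$, and $\int R\,d\mu$ converges to $0$ in $L^1([0,T])$, and hence pointwise a.e.\ along a subsequence (upgraded to the full sequence by uniqueness of the limit). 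Since $\chi(\Sigma_t^i) \in \Z$ and $\chi \le 2$, the fourth conclusion forces $\chi(\Sigma_t^i) = 2$ for large $i$, so $\Sigma_t^i$ is topologically a sphere; smoothness of the flow then propagates this topology through every $t \in [0,T]$.

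The remaining asserted limits follow from these Geroch-error vanishings together with Lemma \ref{naiveEstimate} and elementary curvature identities. Lemma \ref{naiveEstimate} provides the pointwise-in-$t$ limit of $\int H^2 d\mu$ (namely $16\pi$ in the PMT case and $16\pi(1 - \tfrac{2m}{r_0}e^{-t/2})$ in the RPI case). The pointwise identity $|A|^2 = \tfrac{1}{2}(H^2+(\lambda_1-\lambda_2)^2)$ combined with $\int(\lambda_1-\lambda_2)^2 \to 0$ then delivers the limit of $\int |A|^2 d\mu$, and $2\lambda_1\lambda_2 = H^2 - |A|^2$ delivers $\int \lambda_1\lambda_2 d\mu$. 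For the ambient pieces, Gauss-Bonnet $\int K d\mu = 2\pi\chi \to 4\pi$ combined with the Gauss equation $K = \lambda_1\lambda_2 + K_{12}$ yields $\int K_{12} d\mu$, and the three-dimensional splitting $R = 2(Rc(\nu,\nu)+K_{12})$ then yields $\int Rc(\nu,\nu) d\mu$.

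The hyperbolic case proceeds identically with $\mathcal{H}^2 = H^2-4$ and the shift $R\mapsto R+6$; Lemma \ref{naiveEstimate} (hyperbolic version) supplies the pointwise limit of $\int H^2 d\mu$ coming from the hyperbolic or Anti-de~Sitter Schwarzschild model sphere, and the same Gauss-Bonnet/Gauss-equation manipulation delivers all remaining convergences. The one delicate analytic point throughout is the passage from $L^1$-in-$t$ convergence of the integrated Geroch error to pointwise a.e.\ convergence of each individual piece, which I expect to handle via standard subsequence/diagonalization together with the uniqueness of the identified limit; after that, everything reduces to bookkeeping with Gauss equation, Gauss-Bonnet, and the algebraic relations among $H^2$, $|A|^2$, and $(\lambda_1-\lambda_2)^2$.
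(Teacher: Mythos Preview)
Your proposal is correct and matches the paper's approach. The paper does not reprove this corollary here---it cites \cite{BA2,BA3} and remarks that ``these convergence results follow from the Geroch monotonicity calculation''---and your argument is precisely that calculation: non-negativity of each summand in the Geroch integrand forces $L^1$-in-$t$ vanishing of the four pieces, Lemma~\ref{naiveEstimate} supplies the limit of $\int H^2\,d\mu$, and the Gauss equation, Gauss--Bonnet, and the algebraic identities among $H^2$, $|A|^2$, $(\lambda_1-\lambda_2)^2$ finish the bookkeeping.
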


Now a similar corollary in the hyperbolic setting.

\begin{Cor} \label{GoToZeroHyperbolic}Let $\Sigma^i\subset M^i$ be a sequence of compact, connected surface with corresponding solution to IMCF $\Sigma_t^i$ where $R^i \ge -6$. 

If 
\begin{align}
m_H^{\Hy}(\Sigma_0^i)\ge0 \text{ and }m_H^{\Hy}(\Sigma^i_T) \rightarrow 0
\end{align}
  then for almost every $t \in [0,T]$,
\begin{align}
&\int_{\Sigma_t^i} \frac{|\nabla H_i|^2}{H_i^2}d \mu \rightarrow 0, \hspace{.8 cm} \int_{\Sigma_t^i} (\lambda_1^i-\lambda_2^i)^2d \mu \rightarrow 0,\hspace{.2 cm} \int_{\Sigma_t^i} R^i+6 d \mu \rightarrow 0,
\\&\int_{\Sigma_t^i} Rc^i(\nu,\nu)+2d \mu \rightarrow 0,  \int_{\Sigma_t^i} K_{12}^i+1d \mu \rightarrow 0, \hspace{.4 cm} \int_{\Sigma_t^i} H_i^2-4 d\mu\rightarrow 16\pi,
\\&\int_{\Sigma_t^i} |A_i|^2-2 d \mu \rightarrow 8 \pi,\hspace{.5 cm} \int_{\Sigma_t^i} \lambda_1^i\lambda_2^i-1 d \mu \rightarrow 4\pi, \hspace{.4 cm} \chi(\Sigma_t^i) \rightarrow 2,
\end{align}
as $i \rightarrow \infty$ where $K_{12}$ is the ambient sectional curvature tangent to $\Sigma_t$. As a consequence $\Sigma_t^i$ must eventually become topologically a sphere. 

If 
\begin{align}
\left (m_H^{\Hy}(\Sigma^i_T)-m_H^{\Hy}(\Sigma^i_0) \right ) \rightarrow 0\text{ where }m_H^{\Hy}(\Sigma_0) \rightarrow m > 0
\end{align}
 then the first three integrals listed above tend to zero and for almost every $t \in [0,T]$,
\begin{align}
 &\int_{\Sigma_t^i} H_i^2-4 d\mu\rightarrow 16 \pi \left (1 - \sqrt{\frac{16 \pi}{|\Sigma_0|}}me^{-t/2}  \right ),
 \hspace{0.2 cm} 
 \\&\int_{\Sigma_t^i} |A_i|^2-2 d \mu \rightarrow 8 \pi \left (1 - \sqrt{\frac{16 \pi}{|\Sigma_0|}}me^{-t/2}  \right ), \hspace{0.5 cm}
 \\& \int_{\Sigma_t^i} \lambda_1^i\lambda_2^i-1 d \mu \rightarrow 4 \pi \left (1 - \sqrt{\frac{16 \pi}{|\Sigma_0|}}me^{-t/2}  \right ),\hspace{0.5 cm}
 \\&\int_{\Sigma_t^i}Rc^i(\nu,\nu)+2d\mu \rightarrow -\frac{8\pi}{r_0}m e^{-t/2},
 \\ &\int_{\Sigma_t^i}K_{12}^i+1d\mu \rightarrow \frac{8\pi}{r_0}m e^{-t/2},\hspace{3 cm}\chi(\Sigma_t^i) \rightarrow 2.
\end{align}
 As a consequence $\Sigma_t^i$ must eventually become topologically a sphere.
\end{Cor}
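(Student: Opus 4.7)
The plan is to follow the strategy of the Euclidean Corollary \ref{GoToZero}, with the appropriate hyperbolic Geroch monotonicity identity replacing its Euclidean counterpart. Schematically the hyperbolic Geroch identity reads
\begin{equation*}
\frac{d}{dt} m_H^{\Hy}(\Sigma_t) = \sqrt{\frac{|\Sigma_t|}{(16\pi)^3}}\left(\int_{\Sigma_t}\frac{2|\nabla H|^2}{H^2} + \tfrac{1}{2}(\lambda_1-\lambda_2)^2 + (R+6)\,d\mu + 8\pi(\chi(\Sigma_t)-2)\right),
\end{equation*}
so that under the standing hypotheses $R \geq -6$ and $\chi(\Sigma_t)\leq 2$ every term on the right is non-negative.

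First I would verify that in both cases of the statement the difference $m_H^{\Hy}(\Sigma_T^i) - m_H^{\Hy}(\Sigma_0^i)$ tends to $0$. In the PMT-type hypothesis this follows from $m_H^{\Hy}(\Sigma_T^i)\to 0$ combined with $m_H^{\Hy}(\Sigma_0^i) \geq 0$ from the class definition and the monotonicity above, which together force $m_H^{\Hy}(\Sigma_0^i)\to 0$; in the RPI-type hypothesis it is assumed directly. Integrating the monotonicity from $0$ to $T$ then yields
\begin{equation*}
\int_0^T \sqrt{\frac{|\Sigma_t^i|}{(16\pi)^3}}\left(\int_{\Sigma_t^i}\frac{2|\nabla H|^2}{H^2} + \tfrac{1}{2}(\lambda_1-\lambda_2)^2 + (R+6)\,d\mu + 8\pi(\chi(\Sigma_t^i)-2)\right) dt \to 0.
\end{equation*}
Each non-negative summand separately vanishes in $L^1([0,T])$, which, after passing to a subsequence, gives the first three claimed a.e.\ pointwise limits together with $\chi(\Sigma_t^i)\to 2$; by integrality of the Euler characteristic, $\Sigma_t^i$ is eventually a topological sphere.

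The remaining limits are algebraic consequences of these facts together with the hyperbolic analogue of Lemma \ref{naiveEstimate}. Using the pointwise identities $\lambda_1\lambda_2 = \tfrac14(H^2 - (\lambda_1-\lambda_2)^2)$ and $|A|^2 = H^2 - 2\lambda_1\lambda_2$, the control of $\int H^2 d\mu$ supplied by the hyperbolic Lemma \ref{naiveEstimate} (giving $\int(H^2-4)d\mu\to 16\pi$ in the PMT case and the corresponding $m$-dependent expression in the RPI case) together with the vanishing of $\int(\lambda_1-\lambda_2)^2 d\mu$ translate immediately into the claimed limits of $\int(|A|^2-2)d\mu$ and $\int(\lambda_1\lambda_2-1)d\mu$. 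For $K_{12}$ I would use Gauss--Bonnet $\int K d\mu = 2\pi\chi(\Sigma_t^i)\to 4\pi$ together with the Gauss equation $K = K_{12} + \lambda_1\lambda_2$ to solve for the limit of $\int(K_{12}+1)d\mu$. For $Rc(\nu,\nu)$ I would combine the three-dimensional trace identity $R = 2K_{12} + 2Rc(\nu,\nu)$ with the limit just obtained for $\int K_{12} d\mu$ and with $\int(R+6)d\mu \to 0$. The two sub-cases differ only through the target value of $\int H^2 d\mu$, which the appropriate branch of Lemma \ref{naiveEstimate} supplies uniformly.

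The main conceptual difficulty associated with Geroch monotonicity -- the validity of the identity and the need to handle jumps of weak IMCF -- is not really an issue here, because the flow is smooth and foliates $U_T^i$ by hypothesis, so no Huisken--Ilmanen jump argument is required. Beyond the Euclidean treatment of \cite{BA2} the only new input is tracking the $-6$, $-4$, $+2$, $-1$, $+1$ shifts generated by the background curvature of $\Hy^3$, which is purely arithmetic.
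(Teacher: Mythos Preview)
Your approach is correct and is precisely the argument the paper defers to \cite{BA3} for: integrate the hyperbolic Geroch monotonicity identity, use non-negativity of each summand to force them individually to zero in $L^1([0,T])$, and then read off the remaining limits from elementary algebra, Gauss--Bonnet, the Gauss equation, and the trace identity $R = 2K_{12} + 2Rc(\nu,\nu)$.

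One small correction: the Euler-characteristic term in the Geroch identity should be $8\pi - 4\pi\chi(\Sigma_t) = 4\pi\bigl(2-\chi(\Sigma_t)\bigr)$, not $8\pi(\chi(\Sigma_t)-2)$. As you yourself note, the term must be non-negative when $\chi\leq 2$, and your written expression has the wrong sign (and an extra factor of $2$). This is only a typographical slip---your subsequent reasoning uses the correct sign---but you should fix the displayed formula.
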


In the following proposition an important diffeomorphism from $U_T^i$ to $\Sigma \times [0,T]$ is defined which is used throughout the rest of the paper to define and show $W^{1,2}$ convergence of $\hat{g}^i$ to the appropriate prototype space. We start by choosing an area preserving diffeomorphism 
\begin{align}
F_i:\Sigma_0^i \rightarrow S^2(r_0)
\end{align}
 which we know is well defined since we assume that 
 \begin{align}
 |\Sigma_0^i| = |S^2(r_0)| = 4 \pi r_0^2.
 \end{align}
  Then by the evolution of area under IMCF, $F_i$ automatically extends to a diffeomorphism 
  \begin{align}
  F_i(t): \Sigma_t \rightarrow S^2(r_0e^{t/2})
  \end{align}
   which defines a diffeomorphism from $U_T^i$ to $\Sigma\times [0,T]$ and is the coordinate system we will use throughout the rest of the paper.

\begin{Prop}\label{avgH}If $\Sigma_t^i$ is a sequence of IMCF solutions where 
\begin{align}
\int_{\Sigma_t^i} \frac{|\nabla H|^2}{H^2}&d \mu \rightarrow 0 \text{ as }i \rightarrow \infty, 
\\0 < H_0 \le &H(x,t) \le H_1 < \infty,
\\&|A|(x,t) \le A_0 < \infty
\end{align}
 then
\begin{align}
\int_{\Sigma_t^i} (H_i - \bar{H}_i)^2 d \mu \rightarrow 0
\end{align}
as $i \rightarrow \infty$ for almost every $t \in [0,T]$ where $\bar{H}_i = \dashint_{\Sigma_t^i}H_id \mu$. 

Let $d\mu_t^i$ be the volume form on $\Sigma$ w.r.t. $g^i(\cdot,t)$ then we can find a parameterization of $\Sigma_t$ so that 
\begin{align}
d\mu_t^i = r_0^2 e^t d\sigma
\end{align}
where $d\sigma$ is the standard volume form on the unit sphere.

Then for almost every $t \in [0,T]$ and almost every $x \in \Sigma$, with respect to $d\sigma$, we have that 
\begin{align}
H_i(x,t) - \bar{H}_i(t) \rightarrow  0,
\end{align}
along a subsequence. 

\end{Prop}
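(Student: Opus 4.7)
The first assertion reduces to a Poincar\'e inequality. Since $H \ge H_0 > 0$, the hypothesis yields
\begin{align*}
\int_{\Sigma_t^i}|\nabla H_i|^2 d\mu \;\le\; H_1^2 \int_{\Sigma_t^i} \frac{|\nabla H_i|^2}{H_i^2} d\mu \;\longrightarrow\; 0.
\end{align*}
To conclude $\int_{\Sigma_t^i}(H_i - \bar{H}_i)^2 d\mu \to 0$, I would apply a Poincar\'e inequality on $\Sigma_t^i$ with a constant uniform in both $i$ and $t$. Uniformity should follow by combining the Neumann isoperimetric bound $IN_1(\Sigma_0^i)\ge I_0$ with a bi-Lipschitz comparison of $g^i(\cdot,t)$ and $g^i(\cdot,0)$: under IMCF one has $\partial_t g_{ij} = 2 A_{ij}/H$, and the assumed bounds $H_0 \le H \le H_1$ and $|A| \le A_0$ force $\partial_t g_{ij}$ to be bounded in $L^\infty$, so the metrics stay uniformly bi-Lipschitz for $t \in [0,T]$; this propagates the isoperimetric bound, and a Federer--Fleming-type argument then produces the desired uniform Poincar\'e constant.

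For the parameterization, I would construct $F_i(t)$ in two steps. Since $\Sigma_0^i$ and $S^2(r_0)$ are topological 2-spheres of equal area $4\pi r_0^2$, Moser's theorem on equivalence of volume forms provides an area-preserving diffeomorphism $F_i : \Sigma_0^i \to S^2(r_0)$. Along the IMCF flow map $\Phi_i(t):\Sigma_0^i \to \Sigma_t^i$, the identity $\partial_t d\mu_t = d\mu_t$ gives $\Phi_i(t)^* d\mu_t^i = e^t d\mu_0^i$ pointwise. Composing $F_i$ with $\Phi_i(t)^{-1}$ and rescaling the target to radius $r_0 e^{t/2}$ yields area-preserving maps $F_i(t): \Sigma_t^i \to S^2(r_0 e^{t/2})$, and pulling back to the fixed sphere $\Sigma$ gives $d\mu_t^i = r_0^2 e^t d\sigma$ as stated.

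For the pointwise convergence, I would integrate the first assertion in $t$. Because $|H_i - \bar{H}_i| \le 2H_1$ and $|\Sigma_t^i| = 4\pi r_0^2 e^t$ is uniformly bounded on $[0,T]$, the dominated convergence theorem upgrades the a.e.\ in $t$ convergence to
\begin{align*}
\int_0^T \int_\Sigma (H_i(x,t) - \bar{H}_i(t))^2 \, r_0^2 e^t \, d\sigma(x)\, dt \;\longrightarrow\; 0.
\end{align*}
That is, $H_i - \bar{H}_i \to 0$ in $L^2(\Sigma \times [0,T], d\sigma \otimes dt)$, so the standard subsequence principle extracts a subsequence converging pointwise almost everywhere in $(x,t)$, which is the claim.

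The main obstacle is the uniform Poincar\'e constant. The isoperimetric hypothesis is imposed only on $\Sigma_0^i$, so propagating it to every $\Sigma_t^i$ with a constant independent of $i$ and $t$ is where the upper bound on $|A|$ and the two-sided bound on $H$ do the real work; without them the metrics $g^i(\cdot,t)$ could distort enough to collapse the isoperimetric constant. If a direct bi-Lipschitz propagation is awkward, a fallback would be to invoke a Michael--Simon type inequality intrinsic to each $\Sigma_t^i \subset M_i^3$, using the uniform upper bound on $H$, but either route demands care in tracking the constants.
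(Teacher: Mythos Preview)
Your proposal is correct and aligns with the paper's approach. The paper does not reprove this proposition in full---it is quoted from the author's earlier work \cite{BA2,BA3}---but the diffeomorphism construction you give (Moser's theorem for an area-preserving $F_i:\Sigma_0^i\to S^2(r_0)$, then automatic extension via $\partial_t d\mu_t = d\mu_t$ under IMCF) is exactly the one the paper spells out in the paragraph preceding the proposition, and your Poincar\'e argument is precisely why the Neumann isoperimetric bound $IN_1(\Sigma)\ge I_0$ appears in Definition~\ref{IMCFClass}; the bi-Lipschitz propagation via $\partial_t g = 2A/H$ is also how the paper controls $g^i(\cdot,t)$ (cf.\ Lemma~\ref{metricEst}).
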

In Corollary \ref{GoToZero} and Corollary \ref{GoToZero2} we note that the Ricci curvature integrals are not so useful since we have not assumed anything about the sign of the Ricci curvature. In order to obtain useful estimates of the Ricci curvature we now turn to obtain weak convergence to the expected values.

\begin{Lem}\label{WeakRicciEstimate}Let $\Sigma^i_0\subset M^3_i$ be a compact, connected surface with corresponding solution to IMCF $\Sigma_t^i$. Then if $\phi \in C_c^1(\Sigma\times (a,b))$  and $0\le a <b\le T$ we can compute the  estimate
\begin{align}
\int_a^b\int_{\Sigma_t^i}& 2\phi Rc^i(\nu,\nu)d\mu d t=  \int_{\Sigma_a^i} \phi H_i^2 d\mu -  \int_{\Sigma_b^i} \phi H_i^2 d\mu 
\\&+ \int_a^b\int_{\Sigma_t^i}2\phi\frac{|\nabla H_i|^2}{H_i^2}-2\frac{\hat{g}^j(\nabla \phi, \nabla H_i)}{H_i} +\phi(H_i^2-2|A|_i^2) d\mu 
\end{align}

If 
\begin{align}
m_H(\Sigma^i_T) \rightarrow 0
\end{align}
 and $\Sigma_t$ satisfies the hypotheses of Proposition \ref{avgH} then  
\begin{align}
\int_a^b\int_{\Sigma_t^i} \phi Rc^i(\nu,\nu)d\mu dt \rightarrow 0
\end{align}
If 
\begin{align}
m_H(\Sigma^i_T)-m_H(\Sigma^i_0)  \rightarrow 0,&
\\m_H(\Sigma_T) \rightarrow m > 0,&
\end{align} 
and $\Sigma_t$ satisfies the hypotheses of Proposition \ref{avgH}  then 
\begin{align}
\int_a^b\int_{\Sigma_t}& \phi Rc^i(\nu,\nu)d\mu d t \rightarrow \int_a^b\int_{\Sigma_t} \frac{-2}{r_0}me^{-t/2} \phi d\mu dt.
\end{align}
\end{Lem}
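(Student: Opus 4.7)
My plan is to derive the identity by time-differentiating $\int_{\Sigma_t^i}\phi H_i^2\,d\mu$ along the IMCF and then pass to the limit term by term. Under IMCF one has $\partial_t d\mu = d\mu$ (since $|\Sigma_t|=|\Sigma_0|e^t$) and the standard evolution $\partial_t H = -\Delta(1/H) - (|A|^2+Rc(\nu,\nu))/H$, whence
\[
\partial_t(H^2) = \frac{2\Delta H}{H} - \frac{4|\nabla H|^2}{H^2} - 2|A|^2 - 2Rc(\nu,\nu).
\]
Multiplying by $\phi$, integrating over $\Sigma_t^i$, and applying the divergence theorem to $\phi\Delta H/H$ produces the cross-gradient $\hat{g}^i(\nabla\phi,\nabla H)/H$ and adjusts the coefficient of $|\nabla H|^2/H^2$; integrating in $t$ from $a$ to $b$ and rearranging to isolate $2\int\phi Rc^i(\nu,\nu)\,d\mu$ then gives the claimed identity, with the boundary pieces $\int_{\Sigma_a^i}\phi H^2 d\mu$ and $\int_{\Sigma_b^i}\phi H^2 d\mu$ coming from the fundamental theorem of calculus.

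For the convergence statements I would dispatch the right-hand side term by term using Corollary \ref{GoToZero} and Proposition \ref{avgH}. The first gradient term tends to zero from $\int\phi|\nabla H|^2/H^2\,d\mu\le\|\phi\|_\infty\int|\nabla H|^2/H^2\,d\mu\to 0$, and the cross-gradient is controlled by Cauchy--Schwarz:
\[
\Big|\int\frac{\hat{g}^i(\nabla\phi,\nabla H)}{H}\,d\mu\Big| \le \|\nabla\phi\|_\infty\,|\Sigma_t^i|^{1/2}\Big(\int\frac{|\nabla H|^2}{H^2}d\mu\Big)^{1/2}\to 0.
\]
The algebraic identity $H^2-2|A|^2 = -(\lambda_1-\lambda_2)^2$, combined with $\int(\lambda_1-\lambda_2)^2 d\mu\to 0$, gives $\int\phi(H^2-2|A|^2)d\mu\to 0$ in both regimes. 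The remaining boundary terms $\int_{\Sigma_a^i}\phi H_i^2 d\mu$ and $\int_{\Sigma_b^i}\phi H_i^2 d\mu$, and the $+\phi H^2$ piece left over from $\partial_t d\mu=d\mu$, are handled using Proposition \ref{avgH}: pointwise convergence $H_i(x,t)\to\bar H_i(t)$ a.e., together with the uniform bound $H_i\le H_1$, lets dominated convergence give $\int_{\Sigma_t^i}\phi H_i^2 d\mu\to \overline{H^2}(t)\,r_0^2 e^t\int_\Sigma\phi\,d\sigma$, with $\overline{H^2}(t)$ supplied by Lemma \ref{naiveEstimate}.

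In the PMT limit $\overline{H^2}(t)\,r_0^2 e^t = 4$ is constant in $t$, so the boundary difference cancels with the $\int\phi H^2 d\mu$ contribution and the whole right-hand side tends to zero. In the RPI limit $\overline{H^2}(t)\,r_0^2 e^t = 4(1-\tfrac{2m}{r_0}e^{-t/2})$ depends nontrivially on $t$, so those two $H^2$ contributions no longer cancel and produce the advertised limit involving the factor $-\tfrac{2m}{r_0}e^{-t/2}\phi$. The main obstacle is this last reassembly: Proposition \ref{avgH} is essential for upgrading the $L^1$-averaged controls of Corollary \ref{GoToZero} to $\phi$-weighted convergence, because $\phi$ is not constant in space and the naive integrals of $H^2$ and $|A|^2$ alone do not interact with the weight.
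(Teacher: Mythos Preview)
Your approach is the natural one and is essentially what the original argument must do: differentiate $\int_{\Sigma_t}\phi H^2\,d\mu$ using $\partial_t d\mu=d\mu$ and the evolution of $H$, integrate by parts in space, integrate in $t$, and then dispatch each term via Corollary~\ref{GoToZero} and Proposition~\ref{avgH}. Note that the present paper does not actually supply a proof of this lemma; it is quoted from \cite{BA2,BA3}, so there is no in-paper argument to compare against, but your outline is the standard derivation.

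One point of your bookkeeping is muddled and worth tightening. When you differentiate $\int_{\Sigma_t}\phi H^2\,d\mu$ with $\phi=\phi(x,t)$ you pick up a term $\int(\partial_t\phi)H^2\,d\mu$ in addition to the $\phi H^2$ coming from $\partial_t d\mu=d\mu$. The latter is exactly what combines with $-2\phi|A|^2$ to give $\phi(H^2-2|A|^2)=-\phi(\lambda_1-\lambda_2)^2\to 0$; you cannot use that $\phi H^2$ a second time. Since $\phi\in C_c^1(\Sigma\times(a,b))$, the boundary contributions at $t=a,b$ vanish identically, so in the RPI case the nonzero limit is produced entirely by the $(\partial_t\phi)H^2$ term: by Proposition~\ref{avgH} and Lemma~\ref{naiveEstimate}, $H_i^2\,d\mu\to 4\bigl(1-\tfrac{2m}{r_0}e^{-t/2}\bigr)d\sigma$, and one integration by parts in $t$ (using compact support) turns $\int_a^b\!\int(\partial_t\phi)\,4\bigl(1-\tfrac{2m}{r_0}e^{-t/2}\bigr)d\sigma\,dt$ into the stated limit. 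In the PMT case the same term vanishes because $4$ is constant in $t$. With this correction your argument goes through; as written, your ``boundary difference cancels with the $\int\phi H^2$ contribution'' double-counts the measure-evolution piece and misidentifies the source of the RPI limit.
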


Now a similar lemma in the hyperbolic setting.

\begin{Lem}\label{WeakRicciEstimateHyperbolic}Let $\Sigma^i_0\subset M^3_i$ be a compact, connected surface with corresponding solution to IMCF $\Sigma_t^i$. Then if $\phi \in C^1(\Sigma\times (a,b))$, $0\le a <b\le T$, and $\sigma$ is the round metric on $S^2$ with area element $d \sigma$ we can compute the estimate,
\begin{align}
\int_a^b\int_{\Sigma_t^i}& 2\phi Rc^i(\nu,\nu)d\mu d t=  \int_{\Sigma_a^i} \phi H_i^2 d\mu -  \int_{\Sigma_b^i} \phi H_i^2 d\mu 
\\&+ \int_a^b\int_{\Sigma_t^i}2\phi\frac{|\nabla H_i|^2}{H_i^2}-2\frac{\hat{g}^j(\nabla \phi, \nabla H_i)}{H_i} +\phi(H_i^2-2|A|_i^2) d\mu .
\end{align}

If 
\begin{align}
m_H^{\Hy}(\Sigma^i_T) \rightarrow 0
\end{align} 
and $\Sigma_t$ satisfies the hypotheses of Proposition \ref{avgH} then 
\begin{align}
\int_a^b\int_{\Sigma_t}& \phi Rc^i(\nu,\nu)d\mu d t \rightarrow  \int_a^b \int_{\Sigma} -2r_0^2 e^t \phi d\sigma dt.
\end{align}
If 
\begin{align}
m_H^{\Hy}(\Sigma^i_T)-m_H^{\Hy}(\Sigma^i_0)  \rightarrow 0,&
\\m_H^{\Hy}(\Sigma_T) \rightarrow m > 0,&
\end{align}
 and $\Sigma_t$ satisfies the hypotheses of Proposition \ref{avgH} then
\begin{align}
\int_a^b\int_{\Sigma_t}& \phi Rc^i(\nu,\nu)d\mu d t \rightarrow \int_a^b\int_{\Sigma} -2\left(\frac{1 }{r_0}me^{-t/2} + r_0^2 e^t\right) \phi d\sigma dt.
\end{align}
\end{Lem}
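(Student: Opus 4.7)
The identity is a purely local consequence of the IMCF evolution equations and is insensitive to the sign of scalar curvature or to the ambient model; hence I would derive it exactly as in Lemma \ref{WeakRicciEstimate}. Concretely, using $\partial_t H = \Delta H/H^2 - 2|\nabla H|^2/H^3 - (|A|^2 + Rc(\nu,\nu))/H$ and $\partial_t(d\mu) = d\mu$ under IMCF, compute $\frac{d}{dt}\int_{\Sigma_t^i}\phi H_i^2 d\mu$, integrate by parts spatially to move the Laplacian off $1/H$ onto the test function, then integrate in $t$ on $[a,b]$ and rearrange to isolate the $Rc^i(\nu,\nu)$ spacetime integral.

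For the PMT hyperbolic case ($m_H^{\Hy}(\Sigma_T^i)\to 0$) I would pass to the limit in the identity term by term. Corollary \ref{GoToZeroHyperbolic} (actually the Geroch monotonicity behind it) supplies $\int_0^T\int_{\Sigma_t^i}|\nabla H_i|^2/H_i^2\,d\mu\,dt \to 0$, which kills the $2\phi|\nabla H|^2/H^2$ contribution. Cauchy--Schwarz applied to $-2\hat{g}^i(\nabla\phi,\nabla H_i)/H_i$ against $\|\phi\|_{C^1}$ and $\|\nabla H_i/H_i\|_{L^2}\to 0$ kills the cross term. For the $\phi(H^2 - 2|A|^2)$ term, the pointwise identity $H^2 - 2|A|^2 = -(\lambda_1-\lambda_2)^2$ combined with $\int(\lambda_1^i-\lambda_2^i)^2 d\mu \to 0$ from Corollary \ref{GoToZeroHyperbolic} forces this term to vanish in the limit as well.

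The substantive step is the boundary pair $\int_{\Sigma_a^i}\phi H_i^2 d\mu - \int_{\Sigma_b^i}\phi H_i^2 d\mu$. By Proposition \ref{avgH} (whose hypotheses on $|\nabla H|^2/H^2$, $H$, and $|A|$ are all in force) there is a subsequence along which $H_i(x,t) - \bar H_i(t) \to 0$ pointwise a.e., so with $H_0 \le H_i \le H_1$ and the parameterization $d\mu_t = r_0^2 e^t d\sigma$ from Proposition \ref{avgH}, dominated convergence gives $\int_{\Sigma_t^i}\phi H_i^2 d\mu \to \int_\Sigma \phi(x,t)\,\overline{H^2}_\infty(t)\, r_0^2 e^t d\sigma$ for a.e. $t$. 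The hyperbolic analogue of Lemma \ref{naiveEstimate} identifies $\overline{H^2}_\infty(t) = \frac{4e^{-t}}{r_0^2} + 4$ in the PMT case, so writing the boundary difference as $-\int_a^b \frac{d}{dt}(\cdot)\,dt$ collapses it to $-4\int_a^b\int_\Sigma r_0^2 e^t \phi\, d\sigma\, dt$; dividing by $2$ recovers the announced limit. The RPI branch is structurally identical, with $\overline{H^2}_\infty(t) = \frac{4e^{-t}}{r_0^2}(1 - \tfrac{2m}{r_0}e^{-t/2}) + 4$ from Corollary \ref{GoToZeroHyperbolic}, producing the extra $\frac{m}{r_0}e^{-t/2}$ contribution in the target integrand.

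The main obstacle is the measure-zero issue at the specific times $t=a$ and $t=b$: Proposition \ref{avgH} gives the necessary pointwise convergence of $H_i$ only for a.e. $t$, so to justify the boundary slice evaluations one either restricts $(a,b)$ to a full-measure subset of $[0,T]^2$ and extends the conclusion by continuity in the endpoints, or equivalently mollifies $\phi$ in time so that the boundary contribution is averaged over an $\varepsilon$-window. Once this is handled, every other passage to the limit is a routine dominated-convergence step given the uniform curvature and diameter bounds \eqref{FundBound1}--\eqref{FundBound4}.
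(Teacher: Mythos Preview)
Your proposal is correct and follows the paper's approach: the paper itself defers the proof of this lemma to \cite{BA2,BA3}, and your derivation---differentiate $\int_{\Sigma_t}\phi H^2\,d\mu$ under IMCF, integrate by parts in space, then pass to the limit term by term using Corollary~\ref{GoToZeroHyperbolic}, Proposition~\ref{avgH}, and the hyperbolic form of Lemma~\ref{naiveEstimate}---is exactly the argument behind the Euclidean analogue Lemma~\ref{WeakRicciEstimate}, transported verbatim. Your identification of the a.e.-in-$t$ versus fixed-endpoint issue as the only genuine subtlety is also on target; the paper implicitly handles it the same way (density/continuity in the endpoints), so nothing further is needed.
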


We end this subsection with a lemma which allows us to control the metric on $\Sigma_t^i$ in terms of the metric on $\Sigma_T^i$.

\begin{Lem} \label{metricEst}Assume that $\Sigma_t^i$ is a solution to IMCF and let
\begin{align}
\lambda_1^i(x,t)\le \lambda_2^i(x,t)
\end{align}
 be the eigenvalues of $A^i(x,t)$ then 
\begin{align}
 e^{\int_T^t\frac{2\lambda^i_1(x,s)}{H^i(x,s)}ds} g^i(x,T) \le g^i(x,t) &\le e^{\int_T^t\frac{2\lambda^i_1(x,s)}{H^i(x,s)}ds} g^i(x,T)
\end{align}
\end{Lem}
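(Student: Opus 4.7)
The plan is to view the two-sided inequality pointwise on tangent vectors and reduce to a scalar Gronwall argument. First I would invoke the standard evolution of the induced metric along IMCF, namely
\[
\frac{\partial}{\partial t} g^i_{jk}(x,t) \;=\; \frac{2 A^i_{jk}(x,t)}{H^i(x,t)},
\]
which follows from differentiating the pullback $F_t^* \hat g^i$ along the flow together with the fact that the normal speed is $1/H^i$.

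Next, fix a point $x \in \Sigma$ and a tangent vector $v$ (taken $t$-independent in the fixed parametrization from Proposition~\ref{avgH}), and set $f(t) := g^i(x,t)(v,v) > 0$. The metric evolution yields $f'(t) = 2 A^i(v,v)/H^i$ at $(x,t)$. Since $\lambda_1^i \le \lambda_2^i$ are the eigenvalues of $A^i$ with respect to $g^i$, the Rayleigh quotient bound
\[
\lambda_1^i(x,t)\, g^i(v,v) \;\le\; A^i(v,v) \;\le\; \lambda_2^i(x,t)\, g^i(v,v)
\]
gives the scalar differential inequality
\[
\frac{2\lambda_1^i(x,t)}{H^i(x,t)}\, f(t) \;\le\; f'(t) \;\le\; \frac{2\lambda_2^i(x,t)}{H^i(x,t)}\, f(t).
\]

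Dividing through by the positive quantity $f$ and integrating the resulting inequality for $(\ln f)'$ from $T$ to $t$, I obtain
\[
f(T)\, \exp\!\Bigl(\int_T^t \tfrac{2\lambda_1^i(x,s)}{H^i(x,s)}\,ds\Bigr) \;\le\; f(t) \;\le\; f(T)\, \exp\!\Bigl(\int_T^t \tfrac{2\lambda_2^i(x,s)}{H^i(x,s)}\,ds\Bigr)
\]
for $t \ge T$, with the corresponding reversed-direction statement handled automatically by the sign of the integral when $t \le T$. Because the bound holds for every tangent vector $v$, it is exactly the claimed inequality between $g^i(x,t)$ and $g^i(x,T)$ interpreted as symmetric bilinear forms. (The statement as written in the excerpt has $\lambda_1^i$ on both sides, which I read as a typo for $\lambda_2^i$ on the upper bound.) The argument is essentially routine, the only mild issue being bookkeeping of signs of $t-T$ in the Gronwall step, which the logarithmic form handles automatically.
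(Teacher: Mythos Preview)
Your argument is correct; this is the standard Gronwall approach starting from the metric evolution $\partial_t g = 2A/H$ and passing to $(\ln f)'$ for $f(t)=g(v,v)$. The paper does not actually prove this lemma in-text---it is one of the results quoted from the author's earlier papers \cite{BA2,BA3}---so there is no in-paper proof to compare against, but your method is exactly the expected one, and your reading of the $\lambda_1$/$\lambda_2$ typo is correct (note that in the paper's setting $t\in[0,T]$, so $t\le T$ and the roles of $\lambda_1,\lambda_2$ on the two sides swap accordingly, as you acknowledge).
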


\subsection{New Estimates for $W^{1,2}$ Convergence}\label{subsec: New Estimates}

In this section we prove new estimates which are in particular useful for proving $W^{1,2}$ convergence. This $W^{1,2}$ convergence will be defined with respect to $(\Sigma\times [0,T], \delta)$ and hence we are concerned with derivatives with respect to the polar coordinates defined in \eqref{deltaMetricDef} with the coordinate vectors $\{\partial_0=\partial_t,\partial_1,\partial_2\}$.

We begin by deriving an equation for the evolution of the average of $H$ under IMCF.

\begin{Lem}\label{AvgIntProp}
If we let $\Sigma_t$ be a solution of IMCF and define 
\begin{align}
\bar{H} = \frac{1}{|\Sigma_t|} \int_{\Sigma_t} H d \mu = \dashint_{\Sigma_t}H d \mu 
\end{align}
then
\begin{align}
\frac{d \bar{H}}{d t} &=\dashint_{\Sigma_t}\frac{\partial H}{\partial t} d\mu.
\end{align}
\end{Lem}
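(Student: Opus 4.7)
The plan is to differentiate $\bar H(t) = \frac{1}{|\Sigma_t|}\int_{\Sigma_t} H\,d\mu$ directly using the product rule, and then use two standard evolution identities for IMCF to see that the boundary terms cancel. Specifically, under IMCF with speed $\nu/H$ the induced area form satisfies the pointwise identity $\partial_t (d\mu) = d\mu$ (since the tangential expansion rate is $\mathrm{div}_\Sigma(\nu/H) = 1$), and integrating gives the familiar $|\Sigma_t| = |\Sigma_0|e^t$, hence $\tfrac{d}{dt}|\Sigma_t| = |\Sigma_t|$. Both facts are already used implicitly throughout the previous lemmas in the excerpt, so I would just recall them at the start.

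With these in hand the computation is essentially a one-liner. First I would write
\begin{align*}
\frac{d\bar H}{dt} = -\frac{1}{|\Sigma_t|^2}\frac{d|\Sigma_t|}{dt}\int_{\Sigma_t} H\,d\mu + \frac{1}{|\Sigma_t|}\frac{d}{dt}\int_{\Sigma_t} H\,d\mu,
\end{align*}
and use $\tfrac{d}{dt}|\Sigma_t| = |\Sigma_t|$ to rewrite the first term as $-\bar H$. Next I would differentiate under the integral sign in the second term, using $\partial_t(d\mu) = d\mu$, to obtain
\begin{align*}
\frac{1}{|\Sigma_t|}\frac{d}{dt}\int_{\Sigma_t} H\,d\mu = \dashint_{\Sigma_t}\frac{\partial H}{\partial t}\,d\mu + \bar H.
\end{align*}
Adding the two contributions, the $\pm\bar H$ terms cancel, yielding the claimed identity $\frac{d\bar H}{dt} = \dashint_{\Sigma_t}\frac{\partial H}{\partial t}\,d\mu$.

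There is really no obstacle here; the only thing to be careful about is justifying differentiation under the integral sign and the pointwise evolution of $d\mu$. Since we are working inside the class $\mathcal{M}_{r_0,H_0,I_0}^{T,H_1,A_1}$, which assumes smoothness of the flow together with uniform bounds on $H$ and higher regularity of $A$, there is ample regularity to pass $d/dt$ inside the integral. I would simply state this as an immediate consequence of the smoothness assumption. The entire proof should be short — just the calculation above together with a brief recollection of the two evolution identities for IMCF.
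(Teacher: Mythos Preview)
Your proposal is correct and follows essentially the same approach as the paper: apply the product rule to $|\Sigma_t|^{-1}\int_{\Sigma_t} H\,d\mu$, use $\tfrac{d}{dt}|\Sigma_t|=|\Sigma_t|$ and $\partial_t(d\mu)=d\mu$, and observe that the $\pm\bar H$ terms cancel. The paper's proof is just the compressed one-line version of exactly this computation.
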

\begin{proof} 
\begin{align}
\frac{\partial \bar{H}_i}{\partial t} &= \frac{\partial}{\partial t} \left (|\Sigma_t|^{-1} \int_{\Sigma_t} H_i dV \right ) 
\\&= -|\Sigma_t|^{-1} \int_{\Sigma_t} H_i dV + |\Sigma_t|^{-1} \int_{\Sigma_t} \frac{\partial H_i}{\partial t} dV + |\Sigma_t|^{-1} \int_{\Sigma_t} H_i dV
\\ &=\dashint_{\Sigma_t}\frac{\partial H_i}{\partial t} dV.
\end{align}
\end{proof}

Now we can use Lemma \ref{AvgIntProp} to derive a more specific equation.

\begin{Lem}\label{AvgIntEvol}
If we let $\Sigma_t$ be a solution of IMCF and define 
\begin{align}
\bar{H} = \frac{1}{|\Sigma_t|} \int_{\Sigma_t} H d \mu = \dashint_{\Sigma_t}H d \mu 
\end{align}
then
\begin{align}
\frac{d \bar{H}}{d t} &=-\dashint_{\Sigma_t}\left(\frac{|A|^2 + Rc(\nu,\nu)}{H}\right ) d\mu.
\end{align}
\end{Lem}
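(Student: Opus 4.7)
The plan is to combine the previous lemma with the standard evolution equation of the mean curvature under IMCF, then use the divergence theorem on the closed surface $\Sigma_t$.

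First, recall that for a hypersurface evolving with normal speed $f$, the mean curvature satisfies the evolution equation
\begin{align}
\frac{\partial H}{\partial t} = -\Delta_{\Sigma_t} f - f(|A|^2 + Rc(\nu,\nu)),
\end{align}
where $\Delta_{\Sigma_t}$ is the Laplace--Beltrami operator on $\Sigma_t$. Specializing to IMCF, where $f = 1/H$, we get
\begin{align}
\frac{\partial H}{\partial t} = -\Delta_{\Sigma_t}\!\left(\frac{1}{H}\right) - \frac{|A|^2 + Rc(\nu,\nu)}{H}.
\end{align}

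Next, I would integrate this identity over $\Sigma_t$. Since the leaves $\Sigma_t$ of the IMCF are compact surfaces without boundary (in our setting they are eventually topological spheres by Corollary \ref{GoToZero} and Corollary \ref{GoToZeroHyperbolic}), the divergence theorem yields
\begin{align}
\int_{\Sigma_t}\Delta_{\Sigma_t}\!\left(\frac{1}{H}\right) d\mu = 0,
\end{align}
so that
\begin{align}
\int_{\Sigma_t}\frac{\partial H}{\partial t}\, d\mu = -\int_{\Sigma_t}\frac{|A|^2 + Rc(\nu,\nu)}{H}\, d\mu.
\end{align}

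Finally, I would invoke Lemma \ref{AvgIntProp}, which identifies $\tfrac{d\bar H}{dt}$ with the average of $\partial_t H$. Dividing the displayed identity by $|\Sigma_t|$ thus gives exactly
\begin{align}
\frac{d\bar H}{dt} = \dashint_{\Sigma_t}\frac{\partial H}{\partial t}\, d\mu = -\dashint_{\Sigma_t}\frac{|A|^2 + Rc(\nu,\nu)}{H}\, d\mu,
\end{align}
which is the claim. The only non-routine input is the evolution equation for $H$ under a general normal flow, which is a classical computation; given that, the remaining steps are an application of the previous lemma and the divergence theorem on a closed surface, so I do not anticipate any serious obstacle.
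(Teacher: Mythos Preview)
Your proof is correct and follows essentially the same route as the paper: write down the evolution equation for $H$ under IMCF, integrate (average) over the closed surface $\Sigma_t$ so that the Laplacian term vanishes by the divergence theorem, and then invoke Lemma~\ref{AvgIntProp}. The only superfluous remark is the appeal to Corollaries~\ref{GoToZero} and~\ref{GoToZeroHyperbolic} for closedness of $\Sigma_t$; the leaves are compact without boundary by hypothesis, so the divergence theorem applies directly.
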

\begin{proof}
The evolution equation for $H$ under IMCF is given by
\begin{align}
\frac{\partial H}{\partial t} = -\Delta \left (\frac{1}{H} \right ) -\frac{|A|^2}{H} - \frac{Rc(\nu,\nu)}{H}
\end{align}
and so if we take the average integral of both sides of this equation we find
\begin{align}
\dashint_{\Sigma_t}\frac{\partial H}{\partial t}d\mu = -\dashint_{\Sigma_t}\Delta \left (\frac{1}{H} \right )d\mu -\dashint_{\Sigma_t}\left(\frac{|A|^2}{H} + \frac{Rc(\nu,\nu)}{H}\right )d\mu
\end{align}
and so by the divergence theorem and Lemma \ref{AvgIntProp} we find
\begin{align}
\frac{d }{d t}\dashint_{\Sigma_t}Hd\mu =  -\dashint_{\Sigma_t}\left(\frac{|A|^2}{H} + \frac{Rc(\nu,\nu)}{H}\right )d\mu.
\end{align}
\end{proof}

Similarly, we can derive an equation for the average of $H^2$.

\begin{Lem}\label{AvgIntEvol2}
If we let $\Sigma_t$ be a solution of IMCF and define 
\begin{align}
\bar{H}^2 = \frac{1}{|\Sigma_t|} \int_{\Sigma_t} H^2 d \mu = \dashint_{\Sigma_t}H^2 d \mu 
\end{align}
then
\begin{align}
\frac{d \bar{H}^2}{d t} &=-2\dashint_{\Sigma_t}\left (2\frac{|\nabla H|^2}{H^2} +|A|^2 + Rc(\nu,\nu)\right) d\mu.
\end{align}
\end{Lem}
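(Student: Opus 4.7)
The plan is to mirror the proof of Lemma \ref{AvgIntEvol}, replacing $H$ by $H^2$ throughout. The key fact driving Lemma \ref{AvgIntProp} is that under IMCF the area element satisfies $\partial_t d\mu = d\mu$ (equivalently $|\Sigma_t|=|\Sigma_0|e^t$), and this is insensitive to which function one is averaging. So the first step is to rerun the short calculation in the proof of Lemma \ref{AvgIntProp} with $H^2$ in place of $H$: the $-|\Sigma_t|^{-1}\int H^2 d\mu$ coming from differentiating $|\Sigma_t|^{-1}$ cancels the $+|\Sigma_t|^{-1}\int H^2 d\mu$ coming from $\partial_t d\mu = d\mu$, leaving
\begin{align*}
\frac{d\bar{H}^2}{dt} = \dashint_{\Sigma_t} \frac{\partial(H^2)}{\partial t}\, d\mu.
\end{align*}

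The second step is to apply the chain rule together with the IMCF evolution equation $\partial_t H = -\Delta(1/H) - |A|^2/H - Rc(\nu,\nu)/H$ that was recalled in the proof of Lemma \ref{AvgIntEvol}. This gives the pointwise identity
\begin{align*}
\partial_t(H^2) = 2H\,\partial_t H = -2H\,\Delta(1/H) - 2|A|^2 - 2Rc(\nu,\nu).
\end{align*}
The only term that needs further manipulation is $-2H\,\Delta(1/H)$. Since each $\Sigma_t$ is a closed surface, the divergence theorem gives
\begin{align*}
\int_{\Sigma_t} H\,\Delta(1/H)\, d\mu = -\int_{\Sigma_t}\langle \nabla H, \nabla(1/H)\rangle\, d\mu,
\end{align*}
and substituting $\nabla(1/H) = -\nabla H/H^2$ turns the right-hand side into an integral of $|\nabla H|^2/H^2$.

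Assembling the three pieces and dividing by $|\Sigma_t|$ to pass to averages produces the identity stated in the lemma. The lower bound $H \ge H_0 > 0$ built into the class $\mathcal{M}^{T,H_1,A_1}_{r_0,H_0,I_0}$ guarantees that $1/H$ and $|\nabla H|^2/H^2$ are well defined pointwise, so the integration by parts is unproblematic, and $|A|\in L^\infty$ (via Morrey's inequality applied to the assumed $W^{2,2}$ bound on $A$) controls the other two terms. There is essentially no analytic obstacle here beyond bookkeeping of constants; the lemma is a direct adaptation of Lemma \ref{AvgIntEvol} with $H$ replaced by $H^2$, the only new ingredient being the chain-rule factor $2H$ produced when differentiating $H^2$ in time.
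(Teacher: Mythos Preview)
Your approach is exactly the paper's: compute $\partial_t(H^2)$ from the evolution of $H$ via the chain rule, average using the $H^2$ analogue of Lemma~\ref{AvgIntProp}, and integrate $\int H\,\Delta(1/H)\,d\mu$ by parts to turn it into $\int |\nabla H|^2/H^2\,d\mu$.

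There is one bookkeeping discrepancy you should be aware of. Your pointwise identity $\partial_t(H^2)=-2H\,\Delta(1/H)-2|A|^2-2Rc(\nu,\nu)$ is correct, and after integration by parts it gives
\[
\frac{d\bar{H^2}}{dt}=-2\dashint_{\Sigma_t}\Bigl(\frac{|\nabla H|^2}{H^2}+|A|^2+Rc(\nu,\nu)\Bigr)\,d\mu,
\]
with coefficient $1$, not $2$, on the gradient term. The paper obtains the stated coefficient $2$ by writing the pointwise evolution of $H^2$ with an additional $-2|\nabla H|^2/H^2$ term already present before integrating by parts; that extra term does not arise from the chain rule applied to the standard equation $\partial_t H=-\Delta(1/H)-(|A|^2+Rc(\nu,\nu))/H$. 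So your computation does not reproduce the formula exactly as stated, but your version is the correct one, and the discrepancy is harmless for every subsequent use of this lemma (Corollary~\ref{GoToZero2}, Theorems~\ref{gtog1} and~\ref{g2tog3}) since $\int_{\Sigma_t}|\nabla H|^2/H^2\,d\mu\to 0$ under the hypotheses of all four main theorems.
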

\begin{proof}
The evolution equation for $H$ under IMCF is given by
\begin{align}
\frac{\partial H^2}{\partial t} = -2H\Delta \left (\frac{1}{H} \right )-2\frac{|\nabla H|^2}{H^2} -2|A|^2 -2 Rc(\nu,\nu)
\end{align}
and so if we take the average integral of both sides of this equation we find
\begin{align}
\dashint_{\Sigma_t}\frac{\partial H^2}{\partial t}d\mu = -2\dashint_{\Sigma_t}\left(H\Delta \left (\frac{1}{H} \right ) + \frac{|\nabla H|^2}{H^2}\right)d\mu -2\dashint_{\Sigma_t}\left(|A|^2 + Rc(\nu,\nu)\right )d\mu
\end{align}
and so by integration by parts and Lemma \ref{AvgIntProp} we find
\begin{align}
\frac{d }{d t}\dashint_{\Sigma_t}H^2d\mu =  -2\dashint_{\Sigma_t}\left( 2\frac{|\nabla H|^2}{H^2}+|A|^2 + Rc(\nu,\nu)\right )d\mu.
\end{align}
\end{proof}

We now use the previous lemmas to deduce what the evolution of the average mean curvature must converge to.

\begin{Cor}
\label{GoToZero2}Let $\Sigma^i\subset M^i$ be a compact, connected surface with corresponding solution to IMCF $\Sigma_t^i$. If 
\begin{align}
&m_H(\Sigma_0)\ge0\text{ and }m_H(\Sigma^i_T) \rightarrow 0
\\ \text{ or }&m_H^{\Hy}(\Sigma_0)\ge0\text{ and }m_H^{\Hy}(\Sigma^i_T) \rightarrow 0
\end{align}
  then for almost every $t \in [0,T]$ and almost every $x \in \Sigma_t$ we have that
\begin{align}
\frac{\partial \bar{H}_i^2}{\partial t} &\rightarrow \frac{-4}{r_0^2} e^{-t} \hspace{3cm} \frac{\partial \bar{H}_i}{\partial t} \rightarrow -\frac{e^{-t/2}}{r_0}
\end{align}
If 
\begin{align}
&\left (m_H(\Sigma^i_T)-m_H(\Sigma^i_0) \right ) \rightarrow 0\text{ where }m_H(\Sigma_0) \rightarrow m > 0
\\ \text{ or } &\left (m_H^{\Hy}(\Sigma^i_T)-m_H^{\Hy}(\Sigma^i_0) \right ) \rightarrow 0\text{ where }m_H^{\Hy}(\Sigma_0) \rightarrow m > 0
\end{align}
then for almost every $t \in [0,T]$ and almost every $x\in \Sigma_t$ we have that
\begin{align}
\frac{\partial \bar{H}_i^2}{\partial t} &\rightarrow \frac{-4}{r_0^2}\left (1- \frac{2m}{r_0} e^{-t/2} \right ) e^{-t} \hspace{1cm} \frac{\partial \bar{H}_i}{\partial t} \rightarrow -\frac{1}{r_0}\sqrt{1- \frac{2m}{r_0} e^{-t/2} } e^{-t/2}\hspace{1cm} 
\end{align}
\end{Cor}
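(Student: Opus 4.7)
The plan is to apply Lemmas \ref{AvgIntEvol} and \ref{AvgIntEvol2}, which express $\partial_t \bar{H}_i$ and $\partial_t \bar{H}_i^2$ as averaged integrals of familiar geometric quantities, and then invoke Corollary \ref{GoToZero} (or \ref{GoToZeroHyperbolic}) to evaluate the limits term by term. Since Lemma \ref{naiveEstimate} gives $|\Sigma_t^i| = 4\pi r_0^2 e^t$, an $i$-independent positive function of $t$, converting integrals to averages is harmless: it simply divides the integral limits already supplied by Corollary \ref{GoToZero} by the fixed factor $4\pi r_0^2 e^t$.

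First I would handle $\partial_t \bar{H}_i^2$. Lemma \ref{AvgIntEvol2} expresses this as $-2\dashint_{\Sigma_t^i}(2|\nabla H|^2/H^2 + |A|^2 + Rc(\nu,\nu))\, d\mu$. Corollary \ref{GoToZero} (resp. \ref{GoToZeroHyperbolic}) provides the limits of each of these three integrands in all four asymptotic regimes, and dividing by $|\Sigma_t^i|$ and summing yields exactly the stated limit. The Euclidean versus hyperbolic distinction changes only the value of $\mathcal{H}^2$ and hence the precise numerical limit, not the argument.

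For $\partial_t \bar{H}_i$, Lemma \ref{AvgIntEvol} gives $\partial_t \bar{H}_i = -\dashint_{\Sigma_t^i}(|A|^2 + Rc(\nu,\nu))/H \, d\mu$; the extra difficulty is the $H$ in the denominator. I split
\begin{align*}
\frac{1}{H_i} = \frac{1}{\bar{H}_i} + \frac{\bar{H}_i - H_i}{H_i \bar{H}_i},
\end{align*}
producing a main term $\bar{H}_i^{-1}\dashint(|A|^2 + Rc(\nu,\nu))\, d\mu$ plus an error term. For the main term, Proposition \ref{avgH} combined with the $\bar{H}_i^2$ limit already established yields $\bar{H}_i \to \tfrac{2}{r_0}e^{-t/2}$ in the PMT case (and its analogue in the RPI case), and combining this with the limits in Corollary \ref{GoToZero} yields the claimed limit.

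The main obstacle is controlling the error term. Cauchy-Schwarz bounds it by
\begin{align*}
\frac{1}{H_0^2}\left(\dashint_{\Sigma_t^i}(|A|^2 + Rc(\nu,\nu))^2 d\mu\right)^{1/2} \left(\dashint_{\Sigma_t^i}(H_i-\bar{H}_i)^2 d\mu\right)^{1/2}.
\end{align*}
Proposition \ref{avgH} forces the second factor to $0$ along a subsequence for a.e.\ $t$, and the first is uniformly bounded because the uniform $W^{2,2}$ control on $A$ in the definition of $\mathcal{M}_{r_0,H_0,I_0}^{T,H_1,A_1}$ yields a uniform $L^\infty$ bound on $|A|$ via Morrey's inequality, while \eqref{FundBound1} together with Fubini provides a uniform $L^2$ bound on $Rc(\nu,\nu)$ on a.e.\ slice. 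The error therefore vanishes and the identification of the limit is complete.
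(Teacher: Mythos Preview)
Your approach is essentially the paper's: the paper's entire proof is the single sentence ``This follows by combining Lemma~\ref{AvgIntEvol} and Lemma~\ref{AvgIntEvol2} with Corollary~\ref{GoToZero},'' and your argument is precisely that combination, carried out in detail. In particular, your handling of $\partial_t\bar H_i^2$ is exactly what the paper intends.

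For $\partial_t\bar H_i$ you go further than the paper, which does not spell out how to deal with the extra factor of $1/H$ in Lemma~\ref{AvgIntEvol}. Your decomposition $1/H_i=1/\bar H_i+(\bar H_i-H_i)/(H_i\bar H_i)$ together with Cauchy--Schwarz and Proposition~\ref{avgH} is a clean way to close this gap. Two small remarks: first, to bound the $Rc(\nu,\nu)$ contribution in the error you invoke \eqref{FundBound1}, which is a hypothesis of the main theorems rather than of the Corollary itself; this is harmless since the Corollary is only applied under those hypotheses, but it is worth noting. Second, ``Fubini'' alone does not give a uniform-in-$i$ slice bound on $\|Rc^i(\nu,\nu)\|_{L^2(\Sigma_t)}$; what actually works is the one-dimensional trace/Sobolev estimate coming from the full $W^{1,2}$ bound in \eqref{FundBound1} (or, even more directly, the strong $L^2$ convergence $Rc^i(\nu,\nu)\to 0$ obtained by combining \eqref{FundBound1} with Lemma~\ref{WeakRicciEstimate}, as in the proof of Lemma~\ref{dHdtConvergence}). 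With that adjustment your error term indeed vanishes and the argument is complete.
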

\begin{proof}
This follows by combining Lemma \ref{AvgIntEvol} and Lemma \ref{AvgIntEvol2} with Corollary \ref{GoToZero}.
\end{proof}

In the following lemma we obtain estimates on the derivatives in the $\Sigma$ direction in the coordinate space $\Sigma \times [0,T]$.

\begin{Lem}\label{DerivativesOfMetricEstimates}
We can find the following estimates on the coordinate derivatives of the metric $g$
\begin{align}
\left|Dg(x,T)-Dg(x,t)\right| &\le \int_t^T \frac{|D A|_i}{H_i} + \frac{|A|_i|D H_i|}{H_i^2}  ds
\\\left|Dg(x,T)-e^{t-T}Dg(x,t)\right| &\le \int_t^Te^{s-T} \lp \frac{|D A|_i}{H_i} + \frac{|A|_i|D H_i|}{H_i^2}+  |Dg| \rp ds,
\end{align}
where $D$ is the covariant derivative with respect to $\sigma$ and all norms are taken with respect to $\sigma$.
\end{Lem}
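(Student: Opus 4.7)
The plan is to derive both estimates from the standard IMCF evolution law for the induced metric, namely $\partial_s g_{ij} = 2A_{ij}/H$, together with the fact that the reference round metric $\sigma$ on $\Sigma$ is time-independent, so its Levi-Civita connection $D$ commutes with $\partial_s$. After that, both inequalities are essentially one-line arguments that use the quotient rule applied to $A/H$ and the pointwise $\sigma$-triangle inequality under the integral sign.

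For the first estimate I would integrate the evolution equation from $s=t$ to $s=T$ to obtain
\[
g(x,T) - g(x,t) \;=\; \int_t^T \frac{2A(x,s)}{H(x,s)}\,ds,
\]
apply $D$ to both sides (passing it inside the time integral, which is legitimate because $\sigma$ does not depend on $s$), and use
\[
D\!\left(\frac{A}{H}\right) \;=\; \frac{DA}{H} \;-\; \frac{A\,DH}{H^2}.
\]
Taking pointwise $\sigma$-norms and invoking the triangle inequality for integrals gives the first bound directly, with the factor $2$ absorbed into the convention used for $|A|$ and $|DA|$.

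For the second estimate I would introduce the integrating factor $e^{s-T}$ dictated by the exponential area growth under IMCF, setting $\phi(s) := e^{s-T}\,Dg(x,s)$. Then
\[
\phi'(s) \;=\; e^{s-T}\,Dg(x,s) \;+\; e^{s-T}\,\partial_s Dg(x,s) \;=\; e^{s-T}\!\left( Dg \,+\, D\!\left(\tfrac{2A}{H}\right) \right).
\]
Integrating from $s=t$ to $s=T$, and using $\phi(T)=Dg(x,T)$ together with $\phi(t)=e^{t-T}Dg(x,t)$, yields
\[
Dg(x,T)\,-\,e^{t-T}Dg(x,t) \;=\; \int_t^T e^{s-T}\!\left( Dg \;+\; \frac{2DA}{H} \;-\; \frac{2A\,DH}{H^2}\right) ds,
\]
and the stated inequality follows upon taking pointwise $\sigma$-norms and applying the triangle inequality once more.

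No real obstacle arises: the calculation is essentially routine once one observes the commutation of $D$ and $\partial_s$. The only mild point of care is choosing the integrating factor so that the rescaled quantity $e^{s-T}Dg(x,s)$ is the one whose endpoint values appear on the left-hand side; this is what produces the extra $|Dg|$ term in the integrand of the second bound. Everything else reduces to the product/quotient rule and the pointwise triangle inequality applied to tensor-valued integrals with respect to $\sigma$.
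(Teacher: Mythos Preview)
Your proposal is correct and follows essentially the same route as the paper: differentiate the IMCF evolution $\partial_s g = A/H$ (the paper's convention omits the factor of $2$) in the $\sigma$-directions, use the quotient rule, introduce the integrating factor $e^{s-T}$ for the second estimate, then take $\sigma$-norms and integrate in $s$. The only cosmetic difference is that the paper phrases the commutation of $D$ with $\partial_s$ by working in $\sigma$-normal coordinates at $x$, whereas you invoke it directly.
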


\begin{proof}
We start by taking spatial derivatives of the equation,
\begin{align}
\frac{\partial g_{lm}^i}{\partial t} = \frac{A_{lm}^i}{H_i},
\end{align}
 in normal coordinates with respect to $\sigma$ centered at $x$,
\begin{align}
\frac{\partial}{\partial t}  g_{lm,k} &=  \frac{A_{lm,k}^i}{H_i} - \frac{A_{lm}^i}{H_i^2}H_{i,k}
\\\frac{\partial}{\partial t} (e^{t-T} g_{lm,k}) &= e^{t-T} \lp \frac{A_{lm,k}}{H_i} - \frac{A_{lm}}{H_i^2}H_{i,k}\rp + e^{t-T} g_{lm,k}.
\end{align}
Now by taking norms with respect to $\sigma$ of both sides yields the inequality,
\begin{align}
\left |\frac{\partial}{\partial t}(e^{t-T} Dg)\right|&\le  e^{t-T} \lp \frac{|D A|_i}{H_i} +  \frac{|A|_i|D H_i|}{H_i^2} +  |Dg|\rp, 
\end{align}
where $D$ is the covariant derivative with respect to $\sigma$.

Now to finish up we find,
\begin{align}
&\left|Dg(x,t)-e^{t-T}Dg(x,0)\right| =\left | \int_0^t \frac{\partial}{\partial s}(e^{s-T}Dg)ds  \right |
\\&\le\int_0^t \left | \frac{\partial}{\partial s}(e^{s-T}Dg) \right |ds 
\\&\le \int_0^t e^{s-T} \lp \frac{|D A|_i}{H_i} +  \frac{|A|_i|D H_i|}{H_i^2} +  |Dg|\rp ds,
\end{align}
where all norms are taken with respect to $\sigma$, which yields the second estimate and the first estimate follows similarly.
\end{proof}

In order for the previous lemma to be useful we will need to deduce integral estimates for $|\nabla A|$. The following interpolation inequality will be key which can be found in section 12 of the work of Hamilton \cite{Ha}.

\begin{Lem}\label{InterpolationInequality}
If $T$ is a tensor on $\Sigma$ then there exists a constant $C(n,m)$, independent of the metric and the connection, so that the following estimate holds
\begin{align}
\int_{\Sigma}|\nabla^iT|^2 d \mu \le C \left (\int_{\Sigma}|\nabla^mT|^2 d \mu \right )^{i/m}\left (\int_{\Sigma} |T|^2 \right )^{1-i/m}
\end{align}
for $0\le i \le m$.
\end{Lem}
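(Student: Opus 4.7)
The plan is to prove this as a standard Gagliardo--Nirenberg interpolation on a closed manifold via log-convexity in the order of differentiation. The only ingredients are integration by parts on $\Sigma$ (which is closed: by Corollary~\ref{GoToZero} we may assume $\Sigma_t$ is topologically a sphere) and a purely algebraic trace bound, so no geometric quantity enters the constant. The endpoints $i=0$ and $i=m$ are trivial with $C=1$, so the only content is $0<i<m$.

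The first step is to establish the three-point inequality
\begin{align*}
\|\nabla^i T\|_{L^2}^2 \;\le\; \sqrt{n}\,\|\nabla^{i-1}T\|_{L^2}\,\|\nabla^{i+1}T\|_{L^2} \qquad (1\le i\le m-1).
\end{align*}
Since $\Sigma$ has no boundary, integration by parts gives
\begin{align*}
\int_\Sigma |\nabla^i T|^2\,d\mu \;=\; -\int_\Sigma \la \nabla^{i-1}T,\; \nabla^{*}\nabla^i T\ra\,d\mu,
\end{align*}
where $\nabla^{*}$ is the formal $L^2$-adjoint of $\nabla$. Pointwise $\nabla^{*}\nabla^i T$ is a single $g$-trace of $\nabla^{i+1}T$, so in an orthonormal frame Cauchy--Schwarz in $\R^n$ yields $|\nabla^{*}\nabla^i T|\le \sqrt{n}\,|\nabla^{i+1}T|$; the $L^2$ Cauchy--Schwarz inequality then produces the displayed three-point bound.

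The second step is to extract the interpolation inequality from discrete log-convexity. Setting $a_i := \|\nabla^i T\|_{L^2}$ and supposing for the moment that every $a_i>0$, the three-point inequality rewrites as $\log a_{i+1}+\log a_{i-1}-2\log a_i \ge -\tfrac12\log n$. Consequently the auxiliary sequence $\psi_i := \log a_i + \tfrac14(\log n)\,i^2$ satisfies $\psi_{i+1}+\psi_{i-1}-2\psi_i \ge 0$, i.e. is discretely convex on $\{0,\dots,m\}$. Linear interpolation of $\psi$ between $i=0$ and $i=m$ and exponentiation give
\begin{align*}
a_i \;\le\; n^{i(m-i)/4}\, a_0^{1-i/m}\, a_m^{i/m},
\end{align*}
and squaring yields the claim with the explicit constant $C(n,m) \le n^{m^2/8}$. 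If instead some $a_j=0$ with $0<j<m$, then $\nabla^j T=0$ almost everywhere, forcing $a_i=0$ for all $i\ge j$ (and in particular $a_m=0$); the backward three-point inequality then forces $a_i=0$ for $1\le i\le j$ as well, so both sides of the interpolation inequality vanish for $0<i\le m$ and the conclusion is trivial.

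The only point requiring care, rather than a genuine obstacle, is verifying that the final constant depends only on $n$ and $m$ and not on the metric or connection. This is transparent from the derivation: the one metric-dependent inequality used, $|\nabla^{*}\nabla^i T|\le \sqrt{n}\,|\nabla^{i+1}T|$, has a purely dimensional constant, and the discrete convexity argument in Step 2 converts this into a constant depending only on $n$ and $m$, as required.
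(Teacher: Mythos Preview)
Your argument is correct and is essentially the standard proof: the three-term inequality via integration by parts followed by discrete log-convexity is exactly Hamilton's argument in \S12 of \cite{Ha}. The paper does not supply its own proof of this lemma at all; it simply quotes the result from Hamilton, so there is nothing to compare beyond noting that you have reproduced that source argument faithfully, with an explicit constant $C(n,m)\le n^{m^2/8}$.

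One cosmetic remark: your appeal to Corollary~\ref{GoToZero} to justify that $\Sigma$ is closed is unnecessary and slightly misplaced. The lemma is stated for a tensor on $\Sigma$, and in the paper's setting $\Sigma=\Sigma_t$ is a compact hypersurface without boundary by the standing assumptions in Definition~\ref{IMCFClass}; the eventual spherical topology plays no role here. The integration-by-parts step only needs $\partial\Sigma=\emptyset$, which you have from the outset.
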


We now use this interpolation inequality in combination with the assumptions of Definition \ref{IMCFClass} to show the desired integral convergence of $|D A|$.

\begin{Cor}\label{2ndFundFormDerToZero}
Let $\Sigma^i\subset M^i$ be a compact, connected surface with corresponding solution to IMCF $\Sigma_t^i$ such that
\begin{align}
\|A_i\|_{W^{2,2}(\Sigma\times[0,T])} \le C.
\end{align} 
If 
\begin{align}
&m_H(\Sigma_0)\ge0\text{ and } m_H(\Sigma^i_T) \rightarrow 0
\\ \text{ or }&\left (m_H(\Sigma^i_T)-m_H(\Sigma^i_0) \right ) \rightarrow 0\text{ where }m_H(\Sigma_0) \rightarrow m > 0
\end{align}
 then for almost every $t \in [0,T]$ 
\begin{align}
\int_{\Sigma} |D A_i|^2 r_0^2e^td \sigma \rightarrow 0.
\end{align}
\end{Cor}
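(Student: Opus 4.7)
The plan is to apply Hamilton's interpolation inequality (Lemma \ref{InterpolationInequality}) with $m=2$, $i=1$ to the trace-free part of $A_i$, which is the natural tensor whose $L^2$ norm has already been shown to vanish in the previous subsection. Decompose
\begin{align*}
A_i = \mathring{A}_i + \tfrac{H_i}{2}g_i,
\end{align*}
so that in two dimensions $|\mathring{A}_i|_{g_i}^2 = \tfrac12(\lambda_1^i-\lambda_2^i)^2$, and by Corollary \ref{GoToZero} we have $\int_{\Sigma_t^i}|\mathring{A}_i|^2 d\mu\to 0$ for a.e.\ $t\in[0,T]$.

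Working with the intrinsic Levi-Civita connection $\nabla=\nabla^{g_i}$, the identities $\nabla g_i=0$ and $\operatorname{tr}_{g_i}\nabla\mathring{A}_i=0$ give the pointwise decomposition
\begin{align*}
|\nabla A_i|^2 = |\nabla\mathring{A}_i|^2 + \tfrac12|\nabla H_i|^2
\end{align*}
in 2D (the cross term vanishes because $\operatorname{tr}_{g_i}\nabla\mathring A_i=0$ and $|\nabla H_i\otimes g_i|^2=2|\nabla H_i|^2$). The mean-curvature piece is handled directly by $\int|\nabla H_i|^2 d\mu\le H_1^2\int|\nabla H_i|^2/H_i^2 d\mu\to 0$ via Corollary \ref{GoToZero}. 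For the trace-free piece, Lemma \ref{InterpolationInequality} yields
\begin{align*}
\int_{\Sigma_t^i}|\nabla\mathring{A}_i|^2 d\mu \le C\lp\int_{\Sigma_t^i}|\nabla^2\mathring{A}_i|^2 d\mu\rp^{\!1/2}\lp\int_{\Sigma_t^i}|\mathring{A}_i|^2 d\mu\rp^{\!1/2},
\end{align*}
in which the second factor vanishes in the limit while the first remains bounded for a.e.\ $t$ (after passing to a subsequence), because $\nabla^2\mathring{A}_i$ can be rewritten through $\nabla^2 A_i$, $\nabla^2 H_i$ and lower-order pieces, all of which are controlled by the hypothesis $\|A_i\|_{W^{2,2}(\Sigma\times[0,T])}\le A_1$ together with the uniform bounds of Definition \ref{IMCFClass}.

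Together these estimates yield $\int_{\Sigma_t^i}|\nabla A_i|_{g_i}^2 d\mu_{g_i}\to 0$ for a.e.\ $t$. To obtain the statement in the coordinate connection $D=\nabla^\sigma$, use that $D_kA_{ij}-\nabla_kA_{ij}=(\Gamma^{g_i}-\Gamma^\sigma)_{ki}^{l}A_{lj}+(\Gamma^{g_i}-\Gamma^\sigma)_{kj}^{l}A_{il}$ is a tensorial difference bounded pointwise by $C|g_i^{-1}||Dg_i|_\sigma$. The uniform bi-Lipschitz equivalence of $g_i$ with $r_0^2e^t\sigma$ coming from Lemma \ref{metricEst} and the bounds on $H_0\le H_i\le H_1$, together with an $L^2$ bound on $|Dg_i|$ obtained by integrating Lemma \ref{DerivativesOfMetricEstimates}, then give $|DA_i|_\sigma^2\le C(|\nabla A_i|_{g_i}^2+|Dg_i|_\sigma^2|A_i|^2)$, from which $\int_\Sigma|DA_i|^2 r_0^2e^t d\sigma\to 0$ follows.

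The main obstacle is this final conversion: $|Dg_i|_\sigma$ is governed by Lemma \ref{DerivativesOfMetricEstimates} in terms of $|DA_i|, |DH_i|$, so there is a mild circularity between the conclusion and the hypotheses entering the Christoffel comparison, to be resolved by a short Gronwall-type bootstrap fed by the already-established intrinsic $L^2$ smallness of $\nabla A_i$ and $\nabla H_i$. An alternative that avoids the comparison altogether is to apply Lemma \ref{InterpolationInequality} directly to $A_i - A_\infty$, where $A_\infty=r_0e^{t/2}\sigma$ is the second fundamental form of the round sphere $S^2(r_0e^{t/2})$ in the prototype space; then $DA_\infty=0$ since $A_\infty$ is a time-dependent multiple of $\sigma$, and the $L^2$ convergence of $A_i$ to $A_\infty$ is inherited from Corollary \ref{GoToZero} combined with the $L^2$ convergence of $g_i$ obtained in \cite{BA2, BA3}, making the second factor of the interpolation tend to zero directly.
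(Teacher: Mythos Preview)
Your alternative at the very end is essentially what the paper does: it applies Lemma~\ref{InterpolationInequality} directly to the tensor $T=A_i-\frac{e^{-t/2}}{r_0}g_i$, whose $L^2$ norm tends to zero because the eigenvalues $\lambda_j^i$ converge to $e^{-t/2}/r_0$ (Corollary~\ref{GoToZero}), and whose second derivatives are controlled by the $W^{2,2}$ hypothesis. Since $\nabla^{g_i}g_i=0$, one has $\nabla T=\nabla A_i$ and $\nabla^2 T=\nabla^2 A_i$ immediately, so the interpolation produces $\int_{\Sigma_t^i}|\nabla A_i|^2\,d\mu\to 0$ in one stroke, with no need to split off the trace-free part or to treat $\nabla H_i$ separately. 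Your main route via $\mathring A_i$ is correct but strictly longer, and the extra step of bounding $\int|\nabla^2\mathring A_i|^2$ through $\nabla^2 A_i$, $\nabla^2 H_i$ and lower-order pieces is work the paper avoids.

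The conversion from the intrinsic $\nabla$ to the coordinate $D$ is an issue you flag and the paper also passes over quickly (its last sentence simply appeals to ``the definition of $|\nabla A|$ and $|DA|$'' and the previously established $L^2$ convergence). Your observation that $|Dg_i|$ feeds back into $|DA_i|$ through Lemma~\ref{DerivativesOfMetricEstimates} is correct, but the bootstrap you propose is not really needed once one notices that the Christoffel difference $\Gamma^{g_i}-\Gamma^{\sigma}$ is controlled by $|Dg_i|$, and $|Dg_i|$ is in turn bounded in $L^2$ (uniformly in $i$, not merely small) by the $W^{2,2}$ hypothesis on $A_i$ together with the $C^{1,\alpha}$ control of $g_i(\cdot,T)$ from Lemma~\ref{C1alphaConvergence}; the uniform equivalence of $g_i$ and $r_0^2e^t\sigma$ from Lemma~\ref{metricEst} then lets you pass between the two norms without circularity. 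So the conclusion follows, and your proposal is sound; it just arrives at the paper's one-line argument after a detour.
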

\begin{proof}
Consider the tensor $T=A-\frac{e^{-t/2}}{r_0}g$ and apply Lemma \ref{InterpolationInequality} with $m=2$ and $i=1$ to find
\begin{align}
\int_{\Sigma_t^i}|D A|^2 d \mu &\le C \left (\int_{\Sigma_t^i}|D^2 A|^2 d \mu \right )^{1/2}\left (\int_{\Sigma_t^i} |A-\frac{e^{-t/2}}{r_0}g|^2 \right )^{1/2}
\\&\le C \sqrt{A_2} \left (\int_{\Sigma_t^i} n \max \left\{|\lambda_1-\frac{e^{-t/2}}{r_0}|^2,|\lambda_2-\frac{e^{-t/2}}{r_0}|^2 \right\}\right )^{1/2} \rightarrow 0
\end{align}
where $\lambda_1,\lambda_2$ are the eigenvalues of $A$.
The last convergence result follows from the definition of $|\nabla A|$ and $|DA|$ as well as the assume $L^2$ convergence.
\end{proof}

Now we will prove estimates which allow us to use the evolution equation for $H$ to gain $W^{1,2}$ control on $H$ at the price of assuming $L^2$ or $W^{1,2}$ control on the Ricci curvature.

\begin{Lem}\label{RicEst} Let $\Sigma_t$ be a solution of IMCF such that
\begin{align} 
0 < H_0 \le H(x,t) \le H_1 < \infty
\end{align}
 then 
\begin{align}
&\int_0^T\int_{\Sigma_t} 4|Rc(\nu,\nu)|^2 +8|A|^2|Rc(\nu,\nu)| + 4 |A|^4d\mu dt  + \int_{\Sigma_0} \frac{|\nabla H|^2}{H^2}d \mu 
\\&\ge \int_0^T\int_{\Sigma_t} \left (\frac{\partial H^2}{\partial t}\right )^2 + \frac{(\Delta H^2)^2}{H^4}d\mu dt + \sup_{t \in[0,T]} \int_{\Sigma_t}\frac{|\nabla H|^2}{H^2} d\mu.
\end{align}
If the stronger estimate of the hessian of $H^2$ is needed instead of just the laplacian then the above estimate can be improved to find 
\begin{align}
\int_0^T\int_{\Sigma_t} 4|Rc(\nu,\nu)|^2& +8|A|^2|Rc(\nu,\nu)|+4|A|^4+ |Rc| \frac{|\nabla H^2|^2}{H_1^4}d\mu dt 
\\+ \int_{\Sigma_0} \frac{|\nabla H|^2}{H^2}d \mu  &\ge \int_0^T\int_{\Sigma_t} \left (\frac{\partial H^2}{\partial t}\right )^2 + \frac{|\nabla \nabla H^2|^2}{H_1^4}d\mu dt 
\\&+ \sup_{t \in[0,T]} \int_{\Sigma_t}\frac{|\nabla H|^2}{H^2} d\mu.
\end{align}

\end{Lem}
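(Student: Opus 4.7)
The plan is to derive a parabolic-type $L^2$ energy estimate for $u = H^2$ using the IMCF evolution of $H$. Starting from the standard evolution equation $\partial_t H = -\Delta(1/H) - |A|^2/H - Rc(\nu,\nu)/H$ and multiplying through by $2H$ gives the equality $\partial_t H^2 + 2|A|^2 + 2Rc(\nu,\nu) = -2H\Delta(1/H)$. Using the algebraic identities $\Delta(1/H) = -\Delta H/H^2 + 2|\nabla H|^2/H^3$ together with $\Delta H^2 = 2H\Delta H + 2|\nabla H|^2$, one checks that $-2H\Delta(1/H) = \Delta H^2/H^2 - 6|\nabla H|^2/H^2$, so that $H^2$ satisfies the heat-type identity
\begin{align*}
\partial_t H^2 - \frac{\Delta H^2}{H^2} = -\frac{6|\nabla H|^2}{H^2} - 2|A|^2 - 2Rc(\nu,\nu).
\end{align*}

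Next I would square this pointwise identity and integrate over $\Sigma_t \times [0,T]$. The expansion $(\partial_t H^2 - \Delta H^2/H^2)^2 = (\partial_t H^2)^2 - 2(\partial_t H^2)(\Delta H^2/H^2) + (\Delta H^2/H^2)^2$ isolates the target quantities together with a cross term. That cross term is the heart of the argument: integration by parts in the surface variable, followed by commuting $\partial_t$ past $|\nabla H|^2/H^2$ using $\partial_t g^{ij} = -A^{ij}/H$, $\partial_t(1/H^2) = -2\partial_t H/H^3$ and $\partial_t d\mu = d\mu$, converts $2\int_0^T\!\int_{\Sigma_t} (\partial_t H^2)(\Delta H^2/H^2)\,d\mu dt$ into $-\int_{\Sigma_T}\!|\nabla H|^2/H^2\,d\mu + \int_{\Sigma_0}\!|\nabla H|^2/H^2\,d\mu$ plus error integrals that are products of $\partial_t H$, $\nabla H$, $A$, and powers of $H$. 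Young's inequality combined with $H_0 \le H \le H_1$ and the pointwise bound on $|A|$ coming from $\|A\|_{W^{2,2}(\Sigma\times[0,T])}\le A_1$ via Morrey embedding absorbs each of these error terms, and the inequality $(|A|^2 + Rc(\nu,\nu))^2 \le (|A|^2 + |Rc(\nu,\nu)|)^2$ then produces the precise coefficients $4|Rc(\nu,\nu)|^2 + 8|A|^2|Rc(\nu,\nu)| + 4|A|^4 = 4(|A|^2 + |Rc(\nu,\nu)|)^2$ of the statement. Replacing $T$ by an arbitrary intermediate $t_0 \in [0,T]$ and taking supremum recovers the $\sup_t \int_{\Sigma_t}|\nabla H|^2/H^2\,d\mu$ term on the right hand side.

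For the stronger Hessian estimate, I would invoke the Bochner integral identity on the closed surface $\Sigma_t$, namely $\int_{\Sigma_t}|\nabla^2 H^2|^2 d\mu = \int_{\Sigma_t}(\Delta H^2)^2 d\mu - \int_{\Sigma_t} Rc^{\Sigma_t}(\nabla H^2, \nabla H^2) d\mu$. Since $\Sigma_t$ is two-dimensional, $Rc^{\Sigma_t}$ is proportional to its Gauss curvature, which the Gauss equation bounds by the ambient Ricci plus a term controlled by $|A|^2$; after dividing by $H_1^4$ and moving this Ricci correction to the opposite side, the Laplacian statement upgrades to the Hessian statement with exactly the stated $|Rc||\nabla H^2|^2/H_1^4$ correction. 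The main obstacle will be tracking constants through the cross-term integration by parts precisely enough that the target coefficients $4, 8, 4$ are achieved (rather than constants depending on $H_0, H_1, A_1$); this requires carefully splitting each error term between absorption into the $\sup_t$ energy and absorption into $\int\!\int(\partial_t H^2)^2$, and exploiting that the IMCF equation gives equality at the pointwise level so that all slack is concentrated at the single final inequality where $|A|^2 + Rc(\nu,\nu)$ is replaced by $|A|^2 + |Rc(\nu,\nu)|$.
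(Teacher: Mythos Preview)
Your overall scheme matches the paper's: write the evolution of $H^2$ as a heat-type equation, square it, integrate over $\Sigma_t$, convert the cross term $-2\int (\partial_t H^2)(\Delta H^2)/H^2$ via integration by parts into (essentially) a total time derivative of $\int |\nabla H|^2/H^2$, then integrate in $t$ and take a supremum; for the Hessian version both you and the paper invoke the Ricci commutator/Bochner identity on the closed slice.

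Where you differ from the paper is in the evolution equation itself. You correctly compute
\[
\partial_t H^2-\frac{\Delta H^2}{H^2}=-\frac{6|\nabla H|^2}{H^2}-2|A|^2-2Rc(\nu,\nu),
\]
whereas the paper writes $(\partial_t-\Delta/H^2)H^2=-2|A|^2-2Rc(\nu,\nu)$, silently dropping the gradient term. This omission is precisely why the paper lands on the clean coefficients $4,8,4$: with that term absent, the cross term becomes (up to further typos in the paper involving $|\nabla H^2|$ versus $|\nabla H^2|^2$ and powers of $H$) an exact $\partial_t$ of the gradient energy, and the only inequality used is $|Rc(\nu,\nu)+|A|^2|\le |Rc(\nu,\nu)|+|A|^2$. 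Your version, which keeps the gradient term and then absorbs the resulting errors by Young's inequality, is the honest route, and your closing remark that the obstacle is ``tracking constants precisely enough that $4,8,4$ are achieved rather than constants depending on $H_0,H_1,A_1$'' identifies exactly where the paper is loose. You should not expect to recover those exact constants from a correct computation; what you will get is an inequality with constants $C(H_0,H_1)$ in front of the left-hand terms. That weaker form is entirely sufficient for the only place the lemma is used (the subsequent convergence lemma), since there one only needs the right-hand side to tend to zero once $Rc(\nu,\nu)\to 0$ in $L^2$ and $\int_{\Sigma_0}|\nabla H|^2/H^2\to 0$.
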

\begin{proof}
We start by integrating the square of the following linear PDE for $H^2$
\begin{align}
\left(\partial_t - \frac{\Delta}{H^2} \right )H^2 = -2|A|^2-2Rc(\nu,\nu)
\end{align}
in order to find
\begin{align}
\int_{\Sigma_t}(-2|A|^2-2Rc(\nu,\nu))^2 d \mu &= \int_{\Sigma_t}\left [\left(\partial_t - \frac{\Delta}{H^2} \right )H^2 \right ]^2 d \mu
\end{align}
from which we obtain
\begin{align}
&\int_{\Sigma_t}4|A|^4+8|A|^2|Rc(\nu,\nu)|+ 4|Rc(\nu,\nu))|^2 d \mu 
\\&\ge \int_{\Sigma_t}\left ( \frac{\partial H^2}{\partial t} \right )^2 - 2 \frac{\partial H^2}{\partial t}\frac{\Delta H^2}{H^2} + \frac{(\Delta H^2)^2}{H^4} d \mu
\\&\ge \int_{\Sigma_t}\left ( \frac{\partial H^2}{\partial t} \right )^2 +2\frac{\partial }{\partial t}\nabla_k  H^2 \frac{\nabla^kH^2}{H^2}- 2 \frac{\partial H^2}{\partial t}\frac{|\nabla H^2|^2}{H^3} +\frac{(\Delta H^2)^2}{H_1^4} d \mu 
\\&= \int_{\Sigma_t}\left ( \frac{\partial H^2}{\partial t} \right )^2 + \frac{\partial }{\partial t} \left (\frac{|\nabla H^2|}{H^2} \right ) -\frac{\nabla^kH^2 \nabla_k\nabla ^i\nabla_iH^2}{H_1^4} d \mu 
\\&= \int_{\Sigma_t}\left ( \frac{\partial H^2}{\partial t} \right )^2 + \frac{\partial }{\partial t} \left (\frac{|\nabla H^2|}{H^2} \right ) -\frac{\nabla^kH^2 \nabla_i\nabla ^i\nabla_kH^2}{H_1^4} -\frac{R_{kl} \nabla^kH^2\nabla^lH^2}{H_1^4} d \mu 
\\&= \int_{\Sigma_t}\left ( \frac{\partial H^2}{\partial t} \right )^2 + \frac{\partial }{\partial t} \left (\frac{|\nabla H^2|}{H^2} \right ) +\frac{|\nabla \nabla H^2|^2}{H_1^4} -\frac{Rc(\nabla H^2,\nabla H^2)}{H_1^4} d \mu
\end{align}
So now we integrate from $0$ to $t'$ with respect to $t$, $t' \in [0,T]$, to find
\begin{align}
&\int_0^{t'}\int_{\Sigma_t} 4|Rc(\nu,\nu)|^2+8|A|^2|Rc(\nu,\nu)|+4|A|^4 +  \frac{|Rc||\nabla H^2|^2}{H_1^4}d\mu dt 
\\&\ge \int_0^{t'}\int_{\Sigma_t} 4|Rc(\nu,\nu)|^2 +8|A|^2|Rc(\nu,\nu)|+4|A|^4+  \frac{Rc(\nabla H^2, \nabla H^2)}{H_1^4}d\mu dt 
\\&\ge \int_0^{t'}\int_{\Sigma_t} \left (\frac{\partial H^2}{\partial t}\right )^2 + \frac{|\nabla \nabla H^2|^2}{H_0^4}d\mu dt +  \int_{\Sigma_{t'}}\frac{|\nabla H|^2}{H^2} d\mu - \int_{\Sigma_0}\frac{|\nabla H|^2}{H^2} d\mu
\end{align}
and then taking the $\sup_{t' \in [0,T]}$ of both sides we find the desired estimate. 
\end{proof}

We now show how to use the previous Lemma to show new higher order convergence results.

\begin{Lem}\label{dHdtConvergence}
If $\Sigma_t^i$ is a sequence of IMCF solutions where 
\begin{align}
&m_H(\Sigma_t) \rightarrow 0\text{ as }i \rightarrow \infty,
\\&m_H^{\Hy}(\Sigma_t) \rightarrow 0\text{ as }i \rightarrow \infty,
\\ &m_H(\Sigma_{T}^i)- m_H(\Sigma_{0}^i) \rightarrow 0\text{ and }m_H(\Sigma_t^i)\rightarrow m > 0,
\\ \text{ or }&m_H^{\Hy}(\Sigma_{T}^i)- m_H^{\Hy}(\Sigma_{0}^i) \rightarrow 0\text{ and }m_H^{\Hy}(\Sigma_t^i)\rightarrow m > 0,
\end{align}
and
\begin{align}
 0 < H_0 \le H(x,t) &\le H_1 < \infty, 
\\|A|(x,t) &\le A_0 < \infty,
\\\|Rc^i(\nu,\nu)\|_{W^{1,2}(\Sigma \times [0,T])} &\le C \label{SobolevRicciBound}
\end{align}
then
\begin{align}
\int_0^T\int_{\Sigma_t} \left (\frac{\partial H_i^2}{\partial t}\right )^2-4|A|_i^4 d \mu dt &\rightarrow 0,
 \\ \int_0^T\int_{\Sigma_t} \frac{(\Delta H_i^2)^2}{H_i^4} d \mu dt &\rightarrow 0,
\\ \sup_{t \in[0,T]} \int_{\Sigma_t}\frac{|\nabla H_i|^2}{H_i^2} d\mu  &\rightarrow 0.
\end{align}
\end{Lem}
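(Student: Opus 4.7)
The proof rests on rearranging the inequality of Lemma \ref{RicEst}: subtracting $\int_0^T \int_{\Sigma_t} 4|A|^4 \, d\mu\, dt$ from both sides of the first bound there yields
\begin{align*}
&\int_0^T \int_{\Sigma_t^i} 4|Rc^i(\nu,\nu)|^2 + 8|A^i|^2|Rc^i(\nu,\nu)| \, d\mu \, dt + \int_{\Sigma_0^i} \frac{|\nabla H_i|^2}{H_i^2} \, d\mu \\
&\ge \int_0^T \int_{\Sigma_t^i} \left[\left(\frac{\partial H_i^2}{\partial t}\right)^2 - 4|A^i|^4\right] d\mu \, dt + \int_0^T \int_{\Sigma_t^i} \frac{(\Delta H_i^2)^2}{H_i^4} \, d\mu \, dt + \sup_{t \in [0,T]} \int_{\Sigma_t^i} \frac{|\nabla H_i|^2}{H_i^2} \, d\mu,
\end{align*}
so the task reduces to showing that the LHS converges and that the $\int 4|A|^4$ correction matches it, forcing each of the three RHS summands to vanish separately.

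To control the LHS, first note $\int_{\Sigma_0^i} |\nabla H_i|^2/H_i^2 \, d\mu \to 0$ in all four mass scenarios by Corollary \ref{GoToZero} or its hyperbolic analogue Corollary \ref{GoToZeroHyperbolic}. Next, upgrade the weak convergence of $Rc^i(\nu,\nu)$ from Lemmas \ref{WeakRicciEstimate}/\ref{WeakRicciEstimateHyperbolic} to strong $L^2(\Sigma \times [0,T])$ convergence by combining the uniform $W^{1,2}$ bound \eqref{SobolevRicciBound} with the compact Sobolev embedding $W^{1,2} \hookrightarrow L^2$ (Rellich--Kondrachov): every subsequence has a further strongly $L^2$-convergent subsequence, and the weak limit pins the $L^2$ limit. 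Similarly, combining Corollary \ref{GoToZero} (giving $\int (\lambda_1-\lambda_2)^2 \to 0$), Corollary \ref{2ndFundFormDerToZero} (giving $\int |DA|^2 \to 0$), and the uniform pointwise bound on $|A|$ yields $L^2$ convergence of $|A^i|^2$ to its prototypical value. An application of H\"older then makes each of $\int 4|Rc|^2$, $\int 8|A|^2|Rc|$, and $\int 4|A|^4$ converge to its prototypical limit, so the LHS of the displayed inequality converges.

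The final ingredient is to compute the limit of $\int (\partial_t H_i^2)^2$ directly from the evolution equation $\partial_t H^2 = \Delta H^2/H^2 - 2|A|^2 - 2Rc(\nu,\nu)$. I would use the Lemma \ref{RicEst} inequality to produce an a priori upper bound on $\int (\Delta H_i^2)^2/H_i^4$, then exploit the strong $L^2$ convergences of $|A|^2$ and $Rc(\nu,\nu)$ to identify the limit of $\int (\partial_t H_i^2)^2$, and verify that the Lemma \ref{RicEst} inequality is asymptotically an equality. Since $\int (\Delta H^2)^2/H^4 \ge 0$ and $\sup_t \int |\nabla H|^2/H^2 \ge 0$, each of the three RHS terms is squeezed between $0$ and a quantity tending to $0$, establishing all three conclusions simultaneously.

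\textbf{Main obstacle.} The term $(\partial_t H^2)^2 - 4|A|^4$ is not pointwise non-negative, so Lemma \ref{RicEst} by itself does not force it to zero. The delicate point is the self-consistent bootstrap: the upper bound on $\int (\Delta H^2)^2/H^4$ needed to identify the limit of $\int (\partial_t H^2)^2$ via the PDE is itself extracted from the inequality one ultimately wants to saturate. Making this argument rigorous --- ensuring that the Laplacian and gradient terms in the evolution equation contribute negligibly in the limit while simultaneously being what we conclude vanishes --- is where the main work lies.
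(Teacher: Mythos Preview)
Your rearrangement of Lemma~\ref{RicEst} and your route to $\|Rc^i(\nu,\nu)\|_{L^2}\to 0$ (uniform $W^{1,2}$ bound plus compact embedding plus weak-limit identification via Lemma~\ref{WeakRicciEstimate}) match the paper's proof exactly, as does the H\"older step on $\int|A|^2|Rc(\nu,\nu)|$. The proposal diverges from the paper only at the point you yourself flag as the main obstacle: controlling the sign-indefinite term $\int\bigl[(\partial_t H_i^2)^2-4|A_i|^4\bigr]$.

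The paper does \emph{not} attempt the self-consistent bootstrap you sketch, and as stated your proposal leaves this step genuinely open: the a priori bound on $\int(\Delta H_i^2)^2/H_i^4$ extracted from Lemma~\ref{RicEst} gives only boundedness, not smallness, so feeding it back through the evolution equation does not by itself identify $\lim\int(\partial_t H_i^2)^2$. The paper instead argues that pointwise a.e.\ (on a subsequence)
\[
\liminf_{i\to\infty}\Bigl[\bigl(\partial_t H_i^2\bigr)^2-4|A_i|^4\Bigr]\ \ge\ 0,
\]
citing Corollary~\ref{GoToZero2} together with the a.e.\ convergences of $H_i$, $\lambda_j^i$, and $Rc^i(\nu,\nu)$ already established. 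Since the uniform bound $|A_i|\le A_0$ makes the integrand bounded below, Fatou's Lemma then yields $\liminf_i\int\bigl[(\partial_t H_i^2)^2-4|A_i|^4\bigr]\ge 0$. With the other two right-hand-side terms manifestly nonnegative and the left-hand side tending to zero, all three are squeezed to zero simultaneously. The missing idea in your plan is precisely this pointwise-plus-Fatou step, which dissolves rather than resolves the circularity you identified.
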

\begin{proof}
Notice that by combining \eqref{SobolevRicciBound} with Lemma \ref{WeakRicciEstimate} we find that
\begin{align}
\|Rc^i(\nu,\nu)\|_{L^2(\Sigma \times [0,T])} \rightarrow 0. \label{L2RicciConvergence}
\end{align}
Then we note that Lemma \ref{RicEst} implies
\begin{align}
&\int_0^T\int_{\Sigma_t} 4|Rc^i(\nu,\nu)|^2 +8|A|_i^2|Rc^i(\nu,\nu)| d\mu dt  + \int_{\Sigma_0} \frac{|\nabla H_i|^2}{H_i^2}d \mu 
\\&\ge \int_0^T\int_{\Sigma_t} \left (\frac{\partial H_i^2}{\partial t}\right )^2- 4 |A|_i^4 + \frac{(\Delta H_i^2)^2}{H_i^4}d\mu dt + \sup_{t \in[0,T]} \int_{\Sigma_t}\frac{|\nabla H_i|^2}{H_i^2} d\mu
\end{align}
and \eqref{L2RicciConvergence} implies
\begin{align}
&\int_0^T\int_{\Sigma_t}|A|_i^2|Rc^i(\nu,\nu)| d \mu dt 
\\&\le \lp\int_0^T\int_{\Sigma_t}|A|_i^4 d \mu dt\rp^{1/2} \lp\int_0^T\int_{\Sigma_t}|Rc^i(\nu,\nu)|^2 d \mu dt\rp^{1/2} \rightarrow 0.
\end{align}
Now since $\frac{\partial \bar{H}_i^2}{\partial t} = \dashint_{\Sigma_t}\frac{\partial H_i^2}{\partial t} d \mu$  we notice that Corollary \ref{GoToZero2} implies
\begin{align}
\lim_{i \rightarrow \infty} \left[\left (\frac{\partial H_i^2}{\partial t}\right )^2-4|A|_i^4 \right]\ge 0,\label{aeConvergence}
\end{align}
for a.e. $t \in [0,T]$ and $x \in \Sigma$. Combining \eqref{aeConvergence} with Fatou's Lemma  and the assumption $|A|_i \le A_0$ we find,
\begin{align}
 \int_0^T\int_{\Sigma_t} \left (\frac{\partial H_i^2}{\partial t}\right )^2-4|A|_i^4 d \mu dt &\rightarrow 0,
 \\ \int_0^T\int_{\Sigma_t} \frac{(\Delta H_i^2)^2}{H_i^4} d \mu dt &\rightarrow 0,
 \\ \sup_{t \in[0,T]} \int_{\Sigma_t}\frac{|\nabla H_i|^2}{H_i^2} d\mu  &\rightarrow 0.
\end{align}
\end{proof}

\subsection{Consequences of Rigidity Results} \label{subsec: Rigidity}

In this subsection we use rigidity results for Riemannian manifolds to deduce important consequences for the metric on $\Sigma_t$, $g^i(x,t)$.

\begin{Thm}(Theorem 6.4 of Petersen \cite{P}) \label{rigidity2}
Let $n \ge 2$ $\Lambda, v, D > 0$ and $c \in \R$ be given. There exists $\epsilon=\epsilon(n,\Lambda,v, D)$ such that any $(\Sigma,g)$ satisfying 
\begin{align}
&|Rc| \le \Lambda
\\& vol(\Sigma)\ge v
\\& diam(\Sigma) \le D
\\&  \left (\int_{\Sigma} \|Rm - \lambda g \circ g\|^{n/2} d \mu \right )^{2/n}\le \epsilon
\end{align}
is $C^{1+\alpha}$ close to a constant curvature metric for any $\alpha < 1$.
\end{Thm}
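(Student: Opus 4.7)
The plan is to argue by contradiction using Anderson-style compactness. Suppose the conclusion fails: then for some fixed $\alpha < 1$ there exist $\epsilon_i \rightarrow 0$ and a sequence $(\Sigma_i, g_i)$ satisfying $|Rc_i|\le \Lambda$, $vol(\Sigma_i)\ge v$, $diam(\Sigma_i)\le D$, and the $L^{n/2}$-smallness condition with $\epsilon_i$ in place of $\epsilon$, yet no $g_i$ lies within $C^{1+\alpha}$ distance $1/i$ of any constant curvature metric with sectional curvature $\lambda$.

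First I would invoke Anderson's compactness theorem: a sequence of closed $n$-manifolds with uniformly bounded Ricci curvature, a uniform volume lower bound, and a uniform diameter upper bound admits a subsequence converging in the $C^{1,\alpha'}$-topology (for any $\alpha' < 1$) to a $C^{1+\alpha'}$ Riemannian manifold $(\Sigma_\infty, g_\infty)$. The engine is harmonic coordinates: one proves a uniform lower bound on the harmonic radius in terms of $\Lambda, v, D$, and in harmonic coordinates the identity $\Delta_g g_{ij} = -2(Rc)_{ij} + Q(g,\partial g)$ is elliptic in $g_{ij}$ with bounded right-hand side, yielding uniform $W^{2,p}$ and hence $C^{1,\alpha'}$ estimates on each coordinate chart, which are glued via a standard diagonal/Cheeger-Gromov argument.

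Next I would pass the integral hypothesis to the limit. The functional $g \mapsto \int \|Rm - \lambda g \circ g\|^{n/2} d\mu$ is continuous under $C^{1,\alpha'}$ convergence of $W^{2,p}$ metrics in harmonic coordinates (since $Rm$ is a continuous function of the $W^{2,p}$ components of $g$), so
\begin{equation*}
\int_{\Sigma_\infty} \|Rm_\infty - \lambda g_\infty \circ g_\infty\|^{n/2} d\mu_\infty = 0.
\end{equation*}
Hence $Rm_\infty = \lambda g_\infty \circ g_\infty$ almost everywhere, which forces $Rc_\infty = (n-1)\lambda g_\infty$ weakly; feeding this back into the Ricci equation in harmonic coordinates and elliptically bootstrapping upgrades $g_\infty$ to a smooth metric of constant sectional curvature $\lambda$. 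Then for $i$ large the $C^{1,\alpha'}$ closeness of $g_i$ to $g_\infty$ (after the harmonic-chart diffeomorphisms provided by Anderson's theorem) gives $C^{1+\alpha}$ closeness of $g_i$ to a constant curvature metric for any $\alpha < \alpha'$, contradicting our standing hypothesis.

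The main obstacle I expect is the harmonic-coordinate compactness step: controlling the harmonic radius uniformly from the Ricci, volume, and diameter bounds, and then assembling the local $C^{1,\alpha'}$ estimates into a genuine global $C^{1,\alpha'}$ convergence together with a diffeomorphism from $\Sigma_i$ to $\Sigma_\infty$ with respect to which the metric closeness is measured. This is the technical heart of Anderson's theorem; once it is in hand, the integral curvature pinching step is essentially a soft lower semicontinuity argument followed by elliptic regularity.
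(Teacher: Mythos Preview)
The paper does not prove this theorem at all: it is quoted verbatim as Theorem~6.4 of Petersen~\cite{P} and used as a black box in Lemma~\ref{C1alphaConvergence}. So there is no ``paper's own proof'' to compare against; your proposal is in fact a sketch of Petersen's argument, not of anything the present paper does.

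That said, your sketch is the standard and correct route. The contradiction setup, the appeal to Anderson--Cheeger--Gromov $C^{1,\alpha'}$ precompactness under a two-sided Ricci bound together with volume and diameter bounds, the identification of the limit as a constant-curvature space via the vanishing of the integral pinching quantity, and the elliptic bootstrap in harmonic coordinates are exactly how Petersen proceeds. One small point: you say the functional $g\mapsto \int \|Rm-\lambda g\circ g\|^{n/2}\,d\mu$ is \emph{continuous} under $C^{1,\alpha'}$ convergence; since $Rm$ involves second derivatives of $g$, what you actually have from the harmonic-coordinate $W^{2,p}$ bounds is weak $L^p$ convergence of curvature, giving \emph{lower semicontinuity} of the norm. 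You acknowledge this yourself later (``essentially a soft lower semicontinuity argument''), and lower semicontinuity is all that is needed, so this is only a wording issue rather than a gap.
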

Note that when $n=2$ the Riemann curvature tensor is $Rm = K g \circ g$, where $g \circ g$ represents the Kulkarni-Nomizu product, and so $\|Rm - \lambda g \circ g\|^2 = \|g \circ g\|^2 |K - \lambda|^2 = 2^4|K - \lambda|^2$.

\begin{Lem}\label{C1alphaConvergence}
Let $\Sigma_t^i$ be a sequence of solutions to IMCF such that $\Sigma_T^i$ is a sequence of hypersurfaces such that 
\begin{align}
&m_H(\Sigma_t) \rightarrow 0\text{ as }i \rightarrow \infty,
\\&m_H^{\Hy}(\Sigma_t) \rightarrow 0\text{ as }i \rightarrow \infty,
\\ &m_H(\Sigma_{T}^i)- m_H(\Sigma_{0}^i) \rightarrow 0\text{ and }m_H(\Sigma_t^i)\rightarrow m > 0,
\\ \text{ or }&m_H^{\Hy}(\Sigma_{T}^i)- m_H^{\Hy}(\Sigma_{0}^i) \rightarrow 0\text{ and }m_H^{\Hy}(\Sigma_t^i)\rightarrow m > 0,
\end{align}  
If we assume,
\begin{align}
&\| Rc^i(\nu,\nu)\|_{W^{1,2}(\Sigma\times \{T\})} \le C,
\\&\| R^i\|_{L^2(\Sigma\times \{T\})} \le C,
\\&diam(\Sigma_T^i) \le D \text{ } \forall \text{ } i,
\\&  |K^i| \le C \text{ on } \Sigma_T,
\end{align}
then we find that
\begin{align}
g^i(x,T) \rightarrow r_0^2 e^T\sigma(x)
\end{align}
in $C^{1,\alpha}$, $0 <\alpha < 1$.
\end{Lem}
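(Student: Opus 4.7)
The plan is to apply Petersen's rigidity theorem (Theorem \ref{rigidity2}) to the sequence $(\Sigma_T^i, g^i(T))$ with the target constant sectional curvature $\lambda = \tfrac{1}{r_0^2 e^T}$. This value is forced by combining Gauss--Bonnet with IMCF: by Corollaries \ref{GoToZero} and \ref{GoToZeroHyperbolic} the surface $\Sigma_T^i$ is eventually a topological sphere, so $\int_{\Sigma_T^i} K^i\,d\mu = 4\pi$ for $i$ large, while the area $|\Sigma_T^i| = 4\pi r_0^2 e^T$ is fixed by IMCF; thus the average of $K^i$ already equals $\lambda$.

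The first three hypotheses of Theorem \ref{rigidity2} come almost for free: in dimension two $|Rc| \le \sqrt{2}|K|$, so the assumption $|K^i|\le C$ on $\Sigma_T$ gives the Ricci bound; the area supplies a volume lower bound; and the diameter bound is assumed. Since $\|Rm - \lambda\, g\circ g\|^2 = 16\,|K - \lambda|^2$ in dimension two, Petersen's $L^{n/2}=L^{1}$ smallness condition reduces to showing $\int_{\Sigma_T^i} |K^i - \lambda|\,d\mu \to 0$.

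To verify this last statement I would use the Gauss equation
\begin{align*}
K = \tfrac{1}{2}R - Rc(\nu,\nu) + \tfrac{1}{4}H^2 - \tfrac{1}{4}(\lambda_1 - \lambda_2)^2,
\end{align*}
combined with the analogous decomposition of $\lambda$ in terms of the corresponding prototype quantities $R^{\ast}, Rc^{\ast}(\nu,\nu), H^{\ast}$ on the target round sphere (in Euclidean, Schwarzschild, hyperbolic, or Anti-de~Sitter Schwarzschild space), which are constants producing $K^{\ast} = \lambda$. Subtracting gives
\begin{align*}
\int_{\Sigma_T^i} |K^i - \lambda|\,d\mu &\le \tfrac{1}{2}\int |R^i - R^{\ast}|\,d\mu + \int |Rc^i(\nu,\nu) - Rc^{\ast}(\nu,\nu)|\,d\mu \\
&\quad + \tfrac{1}{4}\int |H_i^2 - (H^{\ast})^2|\,d\mu + \tfrac{1}{4}\int (\lambda_1^i - \lambda_2^i)^2\,d\mu.
\end{align*}
The first and fourth terms go to zero by Corollaries \ref{GoToZero} and \ref{GoToZeroHyperbolic}, using $R^i - R^{\ast} \ge 0$ so that the absolute value matches the signed integral. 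The third term tends to zero by Proposition \ref{avgH} and Lemma \ref{naiveEstimate}: $H^i(x,T) \to H^{\ast}$ pointwise a.e.\ on $\Sigma$ along a subsequence, and the uniform bound $H_0 \le H^i \le H_1$ makes dominated convergence applicable.

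The main obstacle is the Ricci term. My plan is to exploit the hypothesis $\|Rc^i(\nu,\nu)\|_{W^{1,2}(\Sigma_T)} \le C$: after pulling back to the fixed reference sphere via the area-preserving diffeomorphism $F_i(T)$ of Proposition \ref{avgH}, the uniform area, diameter, and sup-curvature bounds yield uniform Sobolev/Poincar\'e constants, so the compact embedding $W^{1,2}\hookrightarrow L^2$ in dimension two produces a strongly $L^2$-convergent subsequence. The weak limit is identified as $Rc^{\ast}(\nu,\nu)$ by testing against functions concentrated near $t=T$ using Lemma \ref{WeakRicciEstimate} (or \ref{WeakRicciEstimateHyperbolic})---these weak integral identities are precisely what pin down the value---giving strong $L^2$, hence $L^1$, convergence. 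Once all four pieces are controlled, Theorem \ref{rigidity2} yields $C^{1,\alpha}$-closeness of $g^i(T)$ to a constant curvature metric, and the spherical topology together with the fixed area $4\pi r_0^2 e^T$ forces this limit to be exactly $r_0^2 e^T\sigma$.
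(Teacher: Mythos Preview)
Your proposal is correct and follows essentially the same strategy as the paper: both arguments upgrade the weak convergence of $Rc^i(\nu,\nu)$ from Lemma~\ref{WeakRicciEstimate} to strong $L^2$ convergence on $\Sigma_T$ via the $W^{1,2}$ bound and compact Sobolev embedding, then use the Gauss equation to drive $\int_{\Sigma_T^i}|K^i-\lambda|\,d\mu\to 0$ and invoke Petersen's Theorem~\ref{rigidity2}. The only cosmetic difference is that the paper writes the Gauss equation as $K=K_{12}+\lambda_1\lambda_2$ and handles the extrinsic piece directly via the pointwise a.e.\ convergence of $\lambda_1^i\lambda_2^i$ from Corollary~\ref{GoToZero}, rather than your equivalent splitting into $H^2$ and $(\lambda_1-\lambda_2)^2$ terms.
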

\begin{proof}
By the assumption that $\|Rc^i(\nu,\nu)\|_{W^{1,2}(\Sigma\times \{T\})} \le C$ we know by Sobolev embedding that a subsequence converges strongly in $L^2(\Sigma\times\{T\})$ with the measure $r_0^2d\sigma$, i.e. 
\begin{align}\label{RicciL2Converges}
\int_{\Sigma}|Rc^j(\nu,\nu)|^2 r_0^2d\sigma \rightarrow k(x,t) \in L^2(\Sigma\times\{T\}).
\end{align}
 By combining \eqref{RicciL2Converges} with Lemma \ref{WeakRicciEstimate} we find that 
 \begin{align}
 \int_{\Sigma}|Rc^j(\nu,\nu)|^2 r_0^2d\sigma  \rightarrow 0.
 \end{align} 
 Then since $R^i$ converges to $0$ in $L^1$ by Corollary \ref{GoToZero} we can combine with the assumed $L^2$ bound and an interpolation inequality to conclude that $R^i$ converges to $0$ in $L^2$.  Then we have that $\int_{\Sigma}|K_{12}^i| d\mu^{\infty}_0  \rightarrow 0$ and hence 
\begin{align}
\int_{\Sigma}|K^i-\frac{1}{r_0^2}|r_0^2d\sigma &=\int_{\Sigma}|K_{12}^i+ \lambda_1^i\lambda_2^i - \frac{1}{r_0^2}|r_0^2d\sigma
\\&\le 2\int_{\Sigma}|K_{12}^i|+|\lambda_1^i\lambda_2^i - \frac{1}{r_0^2}| r_0^2d\sigma \rightarrow 0 \label{Eq-lastEnd1}
\end{align}
where we use the pointwise a.e. convergence of $\lambda_j^i$, $j=1,2$, that is implied by Corollary \ref{GoToZero}, on a subsequence, and the bound $|A|_i\le C$.

This shows that $\int_{\Sigma} |K^i - \frac{1}{r_0^2}| d \mu_0^{\infty} \rightarrow 0$ and hence by combining with the diameter bound diam$(\Sigma_0^i)\le D$ and the pointwise Gauss curvature bound $K^i \le C$ on $\Sigma_T$ then  we can apply the rigidity result of Petersen  \cite{P}, Corollary \ref{rigidity2}, which implies that $|g^i(x,0) - r_0^2\sigma(x)|_{C^{1+\alpha}} \rightarrow 0$ as $i \rightarrow \infty$ where $\alpha < 1$. 

\end{proof}
\begin{rmrk}
By combining Lemma \ref{C1alphaConvergence} with Lemma \ref{metricEst} and the bounds assumed in Definition \ref{IMCFClass} we now have shown that $\hat{g}^i$ has a uniform upper bound. Note that this implies that the metrics $g^i$ and $\sigma$ are uniformly equivalent on $\Sigma_t$ for $t \in [0,T]$ and hence quantities which are converging to zero in coordinates will converge to zero in norm with respect to either metric.
\end{rmrk}

\begin{Thm}(Corollary 1.5 of Petersen-Wei \cite{PW})\label{rigidity}
Given any integer $n \ge 2$, and numbers $p > n/2$, $\lambda \in \R$, $v >0$, $D < \infty$, one can find $\epsilon = \epsilon(n,p,\lambda, D) > 0$ such that a closed Riemannian $n-$manifold $(\Sigma,g)$ with,
\begin{align}
&\text{vol}(\Sigma)\ge v,
\\&\text{diam}(\Sigma) \le D,
\\& \frac{1}{|\Sigma|} \int_{\Sigma} \|Rm - \lambda g \circ g\|^p d \mu \le \epsilon(n,p,\lambda,D),\label{Eq-l2curv}
\end{align}
is $C^{\alpha}$, $\alpha < 2 - \frac{n}{p}$, close to a constant curvature metric on $\Sigma$.
\end{Thm}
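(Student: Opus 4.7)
The statement is cited as Corollary 1.5 of Petersen--Wei, so rather than giving a full proof I would sketch the strategy behind that result. The whole argument is a compactness/contradiction argument built on harmonic coordinate estimates, in the spirit of Anderson--Cheeger but adapted from pointwise to integral curvature bounds.

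The plan is the following. First, I would establish a uniform lower bound on the harmonic radius $r_h(\Sigma,g)$ depending only on $n,p,\lambda,v,D$. This is the main technical step. Using the volume and diameter bounds together with a relative volume comparison theorem for integral curvature (also due to Petersen--Wei), one can run a contradiction/rescaling argument: if $r_h \to 0$ along a sequence, rescale pointed balls where the infimum of $r_h$ is attained so that the new harmonic radius equals $1$. The condition $p > n/2$ is precisely what makes $\frac{1}{|B|}\int_B \|Rm\|^p\,d\mu$ scale to zero under this rescaling (the scaling factor is $r^{2p-n}$). One extracts a pointed $C^{\alpha}$-limit, which is Ricci-flat and Euclidean at infinity, hence flat by a standard Bishop--Gromov argument, contradicting $r_h=1$ on the limit.

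Second, once $r_h \ge r_0 > 0$ uniformly, I would use harmonic coordinates $\{x^i\}$ in which the metric satisfies the elliptic system
\begin{align}
\Delta_g g_{ij} \;=\; -2 R_{ij} + Q_{ij}(g,\partial g),
\end{align}
where $Q_{ij}$ is quadratic in $\partial g$ and controlled once $g$ is close to the Euclidean metric in $C^0$. The $L^p$ bound on $Rm$ combined with standard $W^{2,p}$ elliptic regularity (applicable because $p>n/2$) yields a uniform $W^{2,p}$ bound on $g_{ij}$ in each harmonic chart. Morrey's embedding then promotes this to a $C^{1,\beta}$ bound with $\beta = 1-\tfrac{n}{p}$, and in particular a $C^\alpha$ bound for any $\alpha < 2-\tfrac{n}{p}$.

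Finally, I would close the proof by contradiction. Suppose the conclusion fails: then there is a sequence $(\Sigma_i, g_i)$ with the same $n,p,\lambda,v,D$ whose $L^p$-deviation of $Rm_i$ from $\lambda\, g_i \circ g_i$ tends to zero, but which admits no $C^\alpha$-close constant curvature model. The uniform harmonic radius and $W^{2,p}$ bounds let us pass to a subsequential $C^{\alpha}$-limit $(\Sigma_\infty, g_\infty)$; the $L^p$-smallness of $Rm_i - \lambda g_i \circ g_i$ is preserved weakly in the limit, forcing $Rm_\infty = \lambda\, g_\infty \circ g_\infty$ almost everywhere, so $g_\infty$ has constant sectional curvature $\lambda$. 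For large $i$, $g_i$ is then $C^\alpha$-close to $g_\infty$, contradicting the choice of the sequence. The hardest piece is the first step: proving the harmonic radius lower bound under only an $L^p$ curvature hypothesis, where pointwise curvature comparisons are unavailable and one must substitute them with integral versions of volume comparison and Moser-type iteration.
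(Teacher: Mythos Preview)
The paper does not supply its own proof of this statement; it is quoted verbatim as Corollary~1.5 of Petersen--Wei \cite{PW} and used as a black box in the proof of Lemma~\ref{CalphaConvergence}. Your sketch is a faithful outline of the Petersen--Wei strategy (integral volume comparison, harmonic-radius lower bound by rescaling, $W^{2,p}$ elliptic estimates in harmonic coordinates, and a compactness/contradiction argument), so there is nothing to compare against here.
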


\begin{Lem}\label{CalphaConvergence}
Let $\Sigma_t^i$ be a sequence of solutions to IMCF such that 
\begin{align}
&m_H(\Sigma_t) \rightarrow 0\text{ as }i \rightarrow \infty,
\\&m_H^{\Hy}(\Sigma_t) \rightarrow 0\text{ as }i \rightarrow \infty,
\\ &m_H(\Sigma_{T}^i)- m_H(\Sigma_{0}^i) \rightarrow 0\text{ and }m_H(\Sigma_t^i)\rightarrow m > 0,
\\ \text{ or }&m_H^{\Hy}(\Sigma_{T}^i)- m_H^{\Hy}(\Sigma_{0}^i) \rightarrow 0\text{ and }m_H^{\Hy}(\Sigma_t^i)\rightarrow m > 0,
\end{align}
If we assume,
\begin{align}
\|Rc^i(\nu,\nu)\|_{W^{1,2}(\Sigma\times[0,T])} &\le C,\label{RicciEstC0Proof}
\\\|R^i\|_{L^2(\Sigma\times[0,T])} &\le C,\label{ScalarEstC0Proof}
\\ Diam(\Sigma_t) &\le C \text{ for } t \in [0,T],\label{DiamEstC0Proof}
\end{align}
then on a subsequence,
\begin{align}
g^k(x,t) \rightarrow r_0^2 e^t \sigma(x),
\end{align}
in $C^{\alpha}$ for  a.e. $t \in [0,T]$.
\end{Lem}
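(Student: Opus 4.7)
The plan is to parallel the proof of Lemma \ref{C1alphaConvergence} but upgrade the $L^1$ integral control of the intrinsic Gauss curvature $K^i$ of $\Sigma_t^i$ to $L^p$ control for some $p > 1 = n/2$, so that one can invoke Petersen--Wei (Theorem \ref{rigidity}) in place of Petersen (Theorem \ref{rigidity2}). This is exactly what lets us drop the pointwise Gauss curvature bound on $\Sigma_T$ that was used before, at the cost of weaker ($C^\alpha$ rather than $C^{1,\alpha}$) regularity. Since the area $|\Sigma_t^i| = 4\pi r_0^2 e^t$ is fixed by the area-preserving parameterization and the diameter bound \eqref{DiamEstC0Proof} is an assumption, the only ingredient to produce is $\frac{1}{|\Sigma_t^i|}\int_{\Sigma_t^i} |K^i - r_0^{-2}e^{-t}|^p\, d\mu \to 0$ at almost every $t \in [0,T]$ for some $p > 1$.

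To obtain this, I would decompose $K^i$ via the Gauss equation $K^i = K_{12}^i + \lambda_1^i \lambda_2^i$ together with the identity $K_{12}^i = R^i/2 - Rc^i(\nu,\nu)$. First, the $W^{1,2}(\Sigma \times [0,T])$ hypothesis on $Rc^i(\nu,\nu)$ combined with Rellich compactness extracts a subsequence converging strongly in $L^2(\Sigma \times [0,T])$, and the weak limit is identified by Lemma \ref{WeakRicciEstimate} (or its hyperbolic analogue, Lemma \ref{WeakRicciEstimateHyperbolic}) as the prototype Ricci value $f$; so by Fubini $Rc^i(\nu,\nu)(\cdot,t) \to f(\cdot,t)$ in $L^2(\Sigma)$ along a subsequence at a.e.\ $t$. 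Second, the $L^2(\Sigma\times[0,T])$ bound on $R^i$ together with Corollary \ref{GoToZero} (using $R^i \ge 0$ or $R^i \ge -6$ to upgrade signed-integral convergence to $L^1$ convergence on $\Sigma_t$) yields by H\"older interpolation an $L^p(\Sigma)$ convergence $R^i(\cdot,t) \to R^\infty$ for some $p \in (1,2)$ at a.e.\ $t$. Third, pointwise a.e.\ convergence of $\lambda_1^i, \lambda_2^i$ to the prototype principal curvature (on a subsequence, from Corollary \ref{GoToZero}, Proposition \ref{avgH}, and Corollary \ref{GoToZero2}) combined with $|A|^i \le A_0$ and dominated convergence gives convergence of $\lambda_1^i \lambda_2^i$ in every $L^q$, $q < \infty$. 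Summing via the Gauss equation, the three pieces collapse to the round Gauss curvature $r_0^{-2}e^{-t}$ in each of the four cases; this is the key cancellation that must be checked case-by-case between the Schwarzschild or AdS--Schwarzschild Ricci $f$ and the corresponding $\det A$ of the prototype sphere.

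Once $K^i \to r_0^{-2}e^{-t}$ is established in $L^p$ at a.e.\ $t$ for some $p>1$, and noting that $\|Rm - \lambda\, g\circ g\| = 4|K-\lambda|$ when $n=2$, Theorem \ref{rigidity} applied with $n=2$, $\lambda(t) = r_0^{-2}e^{-t}$, diameter constant from \eqref{DiamEstC0Proof}, and volume lower bound $v = 4\pi r_0^2$ yields that $g^i(\cdot,t)$ is $C^\alpha$-close to a constant curvature metric on $\Sigma$ for some $\alpha \in (0, 2 - 2/p)$; since the volume is fixed at $4\pi r_0^2 e^t$, that constant curvature metric must be $r_0^2 e^t \sigma$. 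The main obstacle is bookkeeping: at each $t$ the subsequence realizing the $L^p$ Gauss-curvature convergence may depend on $t$, and Petersen--Wei's conclusion lives on yet another subsequence. A diagonal extraction over a countable dense set of times, together with the metric comparison of Lemma \ref{metricEst} that controls $g^i(\cdot,t)$ continuously in $t$, should produce a single subsequence realizing the $C^\alpha$ convergence at a.e.\ $t$.
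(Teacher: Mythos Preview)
Your proposal is correct and follows essentially the same route as the paper: decompose $K^i = K_{12}^i + \lambda_1^i\lambda_2^i$ with $K_{12}^i = \tfrac12 R^i - Rc^i(\nu,\nu)$, control each piece in $L^p$ for some $p\in(1,2)$ using the $W^{1,2}$ Ricci bound together with Lemma~\ref{WeakRicciEstimate}, the $L^2$ scalar-curvature bound interpolated against the $L^1$ convergence from Corollary~\ref{GoToZero}, and dominated convergence for $\lambda_1^i\lambda_2^i$, and then apply Petersen--Wei (Theorem~\ref{rigidity}).

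The one place where the paper's argument is slightly cleaner than yours is the subsequence bookkeeping. Rather than working slice-by-slice and then invoking a diagonal extraction plus Lemma~\ref{metricEst}, the paper first establishes the convergence
\[
\int_0^T\!\!\int_{\Sigma}\bigl|K^i - r_0^{-2}e^{-t}\bigr|^p\,d\mu\,dt \to 0
\]
as a single space-time statement (one Rellich extraction suffices for the Ricci term), and only then passes to a subsequence on which the $t$-slice integrals converge for a.e.\ $t$. Since Theorem~\ref{rigidity} is a quantitative $\epsilon$-$\delta$ statement (small $L^p$ curvature defect implies $C^\alpha$ closeness, with no further subsequence needed), this yields the conclusion directly and sidesteps the diagonalization you flag as the main obstacle. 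Your careful remark that the prototype Ricci limit $f$ and the prototype $\det A$ must be checked to cancel to $r_0^{-2}e^{-t}$ in all four cases is a point the paper's proof leaves implicit.
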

\begin{proof}
By \eqref{RicciEstC0Proof} combined with Lemma \ref{WeakRicciEstimate} we find that
\begin{align}
\int_0^T\int_{\Sigma}|Rc^i(\nu,\nu)|^2 d \mu dt \rightarrow 0.
\end{align}
By combining \eqref{ScalarEstC0Proof} with Corollary \ref{GoToZero} and an interpolation inequality we find that
\begin{align}
\int_0^T\int_{\Sigma}|R^i|^p d \mu dt \rightarrow 0
\end{align}
for $1<p<2$ and hence
\begin{align}
\int_0^T\int_{\Sigma}|K_{12}^i|^p d \mu dt \le \int_0^T\int_{\Sigma}|Rc^i(\nu,\nu)|^p+|R^i|^p d \mu dt \rightarrow 0
\end{align}
for $1 < p < 2$. Now we notice that by combining with Corollary \ref{GoToZero} we find
\begin{align}
\int_0^T\int_{\Sigma}|K^i- \frac{1}{r_0^2}|^p d \mu dt \le \int_0^T\int_{\Sigma}|K_{12}^i|^p+|\lambda_1^i\lambda_2^i - \frac{1}{r_0^2}|^p d \mu dt \rightarrow 0
\end{align}
for $1<p<2$.
Now by combining with the rigidity result Theorem \ref{rigidity} and the diameter bound \eqref{DiamEstC0Proof} we find that on a subsequence
\begin{align}
g^k(x,t) \rightarrow r_0^2 e^t \sigma(x)
\end{align}
in $C^{\alpha}$ for a.e. $t \in [0,T]$.
\end{proof}

\section{Convergence To Prototype Spaces} \label{Sect-Conv}

In this section we successively show the pairwise convergence of interpolating metrics in $W^{1,2}$ from $\hat{g}^i(x,t)$ to $\delta$, $g_S$, $g_{\Hy}$, or $g_{ADSS}$. By combining all the pairwise convergence results using the triangle inequality we will be able to prove the main theorems of this work. In this section derivatives with respect to $\{\partial_0 = \partial_t,\partial_1,\partial_2\}$ denote Euclidean covariant derivatives with respect to polar coordinates on $\Sigma \times [0,T]$.

\begin{Thm}\label{gtog1} Let $U_{T}^i \subset M_i^3$ be a sequence such that $U_{T}^i\subset \mathcal{M}_{r_0,H_0,I_0}^{T,H_1,A_1}$ and assume
\begin{align}
&m_H(\Sigma_t) \rightarrow 0\text{ as }i \rightarrow \infty,
\\&m_H^{\Hy}(\Sigma_t) \rightarrow 0\text{ as }i \rightarrow \infty,
\\ &m_H(\Sigma_{T}^i)- m_H(\Sigma_{0}^i) \rightarrow 0\text{ and }m_H(\Sigma_t^i)\rightarrow m > 0,
\\ \text{ or }&m_H^{\Hy}(\Sigma_{T}^i)- m_H^{\Hy}(\Sigma_{0}^i) \rightarrow 0\text{ and }m_H^{\Hy}(\Sigma_t^i)\rightarrow m > 0.
\end{align}
 If we consider \eqref{AmbientMetricDef} and define the metric,
\begin{align}
g^i_1(x,t)&= \frac{1}{\overline{H}_i(t)^2}dt^2 + g^i(x,t),
\end{align}
 on $U_T^i$ then,
\begin{align}
\int_{U_T}|\hat{g}^i -g^i_1|^2 dV+ \sum_{k=0}^2\int_{U_T}|\partial_k \hat{g}^i -\partial_k g^i_1|^2 dV \rightarrow 0, 
\end{align}
 as $i \rightarrow \infty$ where $dV$ is the volume form on $U_T=\Sigma \times [0,T]$ with respect to $\delta$. Where $\{\partial_0 = \partial_t,\partial_1,\partial_2\}$ denotes the derivatives with respect to the coordinates on $\Sigma \times [0,T]$.
\end{Thm}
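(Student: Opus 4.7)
The difference $\hat g^i - g_1^i$ has only a single nonzero component, namely the $dt\otimes dt$ coefficient $\varphi_i := 1/H_i^2 - 1/\bar H_i^2$; every coordinate derivative $\partial_k(\hat g^i - g_1^i)$ for $k = 0,1,2$ likewise picks out only $\partial_k \varphi_i$. So the entire task reduces to proving $\varphi_i \to 0$ in $W^{1,2}(\Sigma\times[0,T], dV_\delta)$, which the plan splits into three pieces: the $L^2$ part, the spatial derivatives, and the genuinely nontrivial time derivative.

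For the $L^2$ piece, factor $\varphi_i = (\bar H_i^2 - H_i^2)/(H_i^2 \bar H_i^2)$ and use the uniform bounds $H_0 \le H_i,\bar H_i \le H_1$ to get $|\varphi_i| \le C|H_i - \bar H_i|$. Then Proposition \ref{avgH}, which gives $\int_{\Sigma_t^i}(H_i - \bar H_i)^2 d\mu \to 0$ for almost every $t$, combined with the uniform pointwise bound on $H_i - \bar H_i$ and dominated convergence in $t$, delivers the $L^2$ convergence on $\Sigma\times[0,T]$. For the spatial derivatives $k=1,2$, note that $\bar H_i$ depends on $t$ only, so $\partial_k \varphi_i = -2\partial_k H_i/H_i^3$ and $|\partial_k \varphi_i|^2 \le C|\partial_k H_i|^2$. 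The remark after Lemma \ref{C1alphaConvergence} supplies a uniform equivalence of $g^i$ with $r_0^2 e^t \sigma$, which allows one to convert coordinate derivatives $|\partial_k H_i|_\sigma$ to the intrinsic gradient $|\nabla H_i|_{g^i}$ with uniform constants; the resulting integral reduces (up to a constant) to $\int_0^T\int_{\Sigma_t^i}|\nabla H_i|^2 d\mu\,dt$, which tends to zero by Lemma \ref{dHdtConvergence} and the lower bound $H_i \ge H_0$.

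For the time derivative, which is where the real work lies, set $u_i = \partial_t H_i^2$ and $\bar u_i = \partial_t \bar H_i^2$ and decompose
\[
\partial_t \varphi_i = -\frac{u_i - \bar u_i}{H_i^4} - \bar u_i\lp \frac{1}{H_i^4} - \frac{1}{\bar H_i^4} \rp.
\]
Corollary \ref{GoToZero2} shows that $\bar u_i$ converges to an explicit bounded limit, so the second summand is controlled by the $L^2$ estimate on $H_i - \bar H_i$ above. For the first summand the plan is to use the evolution equation $(\partial_t - \Delta/H^2)H^2 = -2|A|^2 - 2Rc(\nu,\nu)$ together with its pointwise average from Lemma \ref{AvgIntEvol2} to write
\[
u_i - \bar u_i = -2\lp |A|_i^2 - \overline{|A|_i^2} \rp + E_i,
\]
where $E_i$ collects a $\Delta H_i^2/H_i^2$ piece, a $|\nabla H_i|^2/H_i^2$ piece, and the oscillation of $Rc^i(\nu,\nu)$ about its mean. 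Each contribution to $E_i$ is $L^2$-small: the first two by Lemma \ref{dHdtConvergence}, and the Ricci piece by upgrading the weak convergence in Lemma \ref{WeakRicciEstimate} to strong $L^2$ convergence through the Sobolev bound \eqref{FundBound1}. The leading term is handled by combining Proposition \ref{avgH} (pointwise a.e. convergence of $H_i - \bar H_i$) with Corollary \ref{GoToZero} (which supplies $\int(\lambda_1^i - \lambda_2^i)^2 d\mu \to 0$) to obtain, on a subsequence, pointwise a.e. convergence of both principal curvatures $\lambda_{1,2}^i$ to a common explicit limit; consequently $|A|_i^2$ and $\overline{|A|_i^2}$ both converge pointwise a.e. to the same value, and the uniform bound $|A|_i \le A_0$ upgrades this to $L^2$ convergence via dominated convergence.

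The main obstacle is precisely this time-derivative step: $\partial_t H_i^2$ involves a Laplacian and is not pointwise bounded in $i$, so a direct dominated-convergence argument on $u_i - \bar u_i$ is not available. The strategy is to peel off the unique pointwise-bounded piece $-2|A|_i^2$ via the evolution equation, absorb the remaining singular error $E_i$ into the $L^2$ estimates of Section 2, and only then apply dominated convergence on the principal curvatures to the remaining term.
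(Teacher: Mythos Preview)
Your argument is correct and uses the same toolbox as the paper (the evolution equation for $H$, Lemma~\ref{dHdtConvergence} for the Laplacian and gradient pieces, the $W^{1,2}$ Ricci bound upgraded to strong $L^2$ via Lemma~\ref{WeakRicciEstimate}, and the pointwise a.e.\ convergence of the principal curvatures from Corollary~\ref{GoToZero} and Proposition~\ref{avgH}), but the time-derivative step is organized differently. The paper writes $\partial_0\hat g^i - \partial_0 g_1^i$ in terms of $\bar H_i^3\,\partial_t H_i - H_i^3\,\partial_t\bar H_i$, inserts the explicit limit $-e^{-t/2}/r_0$ as a pivot, and splits by the triangle inequality into three pieces: $\bar H_i^3\bigl(\partial_t H_i + e^{-t/2}/r_0\bigr)$, $(e^{-t/2}/r_0)(\bar H_i^3 - H_i^3)$, and $H_i^3\bigl(e^{-t/2}/r_0 + \partial_t\bar H_i\bigr)$, handling the first via the evolution equation for $H$, the second via Proposition~\ref{avgH}, and the third via Corollary~\ref{GoToZero2} with dominated convergence. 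Your oscillation decomposition $u_i-\bar u_i$ avoids naming the limit until the very end (when you show $|A|_i^2$ and $\overline{|A|_i^2}$ share the same pointwise limit), which is slightly cleaner but ultimately equivalent.

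One point to tidy: in $g_1^i$ the coefficient is $1/\bar H_i(t)^2=(\,\dashint H_i\,)^{-2}$, so the honest $\bar u_i$ in your decomposition is $\partial_t(\bar H_i)^2=2\bar H_i\,\partial_t\bar H_i$, governed by Lemma~\ref{AvgIntEvol}, not $\partial_t\overline{H_i^2}$ from Lemma~\ref{AvgIntEvol2}. The identity $u_i-\bar u_i=-2(|A|_i^2-\overline{|A|_i^2})+E_i$ therefore acquires extra correction terms of the form $\overline{|A|^2(\bar H_i/H_i-1)}$ and $\overline{Rc(\nu,\nu)(\bar H_i/H_i-1)}$; these are harmless (bounded integrands times a factor going to zero a.e., respectively Cauchy--Schwarz against the $L^2$-small Ricci term) and can be absorbed into $E_i$, but you should say so explicitly.
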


\begin{proof}
We have previously shown that,
\begin{align}
\int_{U_T}|\hat{g}^i -g^i_1|^2 dV \rightarrow 0, 
\end{align}
so now we concentrate on the derivative terms. We denote $\partial_t = \partial_0$ and then $\partial_1,\partial_2$ are the two spacial derivatives with respect to $\Sigma$. We can compute,
\begin{align}
\partial_k \hat{g}^i&=\frac{-2}{H_i(x,t)^3} \partial_k H_i(x,t) dt^2 + \partial_kg^i(x,t),
\\ \partial_k \hat{g}_1^i&=\frac{-2}{\overline{H}_i(t)^3} \partial_k \bar{H}_i(t) dt^2 + \partial_kg^i(x,t),
\end{align}
where we notice that we already know that, 
\begin{align}
\sum_{k=1}^2 \int_{U_T}|\partial_k\hat{g}^i -\partial_kg^i_1|^2 dV = \int_{U_T}\frac{|\nabla H_i|^2}{H^3_i} dV \rightarrow 0.
\end{align}

\begin{align}
\int_{U_T}|\partial_0\hat{g}^i -\partial_0g^i_1|^2 dV &= \int_{U_T^i} \left |\frac{2}{H_i(x,t)^3} \partial_0 H_i(x,t) -\frac{2}{\overline{H}_i(t)^3} \partial_0 \bar{H}_i(t) \right |^2 dV
\\&= \int_{U_T} \frac{2}{H_i^6\overline{H}_i^6} \left|\bar{H}^3_i \frac{\partial H_i}{\partial t} - H_i^3\frac{\partial \bar{H}_i}{\partial t} \right |^2dV \label{gtog1dtHInt}
\\&\le \frac{2}{H_0^{12}}\int_{U_T}  \bar{H}^3_i \left| \frac{\partial H_i}{\partial t} + \frac{e^{-t/2}}{r_0} \right |^2dV\label{gtog1FirstEq}
\\& +\frac{2}{H_0^{12}} \int_{U_T}\frac{e^{-t}}{r_0^2}\left|\bar{H}^3_i  - H_i^3 \right |^2dV\label{gtog1SecondEq}
\\& +\frac{2}{H_0^{12}}\int_{U_T} H_i^3\left| \frac{e^{-t/2}}{r_0} + \frac{\partial \bar{H}_i}{\partial t} \right |^2 dV\label{gtog1ThirdEq}
\end{align}

Notice that \eqref{gtog1FirstEq} goes to zero since by the evolution equation for $H$ we find
\begin{align}
&\int_{U_T}  \bar{H}^3_i \left| \frac{\partial H_i}{\partial t} + \frac{e^{-t/2}}{r_0} \right |^2dV 
\\&=\int_{U_T}  \bar{H}^3_i \left| \frac{\Delta H_i}{H_i^2} - \frac{|A|_i^2}{H_i} - \frac{Rc^i(\nu,\nu)}{H_i} + \frac{e^{-t/2}}{r_0} \right |^2dV
\\&\le H_1^3 \int_{U_T}\frac{|\Delta H_i|^2}{H_0^2} +\frac{|Rc(\nu,\nu)|^2}{H_0} + \left|\frac{e^{-t/2}}{r_0}- \frac{|A|_i^2}{H_i}\right|^2 dV
\end{align}
and the last equation goes to zero on a subsequence by Corollary \ref{GoToZero} and Lemma \ref{dHdtConvergence}.

 Then we see that \eqref{gtog1SecondEq} goes to zero by Proposition \ref{avgH}, and \eqref{gtog1ThirdEq} goes to zero since Lemma \ref{AvgIntEvol2} implies that $\frac{d \bar{H}_i}{dt}$ is bounded so we can apply the dominated convergence theorem to Corollary \ref{GoToZero2}. The proof is almost exactly the same in all three other cases where the quantity $\frac{e^{-t/2}}{r_0}$ is replaced with the corresponding quantity for the case being considered.
\end{proof}

\begin{Thm}\label{g1tog2} Let $U_{T}^i \subset M_i^3$ be a sequence such that $U_{T}^i\subset \mathcal{M}_{r_0,H_0,I_0}^{T,H_1,A_1}$ and assume
\begin{align}
&m_H(\Sigma_t) \rightarrow 0\text{ as }i \rightarrow \infty,
\\&m_H^{\Hy}(\Sigma_t) \rightarrow 0\text{ as }i \rightarrow \infty,
\\ &m_H(\Sigma_{T}^i)- m_H(\Sigma_{0}^i) \rightarrow 0\text{ and }m_H(\Sigma_t^i)\rightarrow m > 0,
\\ \text{ or }&m_H^{\Hy}(\Sigma_{T}^i)- m_H^{\Hy}(\Sigma_{0}^i) \rightarrow 0\text{ and }m_H^{\Hy}(\Sigma_t^i)\rightarrow m > 0.
\end{align}
 If we define the metrics, 
\begin{align}
g^i_1(x,t)&= \frac{1}{\overline{H}_i(t)^2}dt^2 + g^i(x,t),
\\g^i_2(x,t)&= \frac{1}{\overline{H}_i(t)^2}dt^2 + e^{t-T}g^i(x,T),
\end{align}
 on $U_T^i$ then we have that,
\begin{align}
\int_{U_T}|g^i_1 -g^i_2|_{g_3^i}^2 dV+ \sum_{k=0}^2\int_{U_T}|\partial_k g_1^i -\partial_k g^i_2|^2 dV \rightarrow 0, 
\end{align}
 as $i \rightarrow \infty$ where $dV$ is the volume form on $U_T=\Sigma \times [0,T]$ with respect to $\delta$.
\end{Thm}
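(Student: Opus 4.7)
The key simplification is that $g_1^i$ and $g_2^i$ share the identical $dt^2$-coefficient $1/\bar{H}_i(t)^2$, so $g_1^i-g_2^i$ only sees the tangential part of the metric:
\begin{align*}
g_1^i - g_2^i &= g^i(x,t) - e^{t-T}g^i(x,T),\\
\partial_t(g_1^i - g_2^i) &= \partial_t g^i(x,t) - e^{t-T}g^i(x,T),\\
\partial_k(g_1^i - g_2^i) &= D_k g^i(x,t) - e^{t-T}D_k g^i(x,T), \quad k=1,2,
\end{align*}
since $\bar{H}_i$ depends only on $t$. The $L^2$-convergence of the zero-order piece was already extracted in \cite{BA2,BA3} from Lemma \ref{metricEst} and Corollary \ref{GoToZero}: the exponents $\int_T^t 2\lambda^i_j/H_i\,ds$ in the metric comparison converge to $t-T$ pointwise a.e.~by $(\lambda_1^i-\lambda_2^i)^2\to 0$ in $L^1$ and $H_i=\lambda_1^i+\lambda_2^i$, so $g^i(x,t)\to e^{t-T}g^i(x,T)$ a.e., and dominated convergence finishes the job via the uniform bounds from the Remark after Lemma \ref{C1alphaConvergence}.

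The time-derivative piece collapses onto the zero-order one. Using the IMCF metric evolution $\partial_t g^i = 2A^i/H_i$ (so that in the umbilic round sphere $\partial_t g = g$), I split
\[
\partial_t g^i(x,t)-e^{t-T}g^i(x,T) = \left(\tfrac{2A^i}{H_i}-g^i(x,t)\right)+\left(g^i(x,t)-e^{t-T}g^i(x,T)\right).
\]
In an orthonormal frame the eigenvalues of $2A^i/H_i - g^i$ are $\pm(\lambda_1^i-\lambda_2^i)/H_i$, so the first summand is bounded pointwise by $C(\lambda_1^i-\lambda_2^i)^2/H_0^2$ and vanishes in $L^2$ by Corollary \ref{GoToZero}, while the second vanishes by the zero-order step above.

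The spatial derivative is the main obstacle. Differentiating the metric evolution in the spatial direction gives $\partial_t Dg^i = 2DA^i/H_i - 2A^i DH_i/H_i^2$ (Lemma \ref{DerivativesOfMetricEstimates}), and introducing the integrating factor $e^{-t}$ then integrating from $T$ to $t$ yields
\[
Dg^i(x,t)-e^{t-T}Dg^i(x,T) = \int_T^t e^{t-s}\,R^i(x,s)\,ds, \qquad R^i := -Dg^i + \tfrac{2DA^i}{H_i}-\tfrac{2A^i DH_i}{H_i^2}.
\]
The purpose of the integrating factor is the algebraic cancellation: decomposing $A^i = (H_i/2)g^i + \phi^i$ into its trace and trace-free parts, with $|\phi^i|^2=(\lambda_1^i-\lambda_2^i)^2/2$, a short computation collapses the bad $Dg^i$ contribution and shows
\[
R^i = \tfrac{2D\phi^i}{H_i}-\tfrac{2\phi^i DH_i}{H_i^2}.
\]
Cauchy--Schwarz in $s$ then reduces the spatial-derivative statement to proving $\int_{U_T}|R^i|^2\,dV\to 0$, which in turn follows from $\int_{U_T}|D\phi^i|^2\,dV\to 0$ together with $\int_{U_T}|\phi^i|^2|DH^i|^2/H_i^4\,dV\to 0$.

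The second of these is immediate, since $|\phi^i|\le C A_0$ is bounded and Lemma \ref{dHdtConvergence} gives $\int_{U_T}|DH^i|^2 dV \le H_1^2 \int_{U_T}|\nabla H^i|^2/H_i^2\,dV\to 0$. For the first, the hardest step, I would further decompose
\[
\phi^i = \left(A^i-\tfrac{e^{-t/2}}{r_0}g^i\right)+\left(\tfrac{e^{-t/2}}{r_0}-\tfrac{H_i}{2}\right)g^i,
\]
replacing the umbilic limit $e^{-t/2}/r_0$ by the corresponding Schwarzschild, hyperbolic, or AdSS value drawn from Corollary \ref{GoToZero} or Corollary \ref{GoToZeroHyperbolic} in the other three cases. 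The derivative of the first bracket vanishes in $L^2$ by the interpolation argument of Corollary \ref{2ndFundFormDerToZero}; the derivative of the second bracket splits into a $DH^i\cdot g^i$ term, again controlled by Lemma \ref{dHdtConvergence}, and a $(\tfrac{e^{-t/2}}{r_0}-H_i/2)Dg^i$ term, controlled via dominated convergence using the pointwise a.e.~convergence $H_i\to 2e^{-t/2}/r_0$ (Proposition \ref{avgH} combined with Lemma \ref{naiveEstimate}) and a uniform bound on $|Dg^i|$ obtained by propagating the $C^{1,\alpha}$-convergence of $g^i(x,T)$ (Lemma \ref{C1alphaConvergence}) back in $t$ through the first estimate of Lemma \ref{DerivativesOfMetricEstimates}.
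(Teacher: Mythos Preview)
Your zero-order and $\partial_t$ arguments are correct and match the paper's: the paper also reduces the time-derivative piece to the eigenvalue comparison $2\lambda_j^i/H_i\to 1$ combined with Lemma~\ref{metricEst}, which is equivalent to your add-and-subtract of $g^i(x,t)$.

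For the spatial derivatives your integrating-factor identity is exactly the content of the second estimate in Lemma~\ref{DerivativesOfMetricEstimates}, and the trace-free cancellation $R^i=2D\phi^i/H_i-2\phi^iDH_i/H_i^2$ is a genuinely nice observation the paper does not make. The gap is in your last step: you claim to control $\bigl(\tfrac{e^{-t/2}}{r_0}-\tfrac{H_i}{2}\bigr)Dg^i$ by dominated convergence, using a ``uniform bound on $|Dg^i|$'' propagated back from $t=T$ via the first estimate of Lemma~\ref{DerivativesOfMetricEstimates}. That propagation only yields an $L^2$ bound on $Dg^i$, not an $L^\infty$ one---the right-hand side involves $|DA^i|$, which the $W^{2,2}$ hypothesis on $A^i$ does not bound pointwise in dimension~$3$. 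And an $L^\infty$-bounded coefficient converging a.e.\ to zero, multiplied by a sequence that is merely bounded in $L^2$, need not tend to zero in $L^2$.

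The paper's route is shorter and also patches your gap. From the first estimate of Lemma~\ref{DerivativesOfMetricEstimates}, Cauchy--Schwarz in $s$, and $|Dg^i(x,T)|\to 0$ in $C^0$ (Lemma~\ref{C1alphaConvergence}), one gets directly
\[
\int_{U_T}|Dg^i(x,t)|^2\,dV \;\le\; C\int_{U_T}|Dg^i(x,T)|^2\,dV + C\int_{U_T}\Bigl(\frac{|DA^i|^2}{H_0^2}+\frac{|A^i|^2|DH^i|^2}{H_0^4}\Bigr)dV \;\longrightarrow\; 0,
\]
using Corollary~\ref{2ndFundFormDerToZero} and Lemma~\ref{dHdtConvergence}. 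Once $Dg^i\to 0$ in $L^2$ you can either feed it into the second estimate of Lemma~\ref{DerivativesOfMetricEstimates} (the paper's choice) or into your $R^i$ directly, since $R^i=-Dg^i+2DA^i/H_i-2A^iDH_i/H_i^2$ term-by-term tends to zero in $L^2$; either way the trace-free decomposition of $\phi^i$ becomes unnecessary.
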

\begin{proof}
We already have that
\begin{align}
\int_{U_T^i}&|g^i_1 -g^i_2|^2 dV  \rightarrow 0,\label{Eq-lastg1tog2}
\end{align}
and we can compute that,
\begin{align}
&\int_{U_T}|\partial_0g^i_1 -\partial_0 g^i_2|^2 dV =  \int_{U_T}|\frac{\partial g^i}{\partial t}(x,t) -e^{t-T}g^i(x,T)|^2 dV
\\&=\int_{U_T}|\frac{2A^i}{H_i} -e^{t-T}g^i(x,T)|^2 dV
\\&\le \int_{U_T}  \max_{j=1,2} \left \{\left|\frac{2\lambda_j^i(x,t)g^i(x,t)}{H_i} -e^{t-T}g^i(x,T)\right|^2  \right \}dV
\\&\le \int_{U_T}|g^i(x,T)|^2  \max_{j=1,2} \left \{\left|\frac{2\lambda_j^i(x,t)}{H_i}e^{\int_T^t\frac{2\lambda^i_j(x,s)}{H^i(x,s)}ds} -e^{t-T}\right|^2  \right \}dV,\label{LastEqMetricComparison}
\end{align}
where $\lambda_1^i(x,t), \lambda_2^i(x,t)$ are the smallest and largest eigenvalue of $A^i(x,t)$, respectively. Note by combining Lemma \ref{metricEst} with Lemma \ref{GoToZero} we find that \eqref{LastEqMetricComparison} goes to zero on a subsequence. 

Now we calculate for $k=1,2$ 
\begin{align}
\int_{U_T^i}|\partial_k g^i_1 -\partial_k g^i_2|^2 dV &=  \int_{U_T^i}|\partial_k g^i(x,t) -e^{t-T}\partial_k g^i(x,T)|^2 dV. \label{MetricDerivatives}
\end{align}
Note that by Lemma \ref{C1alphaConvergence} we have that 
\begin{align}
|\partial_kg^i(x,T)| \rightarrow 0,
\end{align} 
and hence by combining Lemma \ref{metricEst}, Lemma \ref{DerivativesOfMetricEstimates}, and Lemma \ref{2ndFundFormDerToZero} we find that 
\begin{align}
\int_0^T\int_{\Sigma}&|\partial_kg^i(x,t)|r_0^2 e^td \mu dt 
\\&\le \int_0^T \int_{\Sigma}\int_0^t \frac{|DA|_i}{H_i} + \frac{|A|_i|DH|_i}{H_i^2} ds r_0^2 e^td \mu dt 
\\& = \int_0^T r_0^2 e^t \int_0^t \int_{\Sigma} \frac{|DA|_i}{H_i} + \frac{|A|_i|DH|_i}{H_i^2}d \mu ds  dt\rightarrow 0.
\end{align} 
Then by applying the result of Lemma \ref{DerivativesOfMetricEstimates} again in a similar way we find that \eqref{MetricDerivatives} goes to zero, as desired. 

We can get rid of the need for a subsequence by assuming to the contrary that for $\epsilon > 0$ there exists a subsequence so that $\int_{U_T^k}|g_1^k -g^k_2|^2 dV \ge \epsilon$, but this subsequence satisfies the hypotheses of Theorem \ref{g1tog2} and hence by what we have just shown we know a subsequence must converge which is a contradiction.
\end{proof}

Now we want to use the fact that we know that the average of the mean curvature is converging to that of a sphere in euclidean space in order to complete the convergence to the warped product $g_3^i$.

\begin{Thm}\label{g2tog3} Let $U_{T}^i \subset M_i^3$ be a sequence such that $U_{T}^i\subset \mathcal{M}_{r_0,H_0,I_0}^{T,H_1,A_1}$ and $m_H(\Sigma_{T}^i) \rightarrow 0$ as $i \rightarrow \infty$. If we define the metrics, \begin{align}
g^i_2(x,t)&= \frac{1}{\bar{H}^i(t)^2}dt^2 + e^{t-T}g^i(x,T),
\\g^i_3(x,t)&= \frac{r_0^2}{4}e^tdt^2 + e^tg^i(x,0),\label{g3Def1}
\end{align}
 on $U_T^i$ then we have that,
\begin{align}
\int_{U_T}|g^i_2 -g^i_3|^2 dV + \sum_{k=0}^2\int_{U_T}|\partial_k g^i_2 -\partial_k g^i_3|^2 dV\rightarrow 0, 
\end{align}
 as $i \rightarrow \infty$ where $dV$ is the volume form on $U_T=\Sigma\times [0,T]$ with respect to $\delta$.

Instead, if $m_H(\Sigma_{T}^i)- m_H(\Sigma_{0}^i) \rightarrow 0$ and $m_H(\Sigma_t^i)\rightarrow m > 0$ and we define, 
\begin{align}
g^i_3(x,t)= \frac{r_0^2}{4}\left (1 - \frac{2}{r_0} m e^{-t/2} \right )^{-1} e^t dt^2 + e^{t-T}g^i(x,T),\label{g3Def2}
\end{align} 
on $U_T^i$ then we have that,
\begin{align}
\int_{U_T}|g^i_2 -g^i_3|^2 dV + \sum_{k=0}^2\int_{U_T}|\partial_k g_2^i -\partial_k g^i_3|^2 dV\rightarrow 0, 
\end{align}
 as $i \rightarrow \infty$ where $dV$ is the volume form on $U_T=\Sigma\times [0,T]$ with respect to $\delta$.

If $m_H^{\Hy}(\Sigma_{T}^i) \rightarrow 0$ as $i \rightarrow \infty$. If we define the metrics, 
\begin{align}
g^i_3(x,t)&= \frac{1}{4}\left(1+ \frac{e^{-t}}{r_0^2} \right)^{-1}dt^2 + e^{t-T}g^i(x,T),\label{g3Def3}
\end{align}
 on $U_T^i$ then we have that,
\begin{align}
\int_{U_T}|g^i_2 -g^i_3|^2 dV + \sum_{k=0}^2\int_{U_T}|\partial_k g^i_2 -\partial_k g^i_3|^2 dV\rightarrow 0 ,
\end{align}
as $i \rightarrow \infty$ where $dV$ is the volume form on $U_T=\Sigma\times [0,T]$ with respect to $\delta$.

Instead, if $m_H^{\Hy}(\Sigma_{T}^i)- m_H^{\Hy}(\Sigma_{0}^i) \rightarrow 0$ and $m_H^{\Hy}(\Sigma_t^i)\rightarrow m > 0$ and we define,
\begin{align}
g^i_3(x,t)= \frac{1}{4}\left (\frac{1}{r_0^2}e^{-t} - \frac{2}{r_0^3} m e^{-3t/2}+1 \right )^{-1} dt^2 + e^{t-T}g^i(x,T),\label{g3Def4}
\end{align} 
on $U_T^i$ then we have that,
\begin{align}
\int_{U_T}|g^i_2 -g^i_3|^2 dV + \sum_{k=0}^2\int_{U_T}|\partial_k g^i_2 -\partial_k g^i_3|^2 dV\rightarrow 0 ,
\end{align}
as $i \rightarrow \infty$ where $dV$ is the volume form on $U_T=\Sigma\times [0,T]$ with respect to $\delta$.

\end{Thm}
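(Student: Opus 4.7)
The $L^2$ convergence $\int_{U_T}|g_2^i-g_3^i|^2\,dV \to 0$ is already established in the author's previous work on $L^2$ stability, so the task is to upgrade to $W^{1,2}$ by proving $\sum_{k=0}^{2}\int_{U_T}|\partial_k g_2^i-\partial_k g_3^i|^2\,dV \to 0$ in each of the four cases.

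For the spatial derivatives $\partial_k$ with $k=1,2$ the situation simplifies enormously by inspection of the four definitions of $g_3^i$: in the RPI--Euclidean, PMT--hyperbolic, and RPI--hyperbolic cases the spatial tensor parts of $g_2^i$ and $g_3^i$ are identical (both equal $e^{t-T}g^i(x,T)$) while the $dt^2$ coefficients depend only on $t$, so $\partial_k(g_2^i-g_3^i)\equiv 0$ pointwise. Only in the PMT--Euclidean case is the spatial part different ($e^t g^i(x,0)$ versus $e^{t-T}g^i(x,T)$). For that case I would invoke Lemma \ref{DerivativesOfMetricEstimates} to control the discrepancy $e^{t-T}\partial_k g^i(x,T) - e^t\partial_k g^i(x,0)$ pointwise in $t$, combining it with Corollary \ref{2ndFundFormDerToZero} ($L^2$ decay of $|DA|_i$) and Lemma \ref{dHdtConvergence} ($L^2$ decay of $|\nabla H_i|/H_i$) to conclude that the integral vanishes in the limit.

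For the time derivative $\partial_0$, split $\partial_0(g_2^i-g_3^i)$ into a $dt^2$-coefficient term (a function of $t$ only) and a spatial-tensor term. For the coefficient term, Lemma \ref{AvgIntEvol} yields the clean identity
\begin{equation*}
\partial_t\!\left(\frac{1}{\bar{H}^i(t)^2}\right) = \frac{2}{\bar{H}^i(t)^3}\dashint_{\Sigma_t^i}\frac{|A|^2+Rc(\nu,\nu)}{H}\,d\mu,
\end{equation*}
and substituting the limits of $\dashint|A|^2\,d\mu$ and $\dashint Rc(\nu,\nu)\,d\mu$ from Corollary \ref{GoToZero} (or its hyperbolic analogue) together with $H\to H_\infty$ pointwise from Proposition \ref{avgH} produces pointwise convergence to exactly the $\partial_t$ of the target coefficient in each of the four prototype metrics. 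Upgrading to $L^2([0,T])$ convergence follows by dominated convergence using the uniform lower bound $\bar{H}^i\ge H_0$, the pointwise bound $|A|\le C$ (from $\|A\|_{W^{2,2}}\le A_1$ via Morrey's embedding), and the $L^2$ control on $Rc(\nu,\nu)$ furnished by Lemma \ref{WeakRicciEstimate}. For the spatial-tensor part of $\partial_0(g_2^i-g_3^i)$, in the three cases with matching spatial parts this vanishes identically, while in PMT--Euclidean it reduces to $e^t[e^{-T}g^i(x,T)-g^i(x,0)]$; the $L^2(U_T)$ norm of this tends to zero by Lemma \ref{metricEst} combined with the pointwise convergence $2\lambda_j^i/H_i\to 1$ from Corollary \ref{GoToZero} (which forces the exponential factor $e^{\int_T^0 2\lambda_j^i/H_i\,ds}$ to tend to $e^{-T}$) and the $C^{1,\alpha}$ control on $g^i(x,T)$ from Lemma \ref{C1alphaConvergence}.

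The main obstacle is the $L^2([0,T])$ convergence of the $dt^2$-coefficient derivative: the pointwise a.e.\ statement is essentially algebraic once one has the averaged limits, but upgrading to $L^2$ requires the uniform bounds described above to feed into dominated convergence. Once that hurdle is cleared, the three ingredients --- trivial spatial-derivative match in three cases, Lemma \ref{DerivativesOfMetricEstimates}-based spatial-derivative control in PMT--Euclidean, and $dt^2$-coefficient control via Lemma \ref{AvgIntEvol} --- combine to yield $W^{1,2}$ convergence $g_2^i\to g_3^i$ in all four cases. A subsequence removal by contradiction, exactly as at the end of the proof of Theorem \ref{g1tog2}, disposes of any subsequential steps.
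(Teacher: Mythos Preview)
Your proposal follows essentially the same route as the paper's own proof: both note that the $L^2$ part was established previously, reduce the derivative analysis to the $\partial_t$ of the $dt^2$ coefficient, obtain pointwise a.e.\ convergence of $\partial_t(1/\bar H_i^2)$ to the prototype value (the paper packages this as Corollary~\ref{GoToZero2}, you unpack it via Lemma~\ref{AvgIntEvol} and Corollary~\ref{GoToZero}---these are the same computation), pass to $L^2$ by dominated convergence using the uniform lower bound on $\bar H_i$ and boundedness of the averaged right-hand side, and finish with the subsequence-by-contradiction trick.

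Two small remarks. First, you are more careful than the paper about the spatial derivatives in the PMT--Euclidean case: the paper's proof writes $\partial_0 g_3^i$ with spatial part $e^{t-T}g^i(x,T)$ and declares that ``the only coordinate derivatives we need to consider are with respect to $t$'', which is consistent with the other three definitions of $g_3^i$ but not with \eqref{g3Def1} as stated. The evidence (the form of the proof, the parallel with the other three cases, and the subsequent Theorem~\ref{g3todelta}) strongly suggests \eqref{g3Def1} is a typo and the intended spatial part is $e^{t-T}g^i(x,T)$, in which case your extra argument via Lemma~\ref{DerivativesOfMetricEstimates} is unnecessary. Taking the statement literally as you did is defensible, and your argument for that case is sound.

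Second, a minor mis-citation: you attribute ``$L^2$ control on $Rc(\nu,\nu)$'' to Lemma~\ref{WeakRicciEstimate}, but that lemma only gives weak convergence against test functions. The $L^2$ bound you need for dominated convergence comes instead from the standing hypothesis $\|Rc^i(\nu,\nu)\|_{W^{1,2}}\le C$ of the main theorems (implicitly carried here). This does not affect the validity of your argument.
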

\begin{proof}
We have previously shown,
\begin{align}
\int_{U_T}|\hat{g}_2^i -g^i_3|^2 dV   \rightarrow 0, \label{Eq-last1}
\end{align}
and we note that the only coordinate derivatives we need to consider in this case are with respect to $t$,
\begin{align}
\partial_0 g_2^i&=\frac{-2}{\bar{H}_i(t)^3} \frac{\partial \bar{H}_i}{\partial t}(t) dt^2 + e^{t-T}g^i(x,T),
\\ \partial_0 g_3^i &=\frac{r_0^2}{4}e^t dt^2 + e^{t-T}g^i(x,T),
\end{align}
and so we compute
\begin{align}
\int_{U_T}|\partial_0\hat{g}_2^i -\partial_0g^i_3|^2 dV   &=\int_{U_T}\left |\frac{r_0^2}{4}e^t +\frac{2}{\bar{H}_i(t)^3} \frac{\partial \bar{H_i}}{\partial t}(x,t) \right|^2\rightarrow 0,\label{g2tog3dtavgHEst}
\end{align}
which follows from applying Corollary \ref{GoToZero2} and Lemma \ref{naiveEstimate} which give pointwise almost everywhere convergence of the integrand in  \eqref{g2tog3dtavgHEst} on a subsequence. Notice that Lemma \ref{AvgHEst} also implies that $\frac{d\bar{H}^2}{dt}$ is bounded and hence we can use the dominated convergence theorem to conclude that the integral in \eqref{g2tog3dtavgHEst} is $\rightarrow 0$.  The proof is almost exactly the same in all three other cases where the quantity $\frac{r_0^2}{4}e^t$ is replaced with the corresponding quantity for the case being considered.

We can get rid of the need for a subsequence by assuming to the contrary that for $\epsilon > 0$ there exists a subsequence so that $\int_{U_T}|g_2^k -g^k_3|^2 dV \ge \epsilon$, but this subsequence satisfies the hypotheses of Theorem \ref{g2tog3} and hence by what we have just shown we know a subsequence must converge which is a contradiction.
\end{proof}

We now finish by completing the $W^{1,2}$ convergence to the appropriate prototype spaces.

\begin{Thm}\label{g3todelta} 
Let $U_{T}^i \subset M_i^3$ be a sequence such that $U_{T}^i\subset \mathcal{M}_{r_0,H_0,I_0}^{T,H_1,A_1}$ and $m_H(\Sigma_{T}^i) \rightarrow 0$ as $i \rightarrow \infty$. Considering the metric \eqref{g3Def1} on $U_T^i$ then, 
\begin{align}
\int_{U_T}|g^i_3 -\delta|^2 dV + \sum_{k=0}^2\int_{U_T}|\partial_k g^i_3 -\partial_k \delta|^2 dV\rightarrow 0,
\end{align}
 as $i \rightarrow \infty$ where $dV$ is the volume form on $U_T=\Sigma\times [0,T]$ with respect to $\delta$.

Instead, if $m_H(\Sigma_{T}^i)- m_H(\Sigma_{0}^i) \rightarrow 0$ and $m_H(\Sigma_t^i)\rightarrow m > 0$ and we consider \eqref{g3Def2}
on $U_T^i$ then,
\begin{align}
\int_{U_T}|g^i_3 -g_S|^2 dV + \sum_{k=0}^2\int_{U_T}|\partial_k g_3^i -\partial_k g_S|^2 dV\rightarrow 0 ,
\end{align}
 as $i \rightarrow \infty$ where $dV$ is the volume form on $U_T=\Sigma\times [0,T]$ with respect to $\delta$.

If $m_H^{\Hy}(\Sigma_{T}^i) \rightarrow 0$ as $i \rightarrow \infty$ and we consider \eqref{g3Def3}
 on $U_T^i$ then,
\begin{align}
\int_{U_T}|g^i_3 -g_{\Hy}|^2 dV + \sum_{k=0}^2\int_{U_T}|\partial_k g^i_3 -\partial_k g_{\Hy}|^2 dV\rightarrow 0,
\end{align}
as $i \rightarrow \infty$ where $dV$ is the volume form on $U_T=\Sigma\times [0,T]$ with respect to $\delta$.

Instead, if $m_H^{\Hy}(\Sigma_{T}^i)- m_H^{\Hy}(\Sigma_{0}^i) \rightarrow 0$ and $m_H^{\Hy}(\Sigma_t^i)\rightarrow m > 0$ and we consider \eqref{g3Def4} on $U_T^i$ then,
\begin{align}
\int_{U_T}|g^i_3 -g_{ADSS}|^2 dV + \sum_{k=0}^2\int_{U_T}|\partial_k g^i_3 -\partial_k g_{ADSS}|^2 dV\rightarrow 0,
\end{align}
as $i \rightarrow \infty$ where $dV$ is the volume form on $U_T=\Sigma\times [0,T]$ with respect to $\delta$.
\end{Thm}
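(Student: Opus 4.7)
The key observation is that in each of the four cases, the $dt^2$ coefficient of $g^i_3$ has been chosen by construction to be identical to the $dt^2$ coefficient of the corresponding prototype metric ($\delta$, $g_S$, $g_{\Hy}$, or $g_{ADSS}$). Consequently the $dt^2$ contributions to the difference $g^i_3 - \mathrm{prototype}$ and to every $\partial_k$ derivative of that difference vanish pointwise. The entire theorem therefore reduces to showing that the spatial part $e^{t-T} g^i(x,T)$ (respectively $e^t g^i(x,0)$ in the PMT Euclidean case) converges to $r_0^2 e^t \sigma(x)$ in $W^{1,2}(\Sigma \times [0,T], dV)$.

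First I would invoke Lemma \ref{C1alphaConvergence} to obtain $g^i(x,T) \to r_0^2 e^T \sigma(x)$ in $C^{1,\alpha}(\Sigma)$ for some $\alpha \in (0,1)$; the hypotheses \eqref{FundBound1}--\eqref{FundBound4} supply exactly what that lemma requires. Multiplying by the smooth factor $e^{t-T}$, which is bounded above and below on $[0,T]$, upgrades this to $e^{t-T} g^i(x,T) \to r_0^2 e^t \sigma(x)$ in $C^{1,\alpha}$, uniformly in $t \in [0,T]$.

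Next I would translate the $C^{1,\alpha}$ estimate into the desired $W^{1,2}$ estimate. The $L^2$ piece $\int_{U_T} |g^i_3 - \mathrm{prototype}|^2 \, dV$ is immediate from $C^0$ convergence of the spatial parts together with finiteness of the $\delta$-volume of $U_T$. For the spatial coordinate derivatives $\partial_k$ with $k=1,2$, convergence of $\partial_k g^i_3$ to $\partial_k \mathrm{prototype}$ in $L^2$ follows directly from the $C^{1,\alpha}$ convergence of the spatial factor combined with the bounded multiplier $e^{t-T}$. For the $\partial_t$ derivative of the spatial part one observes that $\partial_t(e^{t-T} g^i(x,T)) = e^{t-T} g^i(x,T)$ and $\partial_t(r_0^2 e^t \sigma) = r_0^2 e^t \sigma$, so this $t$-derivative difference equals the spatial difference itself, which again converges in $L^2$ by $C^0$.

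I do not expect any substantial obstacle at this final step, because $g^i_3$ has been engineered precisely so that its $dt^2$ warping coefficient coincides with the prototype's on the nose, leaving only the spatial metric comparison, which is handled cleanly by Lemma \ref{C1alphaConvergence}. The one mild bookkeeping item is applying that lemma at $t=0$ rather than at $t=T$ in the PMT Euclidean case: since the uniform $W^{2,2}$ bound on $A$, the pointwise bounds on $H$, and the curvature hypotheses \eqref{FundBound1}--\eqref{FundBound4} are all in force on every time slice in $[0,T]$, the proof of Lemma \ref{C1alphaConvergence} carries over verbatim with $T$ replaced by $0$.
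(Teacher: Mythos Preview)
Your proposal is correct and is essentially the paper's own argument spelled out in detail: the paper's entire proof is the single sentence ``This follows by Lemma~\ref{C1alphaConvergence},'' and your write-up explains exactly why that lemma suffices once the $dt^2$ parts have been matched by construction. The only caveat is your claim that hypothesis~\eqref{FundBound4} is ``in force on every time slice'': it is stated only on $\Sigma_T$, so the appearance of $g^i(x,0)$ in \eqref{g3Def1} is almost certainly a typo for $e^{t-T}g^i(x,T)$ (consistent with \eqref{g3Def2}--\eqref{g3Def4} and the computation of $\partial_0 g_3^i$ in the proof of Theorem~\ref{g2tog3}), rather than a case requiring a separate argument at $t=0$.
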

\begin{proof}
This follows by Lemma \ref{C1alphaConvergence}.
\end{proof}

\textbf{Proof of Main Theorems:}
\begin{proof}
The proof of the main theorems now follows by combining Theorem \ref{gtog1}, Theorem \ref{g1tog2}, Theorem \ref{g2tog3} and Theorem \ref{g3todelta} via the triangle inequality. Note that $W^{1,2}$ convergence implies $L^6$ convergence in dimension $3$ which is how we can conclude that the volume of regions are converging.
\end{proof}

\end{document}